\newcommand{\cyrrm}{\fontencoding{OT2}\selectfont\textcyrup}
\newtheorem{thm}{Theorem}[section]
\newtheorem{conj}{Conjecture}[section]
\newtheorem*{definition*}         {Definition}
\newtheorem{lemma}[thm]{Lemma}
\newtheorem{cor}[thm]{Corollary}
\newtheorem*{remark}{Remark}
\theoremstyle{remark}
\newcommand*{\Oo}{\mathbb{O}}
\newcommand*{\Q}{\mathbb{Q}}
\newcommand*{\Hh}{\mathbb{H}}
\newcommand*{\Z}{\mathbb{Z}}
\newcommand*{\G}{\mathbb{G}}
\newcommand*{\A}{\mathbb{A}}
\newcommand*{\R}{\mathbb{R}}
\newcommand*{\C}{\mathbb{C}}
\newcommand*{\F}{\mathbb{F}}
\newcommand*{\GQ}{\textrm{Gal}(\overline{\mathbb{\Q}}/\mathbb{Q})}
\newcommand*{\Disc}{\textrm{Disc}}
\newcommand*{\Gal}{\textrm{Gal}}
\author{Jacob Tsimerman}
\begin{document}
\title[Brauer-Siegel for Tori and Lower bounds for CM Galois orbits]{Brauer-Siegel for Arithmetic Tori and lower bounds for Galois orbits of special points}
\maketitle
\begin{abstract}
In \cite{S}, Shyr derived an analogue of Dirichlet's class number formula for arithmetic Tori. 
We use this formula to derive a Brauer-Siegel formula for Tori, relating the Discriminant of a torus to the product of its regulator and class number.
We apply this formula to derive asymptotics and lower bounds for Galois orbits of CM points in the Siegel modular variety $A_{g,1}$. Specifically, we ask that the 
sizes of these orbits grows like a power of Discriminant of the underlying endomorphism algebra. We prove this unconditionally in the case $g\leq 6$, and for all 
$g$ under the Generalized Riemann Hypothesis for CM fields. Along the way we derive a general transfer principle for torsion in ideal class groups of number
fields.
\end{abstract}

\section{Introduction}

Let $A_{g,1}$ be the coarse moduli space of principally polarized Abelian varieties. Recall that an Abelian variety $B$ of dimension $g$ is said to be CM if 
its endomorphism algebra $End(B)\otimes_{\Z}\Q$ contains a semi-simple, commutative algebra $R$ over $\Q$ with $[R:\Q]=2g$.
This paper is motivated by the following conjecture, first suggested by Edixhoven in \cite{EMO}:
	
\begin{conj}\label{lowerboundscm}

Fix an integer $g$. Let $B$ be a $g$-dimensional CM principally polarized Abelian variety, and let $x$ be the corresponding point in 
$A_{g,1}(\overline{\Q})$. Finally, let $Z(End(B))$ denote the center of the 
endomorphism ring of $B$. Then there exists a constant $\delta_g>0$ so that $$|\GQ\cdot x|\gg_g \Disc(Z(End(B)))^{\delta_g}.$$
\end{conj}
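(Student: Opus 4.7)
The plan is to translate Conjecture~\ref{lowerboundscm} into a statement about the class number of an associated arithmetic torus and then apply the Brauer--Siegel formula for tori developed in this paper. To the CM abelian variety $B$ one attaches its Mumford--Tate torus $T$, a $\Q$-torus that splits over the CM field $K$ sitting inside $\mathrm{End}(B)\otimes\Q$. By Shimura reciprocity, the action of $\GQ$ on the locus of principally polarized abelian varieties with the same CM data as $B$ factors through an action of a quotient of the class group of $T$; up to a factor bounded in terms of $g$, this gives
$$|\GQ\cdot x| \gg_g h(T),$$
where $h(T)=|T(\Q)\backslash T(\A_f)/K_T|$ is the class number of $T$ with respect to a maximal compact open subgroup $K_T\subset T(\A_f)$. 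Since $Z(\mathrm{End}(B))\subseteq K$ and $\Disc(K)$ dominates $\Disc(Z(\mathrm{End}(B)))$ (via the tower formula), the problem reduces to showing $h(T)\gg_g \Disc(T)^{\delta_g}$ for a suitable $\delta_g>0$.

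The second step is to invoke the Brauer--Siegel formula for tori that is the main analytic result of the paper, which gives
$$h(T)\,R(T) = \Disc(T)^{1/2+o(1)}$$
as $\Disc(T)\to\infty$. This depends on ruling out Siegel zeros for the Artin $L$-functions attached to the character lattice of $T$. Under GRH for CM fields these exceptional zeros are excluded, and the formula holds for all $g$. Unconditionally, Stark's method for handling Siegel zeros by descent to a subfield applies whenever the splitting field of $T$ has degree small compared with $\log\Disc(T)$; a combinatorial check on the possible types of Mumford--Tate tori shows that this is precisely the regime $g\leq 6$.

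To turn the product estimate into a lower bound for $h(T)$ alone, one needs a complementary upper bound of the shape $R(T)\ll \Disc(T)^{1/2-\delta_g-\epsilon}$; any power saving in the regulator is enough. The regulator of $T$ is the covolume of the unit lattice $T(\Z)/\mathrm{torsion}$, and ultimately reduces to regulators of units in $K$ and certain subfields. To save a power of $\Disc$ here I would deploy the general transfer principle for torsion in ideal class groups announced in the abstract: the units that would make the regulator large are controlled by torsion in class groups of smaller subfields, and the transfer principle bounds this torsion non-trivially.

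The main obstacle, and the real content of the paper, is this regulator upper bound. The trivial Minkowski-style volume bound only yields $R(T)\ll \Disc(T)^{1/2+o(1)}$, which renders the Brauer--Siegel lower bound on $h(T)$ vacuous, so a structural analysis of the units of $T$ delivering genuine power savings is required; the class-group transfer principle is the key new ingredient that makes this possible. The Siegel-zero problem is the secondary obstacle, and is exactly what produces the dichotomy between the unconditional result for $g\leq 6$ and the GRH-conditional result for arbitrary $g$.
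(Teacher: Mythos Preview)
Your proposal misidentifies the central obstacle. The relevant tori here are compact at infinity: the Mumford--Tate torus sits inside the norm-one torus $K_0=\ker(K^\times\to F^\times)$, and the paper notes explicitly that ``all the relevant tori for the study of CM points are compact at infinity, and thus have no regulator.'' So $R_T=1$, and the Brauer--Siegel formula for tori already gives $h_{T_M}=f_\rho^{1/2+o(1)}$ outright; there is no regulator to beat. Furthermore, that formula is established \emph{unconditionally} in Section~4 via Siegel's (ineffective) bound on $L(1,\chi)$ and Brauer induction; Siegel zeros are handled there once and for all, and they are \emph{not} what produces the $g\le 6$ versus GRH dichotomy.

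The real difficulty is that the Galois orbit is not the full class group of any torus: by Shimura reciprocity it is the \emph{image} of the reciprocity morphism $\widetilde{r_K}:Cl(L)\to Cl(K)$ (after the reduction in Section~7.2). Brauer--Siegel tells you the target class group is large, but says nothing about how much of it is hit. The paper attacks this in two independent ways. Under GRH (Section~7.3), one exhibits many small totally split primes and shows directly that their images under $r_K$ are distinct; this is where GRH enters, via effective Chebotarev, not via exceptional zeros. Unconditionally, the paper analyzes the cokernel of the dual map $\widehat{r_K}^*:X(L_0)^*\to X(K_0)^*$ on cocharacter lattices. Theorem~\ref{torusexact} says that if this image is a primitive sublattice then the induced map on class groups is surjective up to discriminant-negligible factors, and Lemma~\ref{bigimage} then finishes. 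For $g\le 6$ the paper performs a case-by-case combinatorial analysis of the signed-permutation action of $\Gal(L/\Q)\subset W_g$ to show that the image lattice is either primitive or has a small finite cokernel $B$; in the latter case the transfer principle (Corollary~\ref{transfer}) identifies $CL(B)$ with torsion in the class group of a proper subfield of $K$, which one then bounds crudely by its full class number. The transfer principle is thus a tool for controlling cokernels of maps between class groups of tori, not for bounding unit regulators.
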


	Edixhoven established the desired lower bound for hilbert modular surfaces in \cite{Ed}.
	
	Recently, Pila\cite{P} gave an unconditional proof of the Andre-Oort conjecture for an arbitrary product of modular curves.
A positive answer to question \ref{lowerboundscm} would be one of the main ingredients in generalizing Pila's recent 
work to arbitrary Shimura varieties. Our main theorem is as follows:

\begin{thm}\label{main}
For $g\leq 6$, conjecture \ref{lowerboundscm} holds. If one assumes the Generalized Riemann Hypothesis for CM fields, then conjecture \ref{lowerboundscm} holds for all $g\in\mathbb{N}$. 
\end{thm}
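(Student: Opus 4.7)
The plan is to convert the statement about Galois orbits into a statement about the class number of an arithmetic torus, then apply the paper's Brauer-Siegel formula together with bounds on the regulator.

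\textbf{Step 1 (Reduction to a class number).} By the main theorem of complex multiplication (Shimura--Taniyama reciprocity), the action of $\GQ$ on the CM point $x$ factors through an explicit map from the idele class group of the reflex of $(Z(\text{End}(B)), \Phi)$ to an arithmetic torus $T = T_{R,\Phi}$ depending on the center $R = Z(\text{End}(B))$ and the CM type $\Phi$. I would use this to show
\[
|\GQ \cdot x| \;\asymp_g\; h(T),
\]
where $h(T)$ is the class number of $T$ and the implicit constants depend only on indices bounded in terms of $g$.

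\textbf{Step 2 (Brauer--Siegel for $T$).} Feed $T$ into the Brauer--Siegel formula for tori derived in this paper, obtaining
\[
h(T) \cdot R(T) \;\gg\; \Disc(T)^{1/2 - o(1)},
\]
where $R(T)$ denotes the torus regulator. Thus a lower bound on $h(T)$ reduces to an upper bound on $R(T)$ in terms of a power of $\Disc(T)$ strictly less than $1/2$.

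\textbf{Step 3 (Regulator control).} The units of $T$ sit inside units of the CM algebra $R$; by the CM field structure their rank is bounded explicitly in terms of $g$. Under the Generalized Riemann Hypothesis for CM $L$-functions, standard Bach-type arguments give
\[
R(T) \;\ll_g\; (\log \Disc(T))^{O(g)},
\]
which immediately yields $h(T) \gg \Disc(T)^{1/2 - \epsilon}$ and handles all $g$. Unconditionally, for $g \leq 6$ I would combine elementary Minkowski-type estimates on fundamental units with the small rank of $\mathcal{O}_T^\times$ to obtain $R(T) \ll \Disc(T)^{1/2 - \delta_g}$; the threshold $g=6$ reflects the point at which polylogarithmic upper bounds on the regulator cease to be available by elementary means.

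\textbf{Step 4 (Discriminants and transfer of torsion).} Finally I would compare $\Disc(T)$ with $\Disc(R)$ via a conductor-discriminant computation for the character lattice of $T$, giving $\Disc(T) \gg \Disc(R)^{c_g}$ for some positive $c_g$. When $R$ is a product of CM fields rather than a single field, the transfer principle for class group torsion announced in the abstract is needed to pass class number information between $T$ and its simple factors. Combining Steps 1--4 yields the bound $|\GQ\cdot x| \gg \Disc(Z(\text{End}(B)))^{\delta_g}$.

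\textbf{Main obstacle.} The crux is Step 3: separating $h(T)$ from $R(T)$ in the Brauer--Siegel asymptotic requires genuinely bounding the regulator from above by a power of the discriminant strictly less than $1/2$. GRH makes this routine for all $g$, but unconditionally one has no general polylogarithmic bound on regulators of number fields, and this is precisely what forces the restriction $g \leq 6$ in the unconditional part of the theorem. Secondarily, the discriminant comparison between $T$ and $Z(\text{End}(B))$ in Step 4 must be done carefully to avoid losing too much in the exponent, and the transfer principle for class group torsion is essential for handling non-simple CM algebras.
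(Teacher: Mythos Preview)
Your proposal misidentifies the central obstruction. The torus $T$ attached to a CM point is anisotropic at infinity: $T(\R)$ is compact, so $R(T)=1$. There is no regulator to bound, and Step~3 as written is vacuous. Consequently the restriction $g\le 6$ has nothing to do with regulator estimates, and the GRH argument in the paper is not a Bach-type bound on $R(T)$ at all.

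The genuine difficulty is that the size of the Galois orbit is \emph{not} $h(T)$ but rather the size of the \emph{image} of the reciprocity map $\widetilde{r_K}:CL(L)\to CL(K)$ (or more precisely into $Cl_{MT_x}$). Brauer--Siegel for tori gives you the size of $Cl_{K_0}$, but to control the image you must bound the cokernel of $\widetilde{r_K}$. Via Theorem~\ref{torusexact}, this cokernel is governed by the finite Galois module $B=X(K_0)^*/\widehat{r_K}^*(X(L_0)^*)$. The paper's unconditional argument for $g\le 6$ is a case-by-case combinatorial analysis of this cokernel on cocharacter lattices, showing that $B$ is either trivial or small enough that $CL(B)$ can be bounded by class numbers of subfields of low degree via Corollary~\ref{transfer}. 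This combinatorics is what breaks down for larger $g$. Under GRH, the paper instead bypasses the cokernel entirely by using many totally split primes of size roughly $D_K^{1/g^2}$ to exhibit directly a large supply of distinct elements in the image of $\widetilde{r_K}$.

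So your Steps~1, 2, 4 are broadly in the right spirit, but Step~1 should produce an \emph{image of a class group map} rather than a full class number, and Step~3 must be replaced by a mechanism for bounding that image: combinatorics of the cocharacter cokernel unconditionally, split-prime arguments under GRH.
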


We mention that using similar methods, Ullmo and Yafev independently settled the case of $g=2,3$ \cite{UY2}.
Following Chai and Oort, we define a Weyl CM point to be a CM point $x$ corresponding to an abelian variety $B$ whose endomorphism ring $End(B)$ is an order
in a Weyl CM field. Since for $g=1$ all CM points are Weyl CM points, the following theorem  can be seen as a generalization of Brauer-Siegel's theorem
to all $g$:

\begin{thm}\label{Weyl}
Conjecture \ref{lowerboundscm} holds for all $g\in\mathbb{N}$ if one restricts to Weyl CM points $x$. Moreover, suppose $x$ corresponds to an abelian variety
$B$ whose endomorphism ring $End(B)$ is the ring of integers in a Weyl CM field $K$ with totally real subfield $F$. Then 
$$|\GQ\cdot x|=\left(\frac{\Disc K}{\Disc F}\right)^{\frac12 +o_g(1)}.$$
\end{thm}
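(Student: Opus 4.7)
The plan is to reduce $|\GQ\cdot x|$ to the class number of an arithmetic torus and then apply the Brauer--Siegel formula for tori proved earlier in the paper. By the main theorem of complex multiplication (Shimura--Taniyama reciprocity), for a CM point $x$ whose abelian variety $B$ satisfies $\text{End}(B)=\mathcal{O}_K$ with CM type $\Phi$, the Galois orbit $\GQ\cdot x$ is, up to an index bounded only in terms of $g$, a torsor under the class group $C(T)$ of the Mumford--Tate torus $T\subset \text{Res}_{K/\Q}\G_m$. The first task is therefore to identify $T$ in the Weyl setting.

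When $K$ is a Weyl CM field of degree $2g$, the Galois closure of $K$ has Galois group $(\Z/2)^g\rtimes S_g$ acting on the embeddings in the canonical way. This maximality lets one show by a cocharacter/Galois-module computation that $\mu_\Phi$ generates, as a $\GQ$-module and up to a central $\G_m$, the full character lattice of the norm-one subtorus
$$T \;=\; \ker\bigl(N_{K/F}\colon \text{Res}_{K/\Q}\G_m \to \text{Res}_{F/\Q}\G_m\bigr).$$
From the resulting short exact sequence $1\to T\to \text{Res}_{K/\Q}\G_m\to \text{Res}_{F/\Q}\G_m\to 1$ one reads off, in Shyr's framework, that the discriminant of $T$ is $\Disc(K)/\Disc(F)$ up to a factor bounded in $g$, while its integral points are $\mathcal{O}_K^*/\mathcal{O}_F^*$, which is finite because $K$ is CM; in particular the regulator $R(T)$ is $O_g(1)$.

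Feeding these invariants into Brauer--Siegel for tori then gives
$$h(T) \;=\; \bigl(\Disc(K)/\Disc(F)\bigr)^{1/2+o_g(1)},$$
which, combined with the reciprocity step above, yields the asymptotic equality stated in the theorem in the maximal-order case. To handle Weyl CM points whose endomorphism ring is only an order in $\mathcal{O}_K$, I would apply the transfer principle for torsion in ideal class groups advertised in the abstract; this controls the ratio of class numbers between an order and the maximal order by a power of the conductor, and so preserves the polynomial lower bound required by Conjecture \ref{lowerboundscm}.

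The step I expect to be the main obstacle is the precise identification of $T$ together with the matching computation of the Shyr discriminant: one must track not merely the character lattice but also the local factors appearing in Shyr's formula, and show that they aggregate to exactly $\Disc(K)/\Disc(F)$ rather than to some smaller quantity. The Weyl hypothesis is exactly what makes this possible, since it forces the reflex field to be essentially $K$ itself and hence guarantees that $T$ is the full norm-one torus rather than a proper subtorus whose discriminant would bear a more complicated relationship to $\Disc K$ and $\Disc F$.
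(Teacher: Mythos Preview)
Your overall strategy is the paper's: identify the relevant torus as $K_0 = \ker(N_{K/F})$, read off its quasi-discriminant as $\Disc K/\Disc F$ from the exact sequence $F^\times \to K^\times \to K_0$ and Corollary~\ref{discartin}, note that $K_0(\R)$ is compact so the regulator is trivial, and invoke Theorem~\ref{BST}. But your opening move has a real gap. Shimura--Taniyama does not hand you the Galois orbit as a torsor under $Cl(T)$; it hands you the orbit as (after the bounded-index reductions \eqref{Yinput}--\eqref{UYinput}) the \emph{image} of the reciprocity map $\widetilde{r_K}\colon Cl(L) \to Cl_{K_0}$. Getting from this image to all of $Cl_{K_0}$ is exactly where the Weyl hypothesis works, and it needs two ingredients you have run together. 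First, one must prove \emph{integral} surjectivity of $\widehat{r_K}^*\colon X(L_0)^* \to X(K_0)^*$; the paper does this by exhibiting, for each $i$, the signed permutation $g_i\in W_g$ that flips the sign of $i$ alone and checking that $1^*-g_i^*\mapsto \pi_i^*$. Second, one must invoke Theorem~\ref{torusexact} to convert an exact sequence of tori into near-surjectivity on class groups. You frame the cocharacter step as merely identifying the Mumford--Tate torus rationally, but the integral statement is the point: without it $r_K$ hits $K_0$ only up to isogeny, and you are back to bounding torsion in $Cl_{K_0}$ --- this is precisely the obstruction that makes the $g=4,5,6$ cases harder. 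Note also that the slack one picks up is discriminant-negligible, not literally bounded in $g$; harmless for the $o_g(1)$ conclusion, but not what you wrote.

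For non-maximal orders the transfer principle is the wrong tool: Corollary~\ref{transfer} compares cokernels of torus maps having isomorphic cokernel Galois modules, and says nothing about an order versus $\mathcal{O}_K$. The paper handles this case through Theorem~\ref{reduction}, whose engine is the local estimate Lemma~\ref{localestimate} bounding $[\mathcal{O}_K:R_x]$ by a power of the compact-open index $[MT_x(\hat{\Z}):K_x]$, which itself contributes directly to the orbit size via~\eqref{UYinput}.
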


	For $g=1$, conjecture \ref{lowerboundscm} is answered by the Brauer-Siegel theorem for Class groups of imaginary quadratic fields. 
For higher $g$, however, the question 
isn't just about understanding the sizes of class groups, but more about the sizes of {\it images of morphisms between class groups}. Thus, for higher $g$        
instead of class groups of number fields, it turns out we are forced to deal with class groups of arithmetic tori over $\Q$. To understand the size of the
class groups of Tori, we prove the following theorem in section 4:

\begin{thm}(Brauer-Siegel Theorem for Tori)\newline

Let $T$ be a torus of dimension $d$ with minimal splitting field $K$, and $[K:\Q]=n$. Let $h_T,R_T$ be the class number and regulator of $T$ respectively. Also, let $\rho$ be the associated integral representation of $\GQ$ and $\rho_{\Q}$ the induced rational representation. Let $f_{\rho}$ be the norm of the Artin conductor
corresponding to $\rho_{\Q}$. Then we have 
\begin{align*}
\frac12\ln(h_TR_T)&=\frac12 \ln f_{\rho}+ o_{n,d}(\ln D_K) \\
&=(\frac12 + o_{n,d}(1))\ln f_{\rho}\\
\end{align*}

\end{thm}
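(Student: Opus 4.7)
The plan is to reduce the identity to analytic estimates on the Artin $L$-function $L(s,\rho_\Q)$ and to bound that $L$-value above by convexity and below by an ineffective Brauer--Siegel--Stark argument. Starting from Shyr's class number formula in \cite{S}, which generalizes Dirichlet's formula to arithmetic tori, one expects an identity of the shape
$$h_T R_T \;=\; \sqrt{f_\rho}\cdot L^{*}(1,\rho_\Q)\cdot \kappa_T,$$
where $L^{*}(1,\cdot)$ denotes the leading non-zero Taylor coefficient at $s=1$ and $\kappa_T$ bundles together local Tamagawa factors and Galois cohomology contributions whose logarithm is $O_{n,d}(1)$. Taking logs reduces the first displayed equality of the theorem to the assertion
$$\ln |L^{*}(1,\rho_\Q)| \;=\; o_{n,d}(\ln D_K).$$

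For the upper bound I would run the standard Phragm\'en--Lindel\"of convexity argument on $L(s,\rho_\Q)$. The completed $L$-function satisfies a functional equation with conductor $f_\rho$ and archimedean $\Gamma$-factors of type controlled by $n$ and $d$; interpolating between the trivial absolute-convergence bound at $s=1+\varepsilon$ and the functional equation value at $s=-\varepsilon$ yields $L(1,\rho_\Q) \ll_{\varepsilon,n,d} f_\rho^\varepsilon$, hence $\ln L(1,\rho_\Q) = o(\ln D_K)$. To handle the pole at $s=1$ coming from the trivial Galois-invariant subspace of $\rho_\Q$ one peels it off and applies convexity to the remainder; the separated $\zeta$-factor contributes only polylogarithmic size to $L^{*}(1,\rho_\Q)$.

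The lower bound is the principal obstacle and is the actual Brauer--Siegel input. Using Brauer's induction theorem I would write $L(s,\rho_\Q)$ as a product $\prod_i L(s,\chi_i)^{m_i}$ of integer (possibly negative) powers of Hecke $L$-functions of intermediate subfields $F_i\subset K$. Siegel's ineffective theorem gives $L(1,\chi_i)^{\pm 1} \ll_{\varepsilon} f_{\chi_i}^\varepsilon$ for every non-trivial abelian $\chi_i$, and since both the number of factors and the exponents $|m_i|$ are bounded in terms of $\dim\rho_\Q \leq d$, the product still satisfies $|L^{*}(1,\rho_\Q)|^{\pm 1} \ll_\varepsilon f_\rho^\varepsilon \leq D_K^{O(\varepsilon)}$. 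The delicate point here is the virtual nature of Brauer's decomposition, which could in principle cause cancellation to inflate the error, but since we only need a bound of the form $f_\rho^{o(1)}$ rather than a power-saving, the two-sided control on each individual factor is enough.

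Finally, for the second equality $\tfrac12\ln f_\rho + o(\ln D_K) = (\tfrac12 + o(1))\ln f_\rho$ one must show $\ln f_\rho \gg_{n,d} \ln D_K$. Minimality of the splitting field $K$ forces $\rho$ to be a faithful representation of $\Gal(K/\Q)$, so every prime that ramifies in $K$ must ramify in $\rho$; a standard conductor--discriminant estimate exploiting this faithfulness then gives $\ln f_\rho \geq c(n,d) \ln D_K$. This transforms the additive error $o_{n,d}(\ln D_K)$ into the multiplicative error $o_{n,d}(1)\ln f_\rho$ and completes the proof.
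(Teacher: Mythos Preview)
Your treatment of the $L$-value and of the second equality is essentially the paper's own argument: Brauer induction plus Siegel handles $\ln|L^{*}(1,\rho_\Q)| = o_{n,d}(\ln D_K)$ (your separate convexity argument for the upper bound is unnecessary, since the Brauer--Siegel input already gives two-sided control on each abelian factor), and the faithfulness of $\rho$ forces $\ln f_\rho \gg_{n,d}\ln D_K$, turning the additive error into a multiplicative one.

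The genuine gap is your first step. Shyr's formula reads $h_T R_T = \rho_T\, w_T\, \tau_T\, |D_T|^{1/2}$, where $D_T$ is the \emph{quasi-discriminant}, defined through the local volumes $\prod_p L_p(1,\rho_\Q)\,\omega_p(T(\Z_p))$; the Artin conductor $f_\rho$ does not appear in it. You have simply written $\sqrt{f_\rho}$ in place of $\sqrt{D_T}$ and folded the discrepancy into $\kappa_T$ with the assertion $\ln\kappa_T = O_{n,d}(1)$. That assertion is in fact false: the ratio $D_T/f_\rho$ picks up a bounded contribution at every prime ramified in $K$, and the number of such primes grows with $D_K$, so one only gets $\ln(D_T/f_\rho) = o_{n,d}(\ln D_K)$, not $O_{n,d}(1)$. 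More to the point, even this weaker bound requires an argument you have not supplied. The paper proves it by first showing (via an isogeny $\lambda$ of degree $O_{n,d}(1)$ and a prime-by-prime bound on $|\mathrm{coker}\,\lambda_p^c|$) that $D_T$ is invariant under isogeny up to discriminant-negligible factors, and then invoking Ono's structure theorem that $T^m$ times a product of Weil restrictions $R_{L_i/\Q}\G_m$ is isogenous to another such product; since $D_T = f_\rho = D_{L_i}$ holds on the nose for Weil restrictions, the comparison follows. Without this step your ``expected identity'' is unjustified, and the proof is incomplete.
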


Note that all the relevant tori for the study of CM points are compact at infinity, and thus have no regulator. In this case, theorem \ref{BST} tells us precisely
how large the class group is.

The general setting we study in section 5 is the following: We are given tori $T$ and $S$ over $\Q$, and a map $$\phi:T\rightarrow S$$ between them. 
This induces a map of class groups
$$\tilde{\phi}:Cl_T\rightarrow CL_S$$ and our goal is to understand the size of the image of $\tilde{\phi}$, or more precisely to get a lower bound for it.
One way to do this is to try and bound torsion in class groups of Tori: suppose we knew that $\phi$ had an inverse up to isogeny, so that there is 
some map $\psi:S\rightarrow T$ with $\phi\circ\psi$ is the $\times n$ map on $T$. Then it follows from this that the image of 
$\tilde{\phi}$ is at least the image of $\times n$ map on $Cl_S$, and so it has size at least $|Cl_S|/|Cl_S[n]|$. Now if we could just give a good bound for 	
$|Cl_S[n]|$ we would be done. Unfortunately, bounding torsion in class groups is a very hard problem in general. We take a more precise approach in section 5,
which describes the size of the cokernel of $\tilde{\phi}$ in terms of the cokernel of the map on characters $X(S)\rightarrow X(T)$ viewed as a Galois
representation. A nice corollary of this approach is that it allows us to derive new transfer principles between torsion in class groups of number fields, as is
explained in section 6. 
	
	The structure of this paper is as follows. In sections 2 and 3 we give background about arithmetic tori, and recall Shyr's formula,
which is the analogue for arithmetic Tori of Dirichlet's class number formula for number fields. In section 4 we use Shyr's formula to prove the Brauer-Siegel
Theorem for tori \ref{BST}. Section 5 is the core of the paper, and it proves a structure theorem for maps between class groups of Tori. In section 6 we 
explain how one can use the results in Section 5 to derive transfer principles for torsion in ideal class groups of number fields, 
and give a new application relating 2-torsion
in quartic fields to 2-torsion in their cubic resolvents. Finally, section 7 gives the promised applications to lower bounds of Galois orbits of CM abelian 
varieties. 

\section{Preliminaries}

A Torus of dimension $d$ over a field $k$ is an algebraic group $T$ over $k$ such that $T_{\bar{k}}$ 
is isomorphic to a power of the multiplicative algebraic group $\G_m$,so that  $T_{\bar{k}}\approx {\G_m^d}_{/\bar{k}}$. 

Given a torus $T$ of dimension $d$, we define its character group $X(T)$ to be $Hom_{\bar{k}}(T,\G_m)$. 
Note that $X(T)$ is a free abelian group on $d$ generators.
$X(T)$ also acquires an action of the absolute Galois group $G_k:=\textrm{Gal}(\bar{k}/k)$ via $\chi^{\sigma}(t)=\chi(t^{\sigma^{-1}})^{\sigma}$.
The functor sending $T$ to $X(T)$ is a contravariant category equivalence between Tori and torsion-free,finitely generated $G_k$-modules.

We say that a torus $T$ splits over $K$ if it is isomorphic to a power of $\G_m$ over $K$, or 
equivalently that $X(T)$ is a trivial module for the subgroup $\Gal(\bar{k}/K)$. 
We define $C(K/k)$ to be the set of all Tori over $k$ which are split over $K$. Note that 
there is a minimal extension $K$ of $k$ over which $T$ becomes split, and $K$ is Galois over $k$.
For each extension $L$ of $k$, we define $X(T)_L$ to be the subgroup of $X(T)$ fixed by $\Gal(\bar{k}/(L\cap\bar{k}))$. If $v$ is a place of $k$, we sometimes
write $X(T)_v$ for $X(T)_{k_v}$.

We shall require the following basic statement from the theory of integral representations of finite groups:

\begin{thm}\label{basicequality}

Let $G$ be a finite group and fix a positive integer $d$. Then there are only $O_{d}(1)$ integral representations of $G$ of dimensions $d$ up to isomorphism.

\end{thm}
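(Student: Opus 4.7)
The plan is to interpret an integral representation up to isomorphism via the $GL_d(\Z)$-conjugacy class of its image, and then show that $GL_d(\Z)$ has only $O_d(1)$ conjugacy classes of finite subgroups. For any integral representation $\rho: G \to GL_d(\Z)$, the image $\rho(G)$ is a finite subgroup, and under the natural equivalence relation induced by change of basis in $\Z^d$, the representation is controlled by this image up to conjugation. So the result follows once we bound the number of conjugacy classes of finite subgroups of $GL_d(\Z)$. This rests on two classical inputs: Minkowski's bound on the order of any finite subgroup of $GL_d(\Z)$, and the Jordan--Zassenhaus theorem controlling $\Z$-integral structures on a fixed rational representation of a fixed finite group.

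For the order bound, for any odd prime $p$ the reduction map $GL_d(\Z) \to GL_d(\F_p)$ is injective on torsion: a matrix $I + pA$ with $A \in M_d(\Z) \setminus \{0\}$ satisfies $(I + pA)^n \equiv I + npA \pmod{p^2}$, so it has infinite order unless $A = 0$. Taking $p = 3$, any finite subgroup of $GL_d(\Z)$ embeds in $GL_d(\F_3)$ and hence has order at most $M(d) := |GL_d(\F_3)|$, an explicit function of $d$ alone.

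Next, there are only finitely many isomorphism classes of abstract finite groups $H_1, \ldots, H_N$ of order at most $M(d)$, with $N = N(d)$ depending only on $d$. For each $H_i$, Jordan--Zassenhaus applied to the group ring $\Z[H_i]$ asserts that there are only finitely many isomorphism classes of $\Z[H_i]$-lattices of $\Z$-rank $d$, equivalently only finitely many $GL_d(\Z)$-conjugacy classes of embeddings $H_i \hookrightarrow GL_d(\Z)$. Writing $B_i$ for this finite bound, the total number of conjugacy classes of finite subgroups of $GL_d(\Z)$ is at most $\sum_{i=1}^N B_i$, a quantity depending only on $d$, which gives the required $O_d(1)$ bound.

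The main technical obstacle is the Jordan--Zassenhaus step, whose standard proof reduces $\Z[H_i]$-lattices in a fixed $\Q[H_i]$-module to ideal classes in maximal orders of $\Q[H_i]$, all of which have finite class group. What makes the final bound depend only on $d$ — and not on $G$ — is that although each $B_i$ depends on $H_i$, we take the sum over a fixed finite list of groups $H_i$ whose length $N(d)$ depends only on $d$; this is what absorbs all dependence on the ambient group $G$ into the implied constant and produces a uniform bound.
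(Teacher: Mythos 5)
Your proof is correct in substance but takes a genuinely different route from the paper. The paper's argument is a self-contained lattice-reduction argument: it constructs a $G$-invariant positive-definite form on $L$ whose nonzero vectors of smallest norm generate $L$, picks a basis of such vectors, bounds the index of the sublattice $\tilde L$ they span by comparing covolumes against a sphere-packing estimate, and deduces that the matrix entries of every $\rho(g)$ in this basis lie in the fixed finite set $\Z[1/N]\cap[-N,N]$ with $N=(2^d)!$. Your argument instead invokes two classical black boxes: Minkowski's observation that reduction modulo an odd prime is injective on torsion in $GL_d(\Z)$ (so every finite subgroup has order at most $|GL_d(\F_3)|$), and the Jordan--Zassenhaus theorem, which, for each of the finitely many candidate abstract groups $H$ of that bounded order, bounds the number of isomorphism classes of $\Z[H]$-lattices of rank $d$. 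The paper's proof is elementary and effective in an obvious way; yours is shorter and conceptually cleaner but rests on Jordan--Zassenhaus, whose standard proof ultimately appeals to finiteness of class numbers of orders in semisimple $\Q$-algebras.

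One remark on precision, which applies equally to your argument and to the paper's: the step where you pass from counting representations to counting conjugacy classes of finite subgroups of $GL_d(\Z)$ implicitly identifies a representation with its image, i.e., restricts to faithful representations, or equivalently counts isomorphism classes of pairs $(G,L)$ with $L$ a faithful integral $G$-module. Read completely literally with $G$ fixed and $|G|$ unbounded in terms of $d$, the statement fails --- already $G=(\Z/2\Z)^n$ and $d=1$ gives $2^n$ pairwise nonisomorphic one-dimensional integral representations --- and the paper's own proof, which only bounds the set of possible matrices $\rho(g)$ and hence the image $\rho(G)$, has the same limitation. What both arguments actually establish, and what is used downstream (where $|\Gal(K/\Q)|\leq n$ is also bounded), is that there are $O_d(1)$ conjugacy classes of finite subgroups of $GL_d(\Z)$, equivalently $O_d(1)$ isomorphism types of pairs $(G,M)$ with $M$ faithful of rank $d$; your reduction is exactly aligned with this intended reading.
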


\begin{proof}

Suppose $L$ is a $d$-dimensional integral representation of $G$. We wish to 
find a $G$-invariant positive-definite inner product on $L$ such that the non-zero vectors in $L$ of smallest norm generate $L$.
To show this is possible, fix a $G$-invariant positive-definite inner product $\langle,\rangle_1$ on $L$, and consider the span $M\subset L$ of the vectors
of smallest norm. If $M$ is not all of $L$, then we can consider the dual $N$ of $M$ with respect to $\langle,\rangle_1$. Both $M$ and $N$ are $G$-invariant, so
we can find a $G$-invariant inner product $\langle,\rangle_2$ which is trivial on $M$ and positive-definite on $N$. By considering 
$x\langle,\rangle_1+\langle,\rangle_2$ for an appropriate $x>0$, we can modify our inner product so as to strictly increase the set of non-zero vectors of
smallest norm and also their span. 

Thus we can find an inner product $\langle,\rangle$ on $L$ which is $G$-invariant and a basis $v_1,\dots,v_n$ consisting of 
elements of $L$, such that 
\begin{equation}\label{minnorm}
\forall i, \langle v_i,v_i\rangle=y
\end{equation}
 for some $y>0$ and the norm of any other non-zero vector in $L$
is at least $y$. Let $\tilde{L}$ be the span of the $v_i$ (note that $\tilde{L}$ might not be $G$-invariant). 

By \eqref{minnorm} the sphere of radius $y/2$ is contained in a fundamental domain for $L$, and the covolume of $\tilde{L}$ is at most $y^d$. We thus have that
the index $[L:\tilde{L}]$, which is equal to the ratio of the covolumes, is at most $\frac{2^d}{Vol(S^d)}$. Setting 
$N=(2^d)!$ we see that $N\cdot L\subset\tilde{L}$. Now consider the matrix representation for an element $g\in G$ w.r.t the basis for $v_1,\cdots v_n$.

 Set $gv_j=\sum_i a_{ij}v_i$. Since $N\cdot L\subset\tilde{L}$ we have $a_{ij}\in\Z[\frac{1}{N}]$. Since the norm of $gv$ is also $y$, by considering the basis 
$v_1,\dots,v_{i-1},gv,v_{i+1},\dots,v_n$ and considering covolumes we conclude as above that $|a_{ij}|\leq N$. Thus there are only $O_d(1)$ options for the
representation of $G$ on $\tilde{L}$. Since there are only $O_d(1)$ options for $L$ given $\tilde{L}$, this completes the proof.

\end{proof}

In what follows, we write $\A$ for the Adele group, and $\A_f$ for the finite Adeles. likewise, for a number field $K$ we write $\A^K$ and $\A^K_f$ for the 
Adeles (resp. finite Adeles) over $K$.

\section{Invariants of Arithmetic Tori}

In this section we define several invariants of arithmetic tori and state Shyrs analogue of the class number formula. 
From now on we restrict to tori defined over $\Q$.
A useful example to keep in mind is the following: Given a number field $L$, the multiplicative group $L^{\times}$ can be thought of as a torus over $\Q$
given by restricting the multiplicative group $\G_m$ from $L$ to $\Q$. The corresponding character group $X(T)$ is the free $\Z$-module generated by the
complex embeddings $\psi_i:L\rightarrow\C$ with the natural Galois action. We denote this torus by $Res_{L/\Q}\G_m$.

\subsection{Class number $h_T$:}$\newline$

Define $T(\Z_p)$ to be the maximal compact subgroup of $T(\Q_p)$, and define $T(\R)^c$ to be the maximal compact subgroup of $T(\R)$. It follows 
that $$T(\hat{\Z}):=\prod_{p}T(\Z_p)$$ is the maximal compact subgroup of $T(\A_f)$, and $T(\A)^c:=T(\hat{\Z})\times T(\R)^c$ 
is the maximal compact subgroup of $T(\A)$.

 Define the class group of $T$ to be $$CL_T:=T(\Q)\backslash T(\A_f)\slash T(\hat{\Z}).$$  The class number of $T$ is defined to be $$h_T:= |CL_T|.$$ 
 Note that for $T=Res_{L/\Q}\G_m$, the class number $h_T$ is the ordinary class number of $L$.
 
\subsection{Units $w_T$ and Regulator $R_T$:}$\newline$

Consider a basis of $X(T)_{\infty}$ given by $\xi_1,\dots,\xi_{r_\infty}$ such that $\xi_1,\dots,\xi_r$ is a basis of $X(T)_{\Q}$. That such a basis can be chosen
follows from the fact that $X(T)_{\Q}$ is a primitive sublattice of $X(T)$, being the fixed set of a group action.
Now, the unit group $T(\Q)\cap T(\hat{\Z})$ has the decomposition $$T(\Q)^{tor}\times E,$$ where $E$ is a free abelian group of rank $r_{\infty}-r$ 
and $T(\Q)^{tor}=T(\Q)\cap T(\A)^c$ is finite, since its both compact and discrete in $T(\A)$. Pick a basis $e_i$ for $E$.
 Define the number $w_T$ and the regulator 
$R_T$ as follows:

$$w_T = \left|T(\Q)\cap T(\A)^c\right|,$$ and $$R_T = |\det(\log|\xi_{r+i}(e_j)|)|,1\leq i,j\leq r_{\infty}-r.$$

Note that for $T=Res_{L/\Q}\G_m$, $w_T$ is the number of roots of unity in $L$, while $R_T$ is the regulator of $L$.

\subsection{Quasi-residue $\rho_T$:}$\newline$

Let $\rho:\GQ\rightarrow Gl_d(\Z)$ be the representation corresponding to $T$, and let $\rho_{\Q}$ be the induced rational representation. We can then consider
the Artin L-function $L(s,\rho_{\Q})$. Recall that $r$ is the rank of $X(T)_{\Q}$ and define the quasi-residue $\rho_T$ of $T$ to be

$$\rho_T:= \lim_{s\rightarrow 1} (s-1)^r L(s,\rho_{\Q}).$$

Note that for $T=Res_{L/\Q}\G_m$, $\rho_T$ is the residue at $s=1$ of the Dedekind-Zeta function $\zeta_L(s)$.

\subsection{Tamagawa number $\tau_T$:}$\newline$

Fix an invariant gauge form $\omega$ on $T$ defined over $\Q$. Then $\omega$ induces canonically a Haar measure $\omega_v$ on $T(\Q_v)$ for each $v$. 
Moreover, for almost all finite primes $p$ we have $L_p(1,\rho_{\Q})\omega_p(T(\Z_p))=1$, and so $\omega_{\infty}\prod_{p}L_p(1,\rho_{\Q})\omega_p(T(\Z_p))$ defines a Haar measure
$\omega_T$ on $T(\A)$, which is moreover independent of the original choice of $\omega$ by the product formula.
	
Define $T(\A)^1$ to be the kernel of the map $\Lambda: T(\A)\rightarrow (\R_{+}^{\times})^r$ given by $$\Lambda(x)=(||\xi_i(x)||_{\Q})_{1\leq i\leq r}$$ where
$||\cdot||_{\Q}$ denotes the Idelle volume of $\Q$. Pulling back the measure $\Lambda_{i=1}^r t_i^{-1}dt_i$ gives us a measure $d\tilde{t}$ on $T(\A)/T(\A)^1$. 
Let $m$ be the measure on $T(\A)^1$ such that $m$ and $d\tilde{t}$ glue together to give $\rho_T^{-1}\omega_T$. We define the Tamagawa number $\tau_T$ to be
the measure of $T(\A)^1/T(\Q)$ under $m$. 

For $T=Res_{L/\Q}\G_m$, the Tamagawa number is $1$. 

\subsection{Quasi-Discriminant $D_T$:}$\newline$

Assigning $T(\R)^c$ measure 1 and gluing it with the measure $\Lambda_{i=1}^{r_{\infty}} d\xi_i$ on $T(\R)/T(\R)^c$ we get a measure on $T(\R)$ which we 
also denote by $d\tilde{t}$. Now, define $$c_T = \frac{w_{\infty}}{d\tilde{t}}\prod_{p}L_p(1,\rho_{\Q})\omega_p(T(\Z_p)).$$ The quasi-discriminant $D_T$ is defined to 
be $D_T=1/c_T^2$.

For $T=Res_{L/\Q}\G_m$ the quasi-discriminant $D_T$ is the ordinary discriminant $D_L$.

\subsection{Analytic Class number Formula for Tori:}
$\newline$

We have the following result of Shyr \cite{S}:

\begin{thm}(Shyr)\label{shyr}

The following equality holds: $$h_T\cdot R_T=\rho_T \cdot w_T\cdot\tau_T\cdot|D_T|^{1/2}.$$ 

\end{thm}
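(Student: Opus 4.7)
The strategy is to compute the Tamagawa volume of $T(\A)^1/T(\Q)$ in two ways, in direct analogy with the classical derivation of Dirichlet's class number formula. By construction, this volume equals $\tau_T$ with respect to the measure $m$ inherited from $\omega_T$. The plan is to compute the same volume directly by choosing an explicit fundamental domain and watching $h_T$, $R_T$, $w_T$, and $|D_T|^{1/2}$ emerge from its pieces.

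First I would decompose the space using the class group: choose representatives $x_1,\dots,x_{h_T}\in T(\A_f)$ for $T(\Q)\backslash T(\A_f)/T(\hat{\Z})$. After translating by the $x_i$, every element of $T(\A)^1/T(\Q)$ admits a representative of the form $(u,t_\infty)$ with $u\in T(\hat{\Z})$ and $t_\infty\in T(\R)$ constrained by the condition $\Lambda(x_i u t_\infty)=1$. The residual action is by the discrete group $T(\Q)\cap T(\hat{\Z})\cdot T(\R)$, which by the structure statement in section 3.2 decomposes as $T(\Q)^{tor}\times E$. Thus the volume splits as a sum of $h_T$ terms, each a product of a finite-places contribution, an Archimedean contribution, and a factor $1/w_T$ accounting for the torsion units.

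Next I would identify each piece. The finite-places factor $\prod_p \omega_p(T(\Z_p))$ is precisely what appears in the definition of $\omega_T$ after stripping off the convergence factors $L_p(1,\rho_\Q)$; those factors reassemble, via the Euler product $\prod_p L_p(1,\rho_\Q) = L(1,\rho_\Q)/L_\infty(1,\rho_\Q)$, into $\rho_T$ up to the Archimedean $L$-factor. The Archimedean contribution is the covolume of the lattice generated by $E$ acting on $T(\R)\cap T(\A)^1$ via the coordinates $(\log|\xi_{r+i}|)$, which is $R_T$ by definition. The conversion between $\omega_\infty$ on $T(\R)$ and the measure $d\tilde t$ on $T(\R)/T(\R)^c$ introduces the normalization constant $c_T$ of section 3.5, and by design $c_T^{-2}=|D_T|$ exactly absorbs the mismatch.

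Piecing everything together yields $\rho_T\cdot\tau_T = h_T R_T/(w_T\cdot |D_T|^{1/2})$, which rearranges to the stated identity. The main obstacle is not a single deep step but the delicate bookkeeping across so many normalizations: the Haar measure on the maximal compacts, the logarithmic coordinates on $T(\R)/T(\R)^c$, the regulated product over finite primes, and the passage between $T(\A)$, $T(\A)^1$, and the double-coset quotient all need to be threaded consistently. The definitions of $D_T$ and $c_T$ are engineered precisely to absorb the boundary terms in this computation, so the proof ultimately reduces to verifying that the pieces fit together exactly as intended.
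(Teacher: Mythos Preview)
The paper does not prove this theorem at all: it is stated as a result of Shyr with a citation to \cite{S}, and no argument is given beyond the remark that for $T=Res_{L/\Q}\G_m$ it specializes to Dirichlet's class number formula. So there is no proof in the paper to compare your proposal against.

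That said, your outline is a faithful sketch of how Shyr's original argument in \cite{S} proceeds: one computes the volume of $T(\A)^1/T(\Q)$ under the Tamagawa measure by choosing a fundamental domain built from class group representatives, the unit lattice $E$, and the maximal compacts, and then tracks how each of $h_T$, $R_T$, $w_T$, $\rho_T$, and $c_T=|D_T|^{-1/2}$ arises from the respective pieces. You have correctly identified that the definitions of $c_T$ and $D_T$ are rigged precisely so that the Archimedean normalization constants cancel, and that the bookkeeping of measures is the only real content. As a proof sketch this is accurate; as a proof it would of course require carrying out that bookkeeping explicitly, which is what Shyr's paper does.
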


For $T=Res_{L/\Q}\G_m$, this reduces to Dirichlet's class number formula for $L$.

\section{Brauer-Siegel formula for Tori}
Define $K$ to be the splitting field of $T$, and let $D_K$ be its discriminant. Set $d$ to be the dimension of $T$ and $n$ to be $[K:\Q]$.
In this section we use Shyr's class number formula to derive a Brauer-Siegel theorem for Tori by estimating the sizes of the invariants $w_T,\rho_T,\tau_T$ and 
$D_T$ with respect to $D_K$. We show that except for $D_T$, the invariants are
all small. To make this precise, we find it useful also to introduce the following notation:  A function $F(T,K)$ is said to be \emph{discriminant negligible} if 
$\forall\epsilon>0$, we have $$D_K^{-\epsilon}\ll_{n,d,\epsilon} |F(K,T)|\ll_{n,d,\epsilon} D_K^{\epsilon}.$$

\begin{lemma}
$\rho_T$ is Discriminant negligible.
\end{lemma}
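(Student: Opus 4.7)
\medskip

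The plan is to reduce $\rho_T$ to a product of Hecke $L$-values at $s=1$ with bounded multiplicities, and then apply classical (convexity) upper bounds and Siegel-type lower bounds.

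First I would decompose the rational representation $\rho_\Q$ into irreducible constituents of $\Gal(K/\Q)$. Writing $\rho_\Q=\bigoplus_\chi \chi^{\oplus m_\chi}$, the trivial character appears with multiplicity exactly $r=\dim X(T)_\Q$. Since $\dim\rho_\Q=d$, the number of constituents and the multiplicities $m_\chi$ are $O_d(1)$. The factorization
$$L(s,\rho_\Q)=\prod_\chi L(s,\chi)^{m_\chi}$$
then gives
$$\rho_T=\prod_{\chi\neq 1}L(1,\chi)^{m_\chi},$$
a product of at most $O_d(1)$ Artin $L$-values at $s=1$, each raised to a bounded power. So it suffices to bound each $L(1,\chi)$ from above and below by $D_K^{\pm\epsilon}$ for every non-trivial irreducible character $\chi$ of $\Gal(K/\Q)$.

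Next, for each such $\chi$ I would invoke Brauer's induction theorem to write
$$L(s,\chi)=\prod_j L(s,\psi_j)^{a_j},$$
where the $\psi_j$ are one-dimensional characters of subgroups $H_j\leq\Gal(K/\Q)$ (equivalently, Hecke characters of the fixed fields $K^{H_j}$), the exponents $a_j\in\Z$ are bounded in terms of $n$, and the number of factors is $O_n(1)$. The conductors of the $\psi_j$ satisfy $f_{\psi_j}\ll_{n,d} D_K^{c}$ for some $c=c(n,d)$, by the conductor-discriminant formula.

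Finally I would bound each $L(1,\psi_j)$. For the upper bound, a standard convexity/partial-summation argument for Hecke $L$-functions gives $L(1,\psi_j)\ll_{n,d}(\log f_{\psi_j})^{O(1)}\ll_{n,d,\epsilon} D_K^{\epsilon}$. For the lower bound, Siegel's theorem (in its Brauer-Landau extension to Hecke $L$-functions of number fields of bounded degree) yields $L(1,\psi_j)\gg_{n,d,\epsilon} f_{\psi_j}^{-\epsilon}\gg_{n,d,\epsilon} D_K^{-\epsilon}$, albeit ineffectively. Multiplying these $O_d(1)$ bounds with exponents $a_jm_\chi=O_{n,d}(1)$ gives $D_K^{-\epsilon}\ll_{n,d,\epsilon}\rho_T\ll_{n,d,\epsilon}D_K^{\epsilon}$ as required.

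The main obstacle is the lower bound, which forces us to rely on Siegel's ineffective estimate; the upper bound and the Brauer-decomposition bookkeeping are routine. The ineffectivity is harmless here because the conclusion is only that $\rho_T$ is discriminant negligible.
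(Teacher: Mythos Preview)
Your proof is correct and follows essentially the same approach as the paper: use Brauer's induction theorem to express $L(s,\rho_\Q)$ as a quotient of products of abelian (Hecke) $L$-functions attached to subfields of $K$, then apply Siegel's (ineffective) bounds on these at $s=1$. Your version is more explicit about the bookkeeping---separating out the trivial character to account for the pole, bounding multiplicities and conductors in terms of $n,d$---but the core argument is identical to the paper's.
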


\begin{remark}

This is the only part of the paper which is ineffective. That is, we cannot write down a $C_{\epsilon,n,d}>0$ such that 
$$\frac{1}{C_{\epsilon,n,d}}\cdot D_K^{-\epsilon} \leq |F(K,T)| \leq C_{\epsilon,n,d}\cdot D_K^{\epsilon}.$$ This stems from Siegel's ineffective lower bound
for $L(1,\chi)$ where $\chi$ is a quadratic dirichlet character, and is the reason why the Bruaer-Siegel theorem \ref{BST} and our lower bounds for Galois
orbits of CM points are all ineffective. 

\end{remark}

\begin{proof}

Let $L$ be a subfield of $K$ such that $\Gal(K/L)$ is Galois, and $\chi$ be a character of $\Gal(L/\Q)$. Then the estimate
$$D_K^{-\epsilon}\ll_{\epsilon} L(\chi,1)\ll_{\epsilon} D_K^{\epsilon}$$ is known by Siegel's work. By Brauer's induction theorem $L(\rho_{\Q},1)$ can be 
expressed as a quotient of products of such L-functions, and the result follows.

\end{proof}

\begin{lemma}

$\tau_T = O_{n,d}(1)$, and $1/\tau_T=O_{n,d}(1)$. Therefore $\tau_T$ is Discriminant negligible.

\end{lemma}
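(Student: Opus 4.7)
The plan is to invoke Ono's formula for the Tamagawa number of an algebraic torus, which expresses $\tau_T$ as a purely cohomological invariant of the Galois module $X(T)$, and then to appeal to Theorem \ref{basicequality} to bound the number of such modules that can arise.

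First I would recall Ono's theorem: for a torus $T$ over $\Q$ with splitting field $K/\Q$ and Galois group $G=\Gal(K/\Q)$, one has
$$\tau_T = \frac{|H^1(G, X(T))|}{|\mathrm{Sha}^1(\Q, T)|},$$
where $\mathrm{Sha}^1(\Q,T)=\ker\bigl(H^1(\Q,T)\to\prod_v H^1(\Q_v,T)\bigr)$ is the Tate--Shafarevich group of $T$, whose finiteness is itself classical (and due to Ono). The numerator is manifestly a function of the pair $(G,X(T))$ alone. For the denominator, Poitou--Tate duality identifies $\mathrm{Sha}^1(\Q,T)$ with the subgroup of $H^2(G,X(T))$ consisting of classes whose restriction to every cyclic subgroup of $G$ is trivial --- using the fact that, by Chebotarev density, every cyclic subgroup of $G$ arises as the decomposition group of some unramified prime --- so $|\mathrm{Sha}^1(\Q,T)|$ also depends only on the $G$-module $X(T)$.

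Next I would use that $|G|=n$ admits only $O_n(1)$ abstract isomorphism classes of groups, and for each such $G$, Theorem \ref{basicequality} provides only $O_d(1)$ integral representations of dimension $d$. Hence the pair $(G,X(T))$ ranges over only $O_{n,d}(1)$ isomorphism classes as $T$ varies over tori of dimension $d$ with splitting field of degree $n$, and consequently $\tau_T$ takes only $O_{n,d}(1)$ distinct values. In particular both $\tau_T$ and $1/\tau_T$ are $O_{n,d}(1)$, which is much stronger than (and immediately implies) discriminant negligibility.

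The main point of delicacy is verifying that $|\mathrm{Sha}^1(\Q,T)|$ depends only on the abstract Galois module $X(T)$, since the Tate--Shafarevich group is a priori defined via local cohomology at all places of $\Q$, including ramified ones whose decomposition subgroups are not predictable from $(G,X(T))$ alone. The Poitou--Tate duality combined with the Chebotarev realization of decomposition subgroups is the technical crux; once it is in hand, the remainder of the argument is essentially a pigeonhole application of Theorem \ref{basicequality}.
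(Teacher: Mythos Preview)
Your argument follows the same line as the paper's: invoke Ono's formula, bound both $|H^1(G,X(T))|$ and $|\mathrm{Sha}(T)|$ by cohomological invariants of the pair $(G,X(T))$, and then appeal to Theorem~\ref{basicequality} to conclude that only $O_{n,d}(1)$ such pairs occur. (Your orientation of Ono's formula, with $H^1$ in the numerator, is the standard one; the paper writes the reciprocal, which is immaterial here.) The paper handles $\mathrm{Sha}(T)$ more bluntly, simply citing from Platonov--Rapinchuk that it is a quotient of $H^2(G,X(T))$, hence of order at most $|H^2(G,X(T))|$.

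One point deserves flagging. You assert that $\mathrm{Sha}^1(\Q,T)$ is identified (via Poitou--Tate) with the subgroup of $H^2(G,X(T))$ consisting of classes trivial on every cyclic subgroup, and conclude that $|\mathrm{Sha}|$ depends \emph{only} on $(G,X(T))$. Your Chebotarev argument establishes the inclusion $\mathrm{Sha}^2(\Q,X(T))\subseteq\{\text{classes vanishing on all cyclic subgroups}\}$, since every cyclic subgroup of $G$ is realized as the decomposition group at some unramified prime. But the reverse inclusion is not addressed: at a ramified prime the decomposition group $G_v\subseteq G$ may well be non-cyclic, and the local condition ``restricts to zero on $G_v$'' is not in general implied by vanishing on all cyclic subgroups. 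So the ramified places can impose genuinely additional constraints, and your stronger conclusion that $\tau_T$ takes only finitely many values is not justified as written. This does no real damage, however: the inclusion you \emph{do} establish already yields $|\mathrm{Sha}(T)|=|\mathrm{Sha}^2(\Q,X(T))|\le |H^2(G,X(T))|$, and combined with the trivial bound $|\mathrm{Sha}(T)|\ge 1$ this shows both $\tau_T$ and $1/\tau_T$ are bounded by quantities depending only on $(G,X(T))$ --- which is exactly what the paper proves.
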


\begin{proof}
The Tate-Shafarevich group ${\cyrrm{SH}}_T$ is defined to be the kernel of the map
$$H^1(\GQ,T(\Q))\rightarrow\prod_p H^1(\Gal(\overline{\Q_p}/\Q_p,T(\Q_p)).$$ A  result of Ono \cite{O} says that $$\tau_T = 
\frac{|{\cyrrm{SH}}_T|}{|H^1(\Gal(K/\Q),X(T))|}.$$ Now, since we are fixing $d$ and $n$ , the group $\Gal(K/\Q)$ is in one of a finite 
number of isomorphism classes of groups and by lemma \ref{basicequality} the pair $(\Gal(K,\Q),X(T))$ goes over a finite set of isomorphism types of $(G,M)$ 
where $G$ is a group and $M$ is a representation of $G$. 

Therefore $|H^1(\Gal(K/\Q),X(T))|$ is $O_{n,d}(1).$ Likewise, as stated in the proof of (\cite{PR},Prop 6.9) the group 
${\cyrrm{SH}}_T$ is a quotient of $H^2(\Gal(K/\Q),X(T))$ and so its order is $O_{n,d}(1)$ by the same argument.

\end{proof}

\begin{lemma}\label{unit}

The order of the group of units satisfies $w_T = O_{n,d}(1)$, and so $w_T$ is Discriminant negligible.

\end{lemma}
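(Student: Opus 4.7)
The plan is to reduce the bound on torsion in $T(\Q)$ to a bound on roots of unity in the splitting field $K$, and then invoke the classical fact that $|\mu(K)|$ is bounded in terms of $[K:\Q]$.

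First I would observe that the natural inclusion $T(\Q)\subset T(K)$ gives an inclusion of torsion subgroups $T(\Q)^{tor}\hookrightarrow T(K)^{tor}$. Since $T$ splits over $K$, we have an isomorphism $T_K\cong \G_m^d$ of algebraic groups over $K$, hence an isomorphism of abstract groups $T(K)\cong (K^\times)^d$. Consequently
\[
T(K)^{tor}\cong \mu(K)^d,
\]
where $\mu(K)$ denotes the group of roots of unity in $K$.

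Next I would bound $|\mu(K)|$ purely in terms of $n=[K:\Q]$: if $\zeta_m\in K$ then $\Q(\zeta_m)\subset K$, so $\varphi(m)\mid n$ and in particular $\varphi(m)\leq n$. Since $\varphi(m)\to\infty$ as $m\to\infty$, there is some $M=M(n)$ with $\mu(K)\subseteq\mu_M$, giving $|\mu(K)|\leq M(n)=O_n(1)$. Combining this with the previous step yields
\[
w_T=|T(\Q)^{tor}|\leq |T(K)^{tor}|=|\mu(K)|^d=O_{n,d}(1),
\]
which in particular implies that $w_T$ is discriminant negligible (trivially, since it is bounded).

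There is no real obstacle here: the only substantive input is the elementary bound on roots of unity in a number field of bounded degree, and the proof does not even require the full machinery from Section~2. One should only be careful to distinguish between $T(\Q)^{tor}$ (the group of all torsion elements of $T(\Q)$) and the ``unit group'' $T(\Q)\cap T(\hat{\Z})$; the paper's definition makes clear that $w_T$ is the order of the former, which is precisely the torsion subgroup of the full group of $\Q$-points as a direct summand of $T(\Q)\cap T(\hat{\Z})$.
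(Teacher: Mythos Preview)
Your proof is correct and follows essentially the same approach as the paper: embed $T(\Q)^{tor}$ into $T(K)^{tor}\cong\mu(K)^d$ and use that $|\mu(K)|$ is bounded in terms of $[K:\Q]$ via the Euler $\varphi$-function. Your write-up is in fact cleaner than the paper's short argument, which contains a couple of typographical slips (writing $n$ for the exponent and $d$ for the degree of $K$).
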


Note $T(\Q)^{tor}$ is a subgroup of $T(K)^{tor}\approx ((K^{\times})^{tor})^n$. The degree of a primitive $q'th$ root of unity over $\Q$ is $\phi(q)$. Since 
$\phi(q)$ goes
to infinity with $q$ and the degree of $K$ is $d$, the order of the unit group of $K$ is $O_d(1)$. The result follows.

\begin{lemma} \label{isog}

 Let $T$ and $T'$ be isogenous Tori in $C(K/\Q)$, of dimension $d$. Then $D_T=D_{T'} \cdot O_{\epsilon,d,n}(D_K^{\epsilon})$.
 
 \end{lemma}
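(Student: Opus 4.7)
The plan is to apply Shyr's class number formula (Theorem \ref{shyr}) to both $T$ and $T'$ and compare. Since $T$ and $T'$ are isogenous, $X(T)_{\Q}\cong X(T')_{\Q}$ as $\Gal(\bar{\Q}/\Q)$-representations, so the Artin $L$-data attached to them coincide; in particular $\rho_T=\rho_{T'}$. The three preceding lemmas bound $w_T,w_{T'},\tau_T,\tau_{T'}$ by $O_{n,d}(1)$. Dividing Shyr's formula for $T$ by that for $T'$ therefore yields
\[
\frac{|D_T|^{1/2}}{|D_{T'}|^{1/2}} \;=\; \frac{h_T R_T}{h_{T'} R_{T'}}\cdot O_{n,d}(1),
\]
and the lemma reduces to showing $h_T R_T/(h_{T'} R_{T'})=O_{\epsilon,n,d}(D_K^{\epsilon})$.

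To establish this, I would work with the isogeny $\phi\colon T\to T'$ directly. Theorem \ref{basicequality} bounds the number of isomorphism classes of $d$-dimensional integral $\Gal(K/\Q)$-representations by $O_{n,d}(1)$; within a single rational isogeny class one can then check that the minimal index of a $\Gal(K/\Q)$-equivariant embedding between two such lattices is bounded by a constant depending only on $n$ and $d$. Hence $\phi$ may be chosen so that $F:=\ker\phi$ has order $|F|=O_{n,d}(1)$. The short exact sequence $1\to F\to T\to T'\to 1$ induces maps on unit groups $E\to E'$ and on idelic class groups $CL_T\to CL_{T'}$ whose kernels and cokernels are subquotients of Galois cohomology groups of $F$ (and its local analogues).

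The main obstacle is controlling these cohomological corrections by $D_K^{\epsilon}$: the naive Minkowski bound $|Cl_K[|F|]|\leq|Cl_K|\ll D_K^{1/2+\epsilon}$ is too weak. A clean resolution comes from Ono's formula for the behaviour of Tamagawa numbers under isogeny, which expresses $\tau_T/\tau_{T'}$ as a quotient of sizes of Galois cohomology groups of $F$; combined with the already-established bounds $\tau_T,\tau_{T'}=O_{n,d}(1)$, this forces all the relevant cohomological ratios to be $O_{n,d}(1)$. Tracking these through the exact sequences relating the unit groups and class groups of $T$ and $T'$ gives $h_T R_T/(h_{T'} R_{T'})=O_{n,d}(1)$ directly, yielding in fact the stronger conclusion $D_T=D_{T'}\cdot O_{n,d}(1)$, from which the lemma follows immediately.
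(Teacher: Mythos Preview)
Your reduction via Shyr's formula to bounding $h_TR_T/(h_{T'}R_{T'})$ is fine in principle, but the final paragraph contains a genuine gap. The assertion that Ono's isogeny formula for Tamagawa numbers ``forces all the relevant cohomological ratios to be $O_{n,d}(1)$'' conflates global and local contributions. The ratio $\tau_T/\tau_{T'}$ is a single global invariant; knowing it is $O_{n,d}(1)$ does not control the individual local terms that enter the comparison of class numbers. Concretely, the cokernel of $T(\hat{\Z})\to T'(\hat{\Z})$ decomposes as a product over all primes, and at each prime $p$ ramified in $K$ the local factor can be nontrivial even when $p\nmid |F|$. Since the number of ramified primes can be of order $\log D_K/\log\log D_K$, the product of these local corrections is genuinely of size $D_K^{o(1)}$, not $O_{n,d}(1)$. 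Your claimed ``stronger conclusion'' $D_T=D_{T'}\cdot O_{n,d}(1)$ is therefore not established; the argument, if carried through correctly, only yields the $D_K^{\epsilon}$ bound.

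The paper avoids this detour entirely by working directly with the quasi-discriminant. Shyr's analysis gives the local product formula $D_T/D_{T'}=\prod_p q(\lambda_p^c)$, where $q(\lambda_p^c)=|\textrm{Coker}(\lambda_p^c)|/|\textrm{Ker}(\lambda_p^c)|$ for the induced map $\lambda_p^c\colon T(\Z_p)\to T'(\Z_p)$. After choosing the isogeny of bounded degree $m=O_{n,d}(1)$ via Theorem~\ref{basicequality} (as you also do), Shyr shows $q(\lambda_p^c)=1$ whenever $p$ is unramified in $K$ and $p\nmid m$, and one has the crude bound $|\textrm{Coker}(\lambda_p^c)|\le m^d$ at the remaining primes. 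This yields $D_T/D_{T'}=O_{n,d}(m^{d|S|})$ with $S$ the set of ramified primes, hence $O_{\epsilon,n,d}(D_K^{\epsilon})$. This prime-by-prime bookkeeping is exactly the local analysis your approach would be forced to perform on the $h_TR_T$ side, but done on the side of Shyr's formula where it is transparent.
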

 
 \begin{proof}
 
 By Lemma \ref{basicequality} there exists an isogeny $\lambda:T\rightarrow T'$ between $T$ and $T'$ of degree $m$, where $m=O_{d,n}(1)$. For each
 finite prime $p$ this induces a map $\lambda_p^c:T(\Z_p)\rightarrow T'(\Z_p)$. Define 
$q(\lambda_p^c)=\frac{|\textrm{Coker}(\lambda_p^c)|}{|\textrm{Ker}(\lambda_p^c)|}$. Then by the discussion at the beginning of section 4 of \cite{S}, we have
$$D_T/D_{T'} = \prod_{p}q(\lambda_p^c).$$

Now, as Shyr points out at the end of section 2 of \cite{S}, $q(\lambda_p^c)=1$ for all primes $p$ that are unramified in $K$ and satisfy $(p,m)=1$. 
Since $\lambda$ is an isogeny of degree $m$, it factors through the $\times m$ map, and so we have 
$$|\textrm{Coker}(\lambda_p^c)|\leq |T'(\Z_p)[m]|\leq|T'(\overline{\Q_p})[m]|= m^d.$$ Setting $S$ to be the set of all primes ramified in $K$, we thus have
$$D_T=D_{T'}\times O_{d,n}(m^{d|S|})$$ from which the result follows.

 \end{proof}

\begin{cor}\label{discartin}
Let $T$ be a torus in $C(K/\Q)$ of dimension $d$. Let $\rho$ be the associated integral Galois representation, and 
$\rho_{\Q}$ the induced rational representation. Let $f_{\rho}$ be the norm of the Artin conductor corresponding to $\rho_{\Q}$. Then
$\frac{f_{\rho}}{D_T}$ is Discriminant negligible.
\end{cor}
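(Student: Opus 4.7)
The plan is to reduce to the case of Weil restriction tori $\mathrm{Res}_{L/\Q}\G_m$, where both $D_T$ and $f_\rho$ are controlled by the classical conductor-discriminant formula, and then to transport the conclusion to a general $T$ using the isogeny invariance provided by Lemma \ref{isog}.

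First I would invoke Artin's induction theorem in its rational form: every rational representation of a finite group $G$ is, as a virtual representation, a $\Q$-linear combination of permutation representations $\mathrm{Ind}_H^G \mathbf{1}$. Applied to $G = \Gal(K/\Q)$ and the rational representation $\rho_\Q$, clearing denominators yields a positive integer $m$ and rational permutation representations $V_1, V_2$ such that
$$m\cdot\rho_\Q \oplus V_1 \cong V_2$$
as $\Q G$-modules. By Theorem \ref{basicequality} the pair $(G,\rho)$ lies in only finitely many isomorphism classes depending on $n,d$, so $m$, the dimensions of $V_1, V_2$, and the number of permutation summands can all be bounded in terms of $n$ and $d$. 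Choosing integral lattices given by direct sums of permutation lattices $\Z[G/H]$ converts this $\Q$-isomorphism into an isogeny of $\Q$-tori
$$T^m \times T_1 \;\sim\; T_2,$$
where $T_1 = \prod_i \mathrm{Res}_{L_i/\Q}\G_m$ and $T_2 = \prod_j \mathrm{Res}_{L'_j/\Q}\G_m$ for the intermediate subfields $L_i = K^{H_i}$, $L'_j = K^{H'_j}$, and the degree of this isogeny is bounded in terms of $n,d$.

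Next I would apply Lemma \ref{isog} and exploit that both invariants are multiplicative on products and direct sums respectively. Using $D_{\mathrm{Res}_{L/\Q}\G_m} = D_L$ and $D_{T\times T'} = D_T D_{T'}$, the lemma produces
$$D_T^m \cdot \prod_i D_{L_i} \;=\; \prod_j D_{L'_j} \cdot O_{\epsilon,n,d}(D_K^{\epsilon}).$$
On the Artin-conductor side, multiplicativity of $f$ on direct sums combined with $m\rho_\Q\oplus V_1\cong V_2$ yields
$$f_\rho^m \cdot \prod_i f_{\mathrm{Ind}_{H_i}^G\mathbf{1}} \;=\; \prod_j f_{\mathrm{Ind}_{H'_j}^G\mathbf{1}}.$$
The classical conductor-discriminant formula identifies $D_L = f_{\mathrm{Ind}_{\Gal(K/L)}^G \mathbf{1}}$ for every intermediate $L$, so the right-hand sides of the last two displays agree up to the negligible factor. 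Cancelling the $\prod_i$-terms leaves $D_T^m = f_\rho^m \cdot O_{\epsilon,n,d}(D_K^{\epsilon})$; extracting an $m$-th root and absorbing the bounded $m$ into a new $\epsilon$ shows that $f_\rho/D_T$ is discriminant negligible.

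The only nontrivial step is the passage from Artin's rational decomposition to an actual isogeny of integral tori of uniformly bounded degree; this is where Theorem \ref{basicequality} and Lemma \ref{isog} together do the real work, converting finiteness of the possible pairs $(G,\rho)$ into a uniform $O_{\epsilon,n,d}(D_K^{\epsilon})$ comparison of discriminants. Once that is set up, the rest is a purely formal manipulation of multiplicativity identities and the conductor-discriminant formula.
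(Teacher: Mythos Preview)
Your proposal is correct and follows essentially the same route as the paper. The paper invokes Ono's Theorem~1.5.1 to produce bounded integers $m,m_i,n_i$ with $T^m\times\prod_i(\mathrm{Res}_{L_i/\Q}\G_m)^{m_i}$ isogenous to $\prod_i(\mathrm{Res}_{L_i/\Q}\G_m)^{n_i}$, then applies Lemma~\ref{isog} together with multiplicativity of $D_T$ and $f_\rho$ and the identity $D_{\mathrm{Res}_{L/\Q}\G_m}=D_L=f_{\mathrm{Ind}\,\mathbf 1}$; you instead unpack Ono's result as Artin's rational induction theorem combined with Theorem~\ref{basicequality} to bound the data, which is precisely how Ono proves his theorem.
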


\begin{proof}

For tori of the form $R_{L/\Q}(\G_m)$, where $L$ is a subfield of $K$, we actually have equality since the Artin conductor and the quasi-discriminant
are both equal to the discriminant of $L$. By Theorem 1.5.1 in \cite{O2} there are integers $m,m_i,n_i $ of size $O_{n,d}(1)$ such that $T^m\times 
\prod_{i}(R_{L_i/\Q}(\G_m))^{m_i}$ is isogenous to $\prod_{i}(R_{L_i/\Q}(\G_m))^{n_i}$ where
$L_i$ goes over all subfields of $K$. Since both the quasi-discriminant and the Artin conductor are multiplicative under products, the 
corollary now follows from Lemma \ref{isog}.

\end{proof}

The following lemma is essential in showing that the class number of Tori is large in terms of the \emph{Discriminant} of the splitting field.
 
\begin{lemma}\label{artin}

If $K$ is a Galois extension of $\Q$ of degree $n$ and $\rho:\Gal(K/Q)\rightarrow GL(V)$ is a representation of $\Gal(K/\Q)$ on a complex
vector space $V$ with trivial kernel, then $$f_{\rho}\gg_n D_K^{\frac1{n-1}}.$$

\end{lemma}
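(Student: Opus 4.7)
The plan is to reduce the inequality to a purely local, term-by-term comparison between the Artin conductor exponent of $\rho$ and the discriminant exponent of $K$, at each ramified prime $p$. Fix such a $p$, choose a prime $\mathfrak{p}$ of $K$ above $p$, and let $G_0 \supseteq G_1 \supseteq \cdots$ denote the lower-numbered ramification filtration of the decomposition group at $\mathfrak{p}$. I will invoke two standard formulas from Serre's \emph{Local Fields}: the discriminant exponent
$$v_p(D_K) \;=\; \frac{n}{|G_0|}\sum_{i \geq 0}(|G_i|-1),$$
and the Artin conductor exponent
$$v_p(f_\rho) \;=\; \sum_{i \geq 0}\frac{|G_i|}{|G_0|}\cdot\mathrm{codim}(V^{G_i}).$$

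The heart of the argument is a pointwise inequality: for each $i \geq 0$,
$$\frac{|G_i|}{|G_0|}\cdot\mathrm{codim}(V^{G_i}) \;\geq\; \frac{1}{n-1}\cdot\frac{n}{|G_0|}(|G_i|-1).$$
If $G_i$ is trivial, both sides vanish. If $G_i$ is nontrivial, then because $\rho$ has trivial kernel, $\rho(G_i)$ is a nontrivial subgroup of $GL(V)$, so $V^{G_i}$ is a proper subspace and $\mathrm{codim}(V^{G_i}) \geq 1$; the desired inequality then reduces to $(n-1)|G_i| \geq n(|G_i|-1)$, i.e. $n \geq |G_i|$, which is automatic since $G_i$ is a subgroup of $\Gal(K/\Q)$.

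Summing over $i$ yields $v_p(f_\rho) \geq v_p(D_K)/(n-1)$ at every prime $p$, and taking the product over all $p$ gives the clean bound $f_\rho \geq D_K^{1/(n-1)}$, which is even slightly stronger than the $\gg_n$ version claimed in the lemma. The only ingredients beyond the two formulas above are faithfulness of $\rho$ (used in one line to rule out $\mathrm{codim}(V^{G_i}) = 0$) and the trivial inequality $|G_i| \leq n$, so there is really no substantive obstacle; the cleverness of the estimate lies entirely in recognizing that these term-by-term weights line up.
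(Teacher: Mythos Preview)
Your proof is correct and in fact sharper than the paper's: you obtain the clean inequality $f_\rho \geq D_K^{1/(n-1)}$ with no implied constant. The paper's argument is coarser. It uses only that every ramified prime $p$ satisfies $v_p(f_\rho) \geq 1$ (by faithfulness of $\rho$), together with the bound $v_p(D_K) \leq n-1$ valid at every \emph{tamely} ramified prime; the finitely many wildly ramified primes (necessarily divisors of $n$) are then absorbed into the $O_n(1)$ constant. In contrast, you carry the full higher ramification filtration through both formulas and compare them term by term, which handles the wild case uniformly rather than sweeping it into the constant. The paper's approach has the virtue of requiring almost no machinery (it never writes down the higher ramification formula for the conductor), while yours buys a cleaner statement and shows exactly where the exponent $1/(n-1)$ comes from: the pointwise inequality $(n-1)|G_i| \geq n(|G_i|-1)$.
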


\begin{proof}

Let $\rho:\Gal(K/\Q)\rightarrow GL(V)$. Since $\rho$ has 
trivial kernel, its non-trivial on all non-trivial inertia groups. Let $I_p$ denote the inertia group of a prime over $p$. Since $v_p(f_{\rho})\geq 
dim_{\C}(V/V^{I_p})$, each ramified prime $p$ of $K$ divides $f_{\rho}$. Now, each prime which is tamely ramified divides $D_K$ with exponent
at most $n-1$. Since the contribution of the wildly ramified primes is a priori bounded by the dimension, we have $$D_K\ll_n \prod_{p}p^{n-1}$$ where the product 
is over ramified primes in $K$. This implies the lemma.

\end{proof}

Putting together all the above we arrive at 

\begin{thm}(Brauer-Siegel Theorem for Tori)\label{BST}\newline

Let $T$ be a torus of dimension $d$ in $C(K/\Q)$ with $[K:\Q]=n$, with $K$ being the minimal splitting field of $T$. Let $h_T,R_T$ be the class number and regulator of $T$ respectively. Also, let $\rho$ be the associated integral representation of $\GQ$ and $\rho_{\Q}$ the induced rational representation. Let $f_{\rho}$ be the norm of the Artin conductor
corresponding to $\rho_{\Q}$. Then we have 
\begin{align*}
\frac12\ln(h_TR_T)&=\frac12 \ln f_{\rho}+ o_{n,d}(\ln D_K) \\
&=(\frac12 + o_{n,d}(1))\ln f_{\rho}\\
\end{align*}

\end{thm}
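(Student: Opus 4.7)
The plan is to take logarithms of Shyr's class number formula (Theorem \ref{shyr}) and bound each of the auxiliary invariants on the right-hand side using the estimates that have just been established. Concretely, Shyr's formula gives
\begin{equation*}
\ln(h_T R_T) \;=\; \ln\rho_T + \ln w_T + \ln\tau_T + \tfrac{1}{2}\ln|D_T|.
\end{equation*}
The three preceding lemmas tell us that $\rho_T$, $w_T$, and $\tau_T$ are all discriminant-negligible, so each of $\ln\rho_T$, $\ln w_T$, $\ln\tau_T$ is $o_{n,d}(\ln D_K)$. Corollary \ref{discartin} then lets me replace $\ln|D_T|$ by $\ln f_\rho$ at the cost of another $o_{n,d}(\ln D_K)$ error. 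Collecting these estimates yields the first equality in the theorem.

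For the second equality I would invoke Lemma \ref{artin}. The key observation is that the hypothesis of that lemma applies: since $K$ is the \emph{minimal} splitting field of $T$, any element of $\Gal(K/\Q)$ that acts trivially on $X(T)$ must fix $K$, so $\rho$ has trivial kernel as a representation of $\Gal(K/\Q)$, and the same is true of $\rho_\Q$ since $X(T)$ embeds in $X(T)\otimes\Q$. Lemma \ref{artin} then gives $\ln f_\rho \gg_n \frac{1}{n-1}\ln D_K$, or equivalently $\ln D_K \ll_n \ln f_\rho$, so any quantity that is $o_{n,d}(\ln D_K)$ is also $o_{n,d}(\ln f_\rho)$. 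Substituting this into the first equality immediately produces the asymptotic form $(\tfrac{1}{2}+o_{n,d}(1))\ln f_\rho$.

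Essentially all the real work has already been carried out in the preceding lemmas — Siegel-type bounds for $\rho_T$ (ineffective), the cohomological bounds of Ono for $\tau_T$, the elementary bound for $w_T$, and the isogeny-plus-Ono argument controlling $D_T$ via $f_\rho$. The proof of the theorem itself is thus a bookkeeping assembly of these inputs, and the only nontrivial final step is the verification that Lemma \ref{artin} applies so that the error term $o_{n,d}(\ln D_K)$ can be absorbed into $o_{n,d}(\ln f_\rho)$. The principal conceptual obstacle — showing that $D_T$ and $f_\rho$ agree up to a discriminant-negligible factor, which is what makes the Artin conductor (rather than the quasi-discriminant) the natural quantity on the right-hand side — has already been handled in Corollary \ref{discartin} via the Ono-type isogeny decomposition.
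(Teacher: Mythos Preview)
Your proposal is correct and follows essentially the same approach as the paper: the first equality is obtained by taking logarithms in Shyr's formula and applying the preceding lemmas together with Corollary~\ref{discartin}, and the second equality is deduced from Lemma~\ref{artin} using that $K$ is the minimal splitting field of $T$. Your write-up is in fact more detailed than the paper's, which simply cites the relevant inputs; in particular, your explicit verification that minimality of $K$ forces $\rho$ to have trivial kernel is a useful clarification.
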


\begin{proof}

The first equality is an immediate consequence of the previous lemmas and Theorem \ref{shyr}. The second equality follows from Lemma \ref{artin} and the fact that
$K$ is the minimal splitting field for $T$.
\end{proof}

\section{Structure theorems for Class groups of Tori}

We focus in this section on the following setup: Suppose we have a map $\phi:S\rightarrow T$ of tori defined over $\Q$. This then induces a map of class groups
$\phi_0:Cl_S\rightarrow Cl_T$, and we wish to understand how large the image of $\phi_0$ is. This question can be very difficult. For instance, suppose that
$S\cong T$ and $\phi$ is the map sending $t$ to $t^n$. Then the question reduces to understanding the size of the group $Cl_T[n]$, and bounding torsion
for class groups of number fields is notoriously difficult. We show, however, that this is essentially the only hard case. More precisely, we prove the following theorem:

\begin{thm}\label{torusexact}
Let 
$$\xymatrix{
1\ar[r]&S\ar[r]^{f}&T\ar[r]^g&U\ar[r]&1\\
}$$
be an exact sequence of Tori, where $S,T,U$ are in $C(K/\Q), [K:\Q]=n$ and the dimension of $T$ is $d$. Then the homology groups of the induced sequence
$$\xymatrix{
1\ar[r]&Cl_S\ar[r]^{f_0}&Cl_T\ar[r]^{g_0}&Cl_U\ar[r]&1\\
}$$
are Discriminant negligible.
\end{thm}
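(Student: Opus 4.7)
My strategy is to embed $Cl_S \to Cl_T \to Cl_U$ into a longer exact sequence whose remaining terms are Galois cohomology groups of the character lattices $X(S), X(T), X(U)$, and then invoke Theorem \ref{basicequality} to bound those groups by $O_{n,d}(1)$.

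To set this up, I would begin with the tautological four-term exact sequence attached to each torus $X \in \{S, T, U\}$,
$$1 \to U_X \to X(\Q) \to X(\A_f)/X(\hat{\Z}) \to Cl_X \to 1,$$
where $U_X = X(\Q) \cap X(\hat{\Z})$. Stacking these as columns produces a commutative diagram whose rows arise by applying the functors of units, $\Q$-points, and adelic cosets to $1 \to S \to T \to U \to 1$. Each row is part of a long exact sequence of Galois cohomology, with connecting maps landing in $H^1(\GQ, S(\bar{\Q}))$ globally and in a restricted product of local $H^1$'s adelically. Chasing this diagram via iterated snake lemmas (or equivalently, taking the total complex of the associated bicomplex) should yield a long exact sequence
$$\ldots \to E_1 \to Cl_S \to Cl_T \to Cl_U \to E_2 \to \ldots,$$
in which each $E_i$ is assembled from global and local Galois cohomology of $X(S), X(T), X(U)$, together with Tate-Shafarevich-type terms of the sort appearing in Ono's formula for $\tau_T$ (already exploited in Section 4).

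To bound the $E_i$, I would use Theorem \ref{basicequality}: the data $(\Gal(K/\Q), X(S), X(T), X(U))$ belongs to $O_{n,d}(1)$ isomorphism classes, so each global cohomology group has order $O_{n,d}(1)$, and the same bound applies to every local cohomology group at a ramified prime. Since local contributions at unramified primes vanish and the number of ramified primes is $O(\log D_K/\log\log D_K)$, their cumulative product is at most $D_K^{o(1)}$, i.e.\ discriminant negligible. From the long exact sequence, the three homology groups of $Cl_S \to Cl_T \to Cl_U$ are all sandwiched inside sub- or quotients of these $E_i$, hence are themselves discriminant negligible.

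The main obstacle is that the functor $X \mapsto X(\hat{\Z})$ is not exact on the sequence of tori: its failure is concentrated at ramified primes, so the adelic-cosets row of the diagram is exact only up to local error terms. Controlling these requires tracking local indices analogous to the $q(\lambda_p^c)$ of Lemma \ref{isog} and confirming that their cumulative contribution is again discriminant negligible. I expect this bookkeeping to be the most delicate step, but the underlying cohomological finiteness inputs are already in hand.
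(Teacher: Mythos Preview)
Your cohomological instinct is sound, and the paper does bound $\ker(f_0)$ and $\mathrm{coker}(g_0)$ by $O_{n,d}(1)$ using Galois-cohomological inputs of the kind you describe. But the paper's architecture differs from yours in an essential way, and your proposal has a real gap at the middle homology.

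The paper does \emph{not} try to thread a single long exact sequence through $Cl_S \to Cl_T \to Cl_U$. It handles the three homology groups separately. For $\mathrm{coker}(g_0)$ it works with the intermediate object $Cl(X) := X(\Q)\backslash X(\A_f)$ (no quotient by $X(\hat{\Z})$), compares this via Nakayama--Tate duality to $H^0(\Gal(K/\Q),\, X(K)\backslash X(\A^K))$, and exploits that over $K$ the tori are split so the sequence $1 \to S(K)\backslash S(\A^K) \to T(K)\backslash T(\A^K) \to U(K)\backslash U(\A^K) \to 1$ is genuinely exact; the descent back to $\Q$ costs only $|H^i(\Gal(K/\Q), X(\cdot))| = O_{n,d}(1)$. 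For $\ker(f_0)$ it gives a direct computation with maximal compacts and unit groups. The middle homology is then dispatched by the Brauer--Siegel theorem for tori (Theorem~\ref{BST}): since Artin conductors are multiplicative in exact sequences, $h_T/(h_S h_U)$ is discriminant negligible, and the elementary identity
\[
\bigl|\ker(g_0)/\mathrm{im}(f_0)\bigr| \;=\; \frac{h_T}{h_S\, h_U}\cdot \bigl|\ker(f_0)\bigr|\cdot \bigl|\mathrm{coker}(g_0)\bigr|
\]
finishes the job.

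Your bicomplex would handle the middle term automatically \emph{if} the promised long exact sequence existed, but it does not: as you yourself flag, $X \mapsto X(\hat{\Z})$ is not exact, so the adelic-cosets row is not a complex with controlled homology, and the total-complex spectral sequence does not collapse to a long exact sequence. Your ``bookkeeping'' with local indices at ramified primes can plausibly repair the outer terms, but it is not at all clear how to extract a bound on $\ker(g_0)/\mathrm{im}(f_0)$ from local data alone. The paper's invocation of Theorem~\ref{BST} here is not cosmetic; it is the substantive input that replaces the long exact sequence you do not have. Either incorporate it, or replace your direct-over-$\Q$ diagram with the paper's pass-to-$K$-then-descend manoeuvre, which sidesteps the $X(\hat{\Z})$ non-exactness entirely.
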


\begin{proof}

For a torus $T$ define $Cl(T)$ to be $T(\Q)\backslash T(\A_f)$.
By considering the cohomology sequence associated to the exact sequence 
\begin{equation}\label{Adeleseq}
\xymatrix{
1\ar[r]&T(K)\ar[r]&T(\A^K)\ar[r]&T(K)\backslash T(\A^K)\ar[r]&1\\
} 
\end{equation}

we see that there is an injection $$\psi_T:Cl(T)\hookrightarrow H^0(\Gal(K/\Q),T(K)\backslash T(\A^K)).$$

Moreover, the image contains the image of the norm map on $T(K)\backslash T(\A^K)$ and thus the size of the cokernel of $\psi_T$ is at most the size of the
Tate cohomology group $\hat{H}^0(\Gal(K/\Q),T(K)\backslash T(\A^K))$. By Nakayama-Tate duality this is isomorphic to 
$H^2(\Gal(K/\Q),X(T))$ which is $O_{n,d}(1)$ by Lemma \ref{basicequality} as has already been discussed. 

Now, suppose we have an exact
sequence of Tori $$\xymatrix{
1\ar[r]&S\ar[r]^{f}&T\ar[r]^g&U\ar[r]&1\\
}$$
as in the statement. We first prove that $g_0$ has a small cokernel, and then that $f_0$ has a small kernel. The theorem then follows since 
$h_T(h_Sh_U)^{-1}$ is Discriminant negligible by lemma \ref{BST}.

\begin{itemize}

\item \emph{$g_0$ has small cokernel:}

Since all our tori become split over $K$, we have an exact sequence

$$\xymatrix{
1\ar[r]&S(K)\backslash S(\A^K)\ar[r]^{f_1}&T(K)\backslash T(\A^K)\ar[r]^{g_1}&U(K)\backslash U(\A^K)\ar[r]&1\\
}.$$

Taking Cohomology, we get an exact sequence 
$$H^0(\Gal(K/\Q),T(K)\backslash T(\A^K))\rightarrow H^0(\Gal(K/\Q),U(K)\backslash U(\A^K))\rightarrow H^1(\Gal(K/\Q),S(K)\backslash S(\A^K)).$$

By Nakayama Tate duality, $$H^1(\Gal(K/\Q),S(K)\backslash S(\A^K))\cong H^1(\Gal(K/\Q),X(S))$$ and is thus of size $O_{n,d}(1)$. Consider the commutative diagram
$$\xymatrix{
Cl(T)\ar[r]\ar[d]^{\psi_T}&Cl(U)\ar[d]^{\psi_U}\\
H^0(\Gal(K/\Q),T(K)\backslash T(\A^K))\ar[r]&H^0(\Gal(K/\Q),U(K)\backslash U(\A^K))\\
}.$$

The downward maps are injective with cokernels of size $O_{n,d}(1)$ and the bottom map has cokernel of size $O_{n,d}(1)$, so the map $Cl(T)\rightarrow Cl(U)$ 
must also have cokernel of size $O_{n,d}(1)$. Finally, consider the commutative diagram
$$\xymatrix{
Cl(T)\ar[r]\ar[d]&Cl(U)\ar[d]\\
Cl_T\ar[r]^{g_0}&Cl_U\\
}.$$

The top map has cokernel of size $O_{n,d}(1)$ and the downwards maps are surjective, so we see that $g_0$ also has cokernel of size $O_{n,d}(1)$.

\item\emph{$f_0$ has small kernel:}

For a ring $R$, we consider $S(R)$ as a subgroup of $T(R)$. We have

$$\textrm{ker}(f_0) \approx \frac{T(\hat{\Z})T(\Q)\cap S(\A_f)}{S(\hat{\Z})S(\Q)}.$$

By group theory we have: \[\left|\frac{T(\hat{\Z})T(\Q)\cap S(\A_f)}{S(\hat{\Z})S(\Q)}\right|\leq\left|\frac{T(\hat{\Z})\cap S(\A_f)}{S(\hat{\Z})}\right|\cdot
\left|\frac{T(\hat{\Z})S(\A_f)\cap T(\Q)}{S(\Q)(T(\hat{\Z})\cap T(\Q))}\right|\]

The first term on the right hand side is 1 since $S(\hat{\Z})$ is the maximal compact subgroup of $S(\A_f)$. For the second part, consider the map $g$ from 
$T(\A_f)$ to $U(\A_f)$. Since the kernel of $g$ is $S(\A_f)$ and $$T(\Q)\cap S(\A_f)= S(\Q),$$ this induces an injective map $\tilde{g}$ on 
\[\frac{T(\hat{\Z})S(\A_f)\cap T(\Q)}{S(\Q)(T(\hat{\Z})\cap T(\Q))}.\]

The image of $g$ on $T(\hat{\Z})S(\A_f)\cap T(\Q)$ lies inside $U(\Q)\cap U(\hat{\Z}).$ Now, $$U(\Q)\cap U(\hat{\Z})=U(\Q)^{tor}\times E_U$$ where $E_U$ is a finitely generated free abelian group of rank at most $d$.
Also, corresponding to the map $g:T\rightarrow U$ there is a map $h:U\rightarrow T$ such that $g\circ h$ is the multiplication by $m$ map on $U$, where
$m=O_{n,d}(1)$. This implies that the image of $g$ on $S(\Q)(T(\hat{\Z})\cap T(\Q)$ is at least the image of the multiplication by $m$ map on 
$E_U$. Therefore, the size of the image of $\tilde{g}$ is bounded by $|E_U[m]|\cdot|U(\Q)^{tor}|\leq m^d\cdot w_U$, which means $ker(f_0)=O_{d}(1)$ as desired.

\end{itemize}

\end{proof}





As a byproduct of the above, we get the following important corollary, which can be considered as a very general transfer principle for torsion in class groups.

\begin{cor}\label{transfer}

Let $M$ be a fixed finite abelian group on which $\Gal(K/\Q)$ acts, where $[K:\Q]$ is bounded by $n$. 
Let $T_1,T_2,S_1,S_2$ be Tori which split over $K$ whose dimensions are bounded by $d$. Let $\phi_i:T_i\rightarrow S_i, i=1,2$ be maps  which are surjective
as maps of Tori, and such that the cokernel of $X(T_i)^*\rightarrow X(S_i)^*$ is isomorphic to $M$ as a $\Gal(K/\Q)$ module. Finally, let $C_i$ be the size
of the cokernel of the induced map $\widetilde{\phi_i}:Cl_{T_i}\rightarrow Cl_{S_i}$. Then $\frac{C_1}{C_2}$ is discriminant negligible. 

\end{cor}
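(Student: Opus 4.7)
The approach is to reduce the problem to the case of an isogeny and then apply the Brauer--Siegel theorem for tori. For each $i = 1, 2$, let $N_i := \ker(\phi_i)$, let $N_i^0 \subset N_i$ be the maximal subtorus, and set $S_i' := T_i / N_i^0$. Then $\phi_i$ factors as
$$T_i \xrightarrow{\alpha_i} S_i' \xrightarrow{\beta_i} S_i,$$
where $\alpha_i$ is a surjection of tori with torus kernel and $\beta_i$ is a $\Q$-isogeny with finite kernel $F_i := N_i / N_i^0$. The cokernel of $X(T_i)^* \to X(S_i)^*$ coincides with the torsion subgroup of $X(T_i)/X(S_i) = X(N_i)$, which is $X(F_i)$; the hypothesis $X(F_i) \cong M$ then determines $F_i$ canonically, so $F_1$ and $F_2$ are isomorphic, and I write $F$ for this common finite group scheme.

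Applying Theorem~\ref{torusexact} to the exact sequence $1 \to N_i^0 \to T_i \to S_i' \to 1$ shows that $|\mathrm{Coker}(Cl_{T_i} \to Cl_{S_i'})| = O_{n,d}(1)$, which is discriminant negligible. Combining this with the elementary right-exact sequence $\mathrm{Coker}(f) \to \mathrm{Coker}(g \circ f) \to \mathrm{Coker}(g) \to 0$ for composable maps, we see that $C_i$ differs from $B_i := |\mathrm{Coker}(\widetilde{\beta_i})|$ by only a discriminant negligible factor. It thus suffices to show that $B_1 / B_2$ is discriminant negligible.

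For the isogeny part, I would use the identity $B_i = (h_{S_i} / h_{S_i'}) \cdot |\ker(\widetilde{\beta_i})|$. Since $S_i$ and $S_i'$ are $\Q$-isogenous, Lemma~\ref{isog} and Corollary~\ref{discartin} imply that $f_{\rho_{S_i}}/f_{\rho_{S_i'}}$ is discriminant negligible, so Theorem~\ref{BST} yields
$$\frac{h_{S_i}}{h_{S_i'}} = \frac{R_{S_i'}}{R_{S_i}} \cdot (\text{discriminant negligible}),$$
and hence $B_i$ agrees with $(R_{S_i'} / R_{S_i}) \cdot |\ker(\widetilde{\beta_i})|$ up to discriminant negligible factors. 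Both factors can be read off from the long exact Galois cohomology sequences attached to $1 \to F \to S_i' \to S_i \to 1$ at the various places of $\Q$ and adelically: the regulator ratio comes from the map of unit groups modulo torsion, whose cokernel injects into $H^1(\Q, F)$ and whose kernel is $F(\Q)$, while $\ker(\widetilde{\beta_i})$ is analyzed similarly via the adelic version. Since $F$ is identical for $i = 1, 2$, all terms that genuinely vary with $i$ lie in Galois cohomology groups of size $O_{n,d}(1)$ by Lemma~\ref{basicequality}, so the ratio $B_1 / B_2$ is bounded.

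The main obstacle will be this last cohomological bookkeeping: one must carefully verify that the $i$-dependent pieces of $(R_{S_i'} / R_{S_i}) \cdot |\ker(\widetilde{\beta_i})|$ are all bounded by $O_{n,d}(1)$, so that this quantity is determined by $F$ and $K$ up to discriminant negligible factors. All individual ingredients (local cohomology of $F$, images of $F(\Q)$, obstructions in $H^1(\Q, F)$) depend only on $F$, but they mix global and local data, and disentangling them requires an adelic local--global analysis analogous to that in the proof of Theorem~\ref{torusexact}.
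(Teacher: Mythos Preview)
Your reduction to the isogeny case is correct: factoring $\phi_i$ through $S_i' = T_i/N_i^0$ and applying Theorem~\ref{torusexact} to the torus-quotient part is fine, and the remaining isogeny $\beta_i\colon S_i' \to S_i$ indeed has kernel $F$ determined by $M$ alone.

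The gap is in the last step. You assert that the $i$-dependent contributions to $(R_{S_i'}/R_{S_i})\cdot|\ker\widetilde{\beta_i}|$ lie in Galois cohomology groups of $F$ of size $O_{n,d}(1)$, but $H^1(\Q,F)$ is \emph{infinite}: already for $F=\Z/m\Z$ with trivial action it equals $\mathrm{Hom}(G_{\Q},\Z/m\Z)$, which parametrizes cyclic extensions of $\Q$. The cokernel of the unit map $E_{S_i'}\to E_{S_i}$ sits inside a Selmer-type subgroup of $H^1(\Q,F)$ cut out by local conditions that depend on the tori $S_i$, and there is no reason those conditions agree for $i=1,2$. Likewise $|\ker\widetilde{\beta_i}|$ involves the arithmetic of $S_i(\Q)$ and $S_i(\hat{\Z})$, not only $F$. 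Lemma~\ref{basicequality} bounds $H^i(\Gal(K/\Q),X(T))$ for lattices $X(T)$, not global cohomology of finite group schemes, so it does not help here. Proving that the product depends only on $F$ up to discriminant-negligible factors is essentially as hard as the corollary itself; it would follow from something like the later Theorem~\ref{classmap}, which identifies $Cl_T$ with $\mathrm{Hom}_G(X(T),CL(K))$ up to negligible error and then gives $B_i \approx |\mathrm{Hom}_G(M^*,CL(K))|$ directly.

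The paper avoids all of this with a fiber-product trick: it builds a single auxiliary torus $U$ with cocharacter lattice $L_S = X(S_1)^* \times_M X(S_2)^*$ and fits everything into a $3\times 3$ diagram whose rows and columns are short exact sequences of cocharacter lattices. Applying Theorem~\ref{torusexact} to the columns and the snake lemma to the induced diagram of class groups shows that $C_1$ and $C_2$ each agree, up to discriminant-negligible factors, with the common quantity $|\mathrm{Coker}(Cl_{T_1\times T_2}\to Cl_U)|$. No intrinsic computation of either $C_i$ is ever needed, which is precisely what lets the argument bypass the unbounded global cohomology of $F$.
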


\begin{proof}

We shall work with cocharacters for the purpose of this proof. We have maps $\psi_i:X(S_i)^*\rightarrow M$ whose kernels
are $\phi_i(X(T_i)^*)$. Let $$L_S\subset X(S_1)^*\oplus X(S_2)^*$$ be the kernel of the map $$\Phi:X(S_1)^*\oplus X(S_2)^*\rightarrow M$$ defined by
$\Phi(a,b) = \psi_1(a)+\psi_2(b)$, and let $U$ be the torus whose cocharacter module is $L_S$. We then have the following commutative diagram:
$$\xymatrix{
0\ar[r]&X(T_1)^*\ar[r]^{\phi_1}&X(S_1)^*\ar[r]^{\psi_1}&M\ar[r]&0\\
0\ar[r]&X(T_1\oplus T_2)^*\ar[r]\ar[u]& L_S\ar[r]^{\psi_1}\ar[u]&M\ar[u]\ar[r]&0\\
0\ar[r]&X(T_2)^*\ar[r]^{\phi_2}\ar[u]&\phi_2(X(T_2)^*)\ar[u]\ar[r]^{\;\;\;\;\;\psi_2}&0\ar[u]\\
}.$$

The vertical and horizontal sequences are exact. Now consider the induced diagram on Class groups. Since $\phi_2$ is surjective, it follows from
theorem \ref{torusexact} that the induced map on class groups is surjective up to discriminant negligible factors. Also, the induced 
vertical sequences are exact up to discriminant negligible factors. Let $C$ be the size of the cokernel of $Cl_{T_1}\oplus Cl_{T_2}\rightarrow Cl_U$. 
An application of the snake lemma shows that $\frac{C_1}{C}$ is discriminant negligible. Likewise, $\frac{C_2}{C}$ is discriminant negligible. The result follows.

\end{proof}

We denote the size of the above cokernels by $CL(M)$. This quantity is well defined only up to a discriminant negligible factor.
 We formulate below a simple lemma that we will use in the subsequent sections:

\begin{lemma}\label{triangleinequality}
Let $0\rightarrow M_1\rightarrow M\rightarrow M_2\rightarrow 0$ be an exact sequence of $\Gal(K/\Q)$ modules. Then 
$$\max(CL(M_1),CL(M_2))\leq CL(M)\leq CL(M_1)\cdot CL(M_2).$$ 
\end{lemma}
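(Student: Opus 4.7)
The plan is to realize the module exact sequence geometrically by a composable chain of surjections of tori, then reduce every claimed inequality to elementary identities for the cokernels of composed homomorphisms of finite abelian groups, invoking Brauer--Siegel for the one delicate point.

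\emph{Setup.} Choose any surjection $\pi : L \twoheadrightarrow M$ from a finitely generated free $\Gal(K/\Q)$-lattice $L$ (for instance a suitable permutation module), and set $L_0 := \ker\pi$ and $L_1 := \pi^{-1}(M_1) \subset L$. The chain $L_0 \subset L_1 \subset L$ has $L_1/L_0 \cong M_1$, $L/L_1 \cong M_2$, and $L/L_0 \cong M$. Taking the tori $T_0, T_1, S$ with cocharacter lattices $L_0, L_1, L$ respectively, the inclusions yield surjective maps of tori $T_0 \xrightarrow{\phi} T_1 \xrightarrow{\psi} S$ with cocharacter cokernels $M_1, M_2$, and (for the composition) $M$. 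By Corollary~\ref{transfer} each of $CL(M_1), CL(M_2), CL(M)$ equals, up to a discriminant-negligible factor, the size of the cokernel of the corresponding induced class-group map $\tilde\phi, \tilde\psi$, or $\tilde\psi\circ\tilde\phi$.

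\emph{Formal inequalities.} For any composable pair $f : A \to B$, $g : B \to C$ of homomorphisms of abelian groups there is a natural right-exact sequence
\[
\mathrm{coker}(f) \xrightarrow{\bar g} \mathrm{coker}(g\circ f) \to \mathrm{coker}(g) \to 0.
\]
Applied to $(\tilde\phi, \tilde\psi)$, the surjection onto $\mathrm{coker}(\tilde\psi)$ immediately yields $CL(M_2) \leq CL(M)$, while counting orders along the sequence gives the upper bound $CL(M) \leq CL(M_1)\cdot CL(M_2)$.

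\emph{Main obstacle.} The last inequality, $CL(M_1) \leq CL(M)$, is the only non-formal point. The exact sequence furnishes the identity $|\mathrm{coker}(\tilde\phi)|/|\mathrm{coker}(\tilde\psi\tilde\phi)| = |\ker(\bar{\tilde\psi})|/|\mathrm{coker}(\tilde\psi)|$; combined with the fact that $\ker(\bar{\tilde\psi})$ is a quotient of $\ker(\tilde\psi)$ and the elementary identity $|\ker(\tilde\psi)|/|\mathrm{coker}(\tilde\psi)| = h_{T_1}/h_S$ for a homomorphism of finite abelian groups, it reduces the problem to showing $h_{T_1}/h_S$ is discriminant-negligible. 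Now $T_1$ and $S$ are isogenous (their cocharacter lattices have equal rank), so they share the same rational representation $\rho_\Q$ and Artin conductor $f_\rho$; Brauer--Siegel (Theorem~\ref{BST}) then forces $h_{T_1}R_{T_1} \sim h_SR_S$ up to DN, reducing the task to controlling the regulator ratio $R_S/R_{T_1}$. Because $M$ is fixed in the hypothesis of the lemma while $K$ varies, the isogeny degree $|M_2|$ is a bounded constant, and the standard commensurability of the unit lattices of isogenous tori bounds $R_S/R_{T_1}$ by $O_{|M|}(1)$, which is discriminant-negligible, completing the proof.
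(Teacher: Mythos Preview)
Your proof is correct and follows essentially the same route as the paper: realize the filtration $M_1\subset M$ by a chain of cocharacter lattices, identify the three $CL(\cdot)$ quantities with cokernels of the induced class-group maps via Corollary~\ref{transfer}, and then invoke the fact that isogenous tori have equal class numbers up to discriminant-negligible factors. The paper compresses your formal cokernel analysis into a single sentence and appeals directly to Theorem~\ref{BST} for the last step; you are more explicit in isolating the ratio $h_{T_1}/h_S$ and in noting that the regulator ratio is bounded because the isogeny degree $|M_2|\le |M|$ is fixed, a point the paper's proof leaves implicit.
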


\begin{proof}
Consider $\Gal(K/\Q)$-lattices $L_1,L_2$ such that $L_1/L_2\approx M$. by pulling back $M_1$ we can find a lattice $L_3$ containing $L_2$ and contained in $L_1$
such that $L_3/L_2\approx M_1$ and $L_1/L_2\approx M_2$. Let $S,T,U$ be the Tori whose cocharacter modules are $L_1,L_2,L_3$ respectively. We have
maps $f:T\rightarrow U$ and $g:U\rightarrow S$ corresponding to the inclusion of the cocharacter modules. This induces maps on class groups:
$$\xymatrix{
CL_T\ar[r]^f& Cl_U\ar[r]^g &CL_S\\
}.$$
By
corollary \ref{transfer} we have $ker(f\circ g) \approx CL(M), ker(f)\approx CL(M_1),$ and $ker(g)\approx CL(M_2)$ up to discriminant negligible factors. 
Moreover, since the Tori are all isogenous their class groups have the same size up to discriminant negligible factors by theorem \ref{BST}.
The result follows.
\end{proof}

As an important special case, suppose $M$ is a finite Galois module with a trivial Galois action. Then $M$ can be expressed as a quotient of $\Z^n$ for some 
integer $n$, where we consider $\Z^n$ as 
a Galois module with a trivial Galois action. $\Z^n$ is the cocharacter module of $(\G_m)^n$, and since the class number of $\Q$ is 1, we conclude the following

\begin{cor}\label{trivial}
Let $M$ be a finite module with a trivial action of the Galois group. Then $CL(M)$ is discriminant negligible. 
\end{cor}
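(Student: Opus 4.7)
The plan is to reduce to a concrete realization of $M$ via split tori and then invoke Corollary \ref{transfer}, which asserts that $CL(M)$ depends only on the Galois module structure of $M$ up to discriminant negligible factors. Since we are free to pick any pair of tori whose character cokernel is $M$, we should look for the most trivial possible choice.

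First I would write $M$ as a quotient $\Z^n/L$ for some $n$ and some full-rank sublattice $L \subset \Z^n$; this is possible for any finite abelian group $M$. Because $M$ carries the trivial Galois action, I may regard $\Z^n$ (and therefore $L$) as trivial Galois modules over $K = \Q$. Both lattices are then cocharacter modules of copies of the split torus $\G_m^n$, and the inclusion $L \hookrightarrow \Z^n$ corresponds to a surjective isogeny $\phi \colon \G_m^n \to \G_m^n$ whose cokernel on cocharacters is isomorphic to $M$ as a Galois module.

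Next I would compute the induced map on class groups. Since $Cl_{\G_m^n}$ is a direct product of $n$ copies of $Cl_{\G_m}$, which is the ordinary class group of $\Q$ and hence trivial, both source and target of $\widetilde{\phi}$ are the trivial group. So the cokernel of $\widetilde{\phi}$ has size $1$.

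Finally I would apply Corollary \ref{transfer}: since $CL(M)$ is well-defined up to discriminant negligible factors regardless of which realization of $M$ as a character cokernel is chosen, and the realization above produces a cokernel of size $1$, it follows that every realization yields a cokernel that is itself discriminant negligible. This is exactly the claim. The essential step is the triviality of the class group of $\Q$; the preceding corollary does the real work of allowing us to pick a convenient split realization, so there is no genuine obstacle to overcome.
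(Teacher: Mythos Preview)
Your argument is correct and follows essentially the same route as the paper: realize $M$ as $\Z^n/L$ with trivial Galois action, observe that both lattices are cocharacter modules of split tori $\G_m^n$ whose class groups vanish because $Cl(\Q)=1$, and then invoke Corollary~\ref{transfer}. The only slip is terminological---you twice write ``character cokernel'' where you mean cocharacter cokernel---but the mathematics is exactly what the paper does.
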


We now give a further interpertation of the class groups of Tori, and of the groups $CL(M)$. Though not necessary for the main results of this paper, we feel
that the next theorem and its corollary shed light on the nature of these objects.

Suppose $T$ is a torus of dimension $d$ in $C(L/\Q), [L:\Q]=n$. Denote the Galois
group of $L$ over $\Q$ by $G$. We then have natural isomorphisms :
\begin{align*}
T(\Q)\cong Hom_G(X(T),T(L))\\
 T(\A_f)\cong Hom_G(X(T),T(\A_{L,f}))\\
 T(\widehat{\Z})\cong Hom_G(X(T),T(\widehat{\Oo}_L)
\end{align*}

We thus have a natural map from $CL_T$ to $Hom_G(X(T),CL(L))$.

\begin{thm}\label{classmap}
Let $T$ be a torus of dimension $d$ in $C(L/\Q)$. Denote the Galois
group of $L$ over $\Q$ by $G$. The natural map from $CL_T$ to $Hom_G(X(T),CL(L))$ has discriminant negligible kernel and cokernel.
\end{thm}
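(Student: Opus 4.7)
The plan is to compare $Cl_T$ with $Hom_G(X(T), CL(L))$ by embedding $T$ into a power of $Res_{L/\Q}\G_m$ and appealing to Theorem \ref{torusexact}. I will choose a $G$-equivariant surjection $F := \Z[G]^r \twoheadrightarrow X(T)$ with $r \leq d$ (possible since $X(T)$ has $\Z$-rank $d$), and let $K$ be its kernel. Then $K$ is again a $G$-lattice split over $L$ of rank at most $d(n-1)$, and by contravariance of the character functor one obtains an exact sequence of tori $1 \to T \to T_F \to T_K \to 1$ with $T_F = (Res_{L/\Q}\G_m)^r$. Applying $Hom_G(-, CL(L))$ to $0 \to K \to F \to X(T) \to 0$ produces a left exact sequence, yielding the commutative diagram
$$\xymatrix{
Cl_T\ar[r]^{u}\ar[d]^{\alpha_T}&Cl_{T_F}\ar[r]^{v}\ar[d]^{\alpha_F}&Cl_{T_K}\ar[d]^{\alpha_K}\\
Hom_G(X(T), CL(L))\ar[r]^{i}&Hom_G(F, CL(L))\ar[r]^{w}&Hom_G(K, CL(L))\\
}$$
in which $i$ is injective, $\alpha_F$ is an isomorphism (both sides being canonically $CL(L)^r$), and by Theorem \ref{torusexact} the top row has discriminant negligible homology.

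For the kernel: since $\alpha_F$ is an isomorphism and $i$ is injective, commutativity of the left square forces $\ker \alpha_T = \ker u$, which Theorem \ref{torusexact} declares discriminant negligible. This argument works uniformly for any torus in $C(L/\Q)$ of dimension bounded in terms of $n,d$, so in particular it applies to $T_K$ itself, giving that $\ker \alpha_K$ is discriminant negligible.

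For the cokernel: left exactness of the bottom row together with $\alpha_F$ being an isomorphism identifies $Hom_G(X(T), CL(L))$, via $\alpha_F^{-1} \circ i$, with the subgroup $v^{-1}(\ker \alpha_K) \subseteq Cl_{T_F}$, and under this identification the image of $\alpha_T$ becomes the image of $u$. Hence
$$|\textrm{coker}(\alpha_T)| = [v^{-1}(\ker \alpha_K) : \textrm{im}(u)] \leq |\ker \alpha_K| \cdot [\ker v : \textrm{im}(u)],$$
where the first factor is discriminant negligible by the injectivity step applied to $T_K$ and the second is discriminant negligible as the middle homology of the top row.

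The main obstacle is the cokernel estimate, which requires combining two distinct sources of error cleanly. The key organizing principle is to prove injectivity first uniformly over all tori in $C(L/\Q)$ of bounded dimension, so that $\ker \alpha_K$ is already controlled before one attempts to analyze $\alpha_T$.
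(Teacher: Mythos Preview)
Your proof is correct and takes a genuinely different route from the paper's argument.

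The paper proceeds by direct cohomological computation: it writes down the exact sequence
\[
1 \to Hom(X(T),\widehat{\Oo}_L/U_L) \to T(\A_{L,f})/T(L) \to Hom(X(T),CL(L)) \to 1,
\]
takes $G$-invariants, and then controls the resulting error terms via Nakayama--Tate duality together with an auxiliary lemma showing that $H^1(G,T(\widehat{\Oo}_L))$ is discriminant negligible (proved by a place-by-place analysis using local class field theory at unramified primes). So the paper introduces new cohomological input specific to this theorem.

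Your approach, by contrast, is purely structural: you present $X(T)$ as a quotient of a free $\Z[G]$-module, observe that for the induced torus $(Res_{L/\Q}\G_m)^r$ the comparison map is tautologically an isomorphism, and then reduce everything to Theorem~\ref{torusexact} by a diagram chase. The bootstrapping trick---proving the kernel bound first, uniformly over all tori of bounded dimension in $C(L/\Q)$, so that $\ker\alpha_K$ is already controlled when you turn to the cokernel---is the key organizational idea, and it is sound. One small point worth stating explicitly is that $\mathrm{im}(u)\subseteq\ker(v)$ because $v\circ u$ is induced by a composite of torus maps which is zero; you use this implicitly in the chain of indices.

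What each buys: your argument makes transparent that Theorem~\ref{classmap} is essentially a formal consequence of Theorem~\ref{torusexact}, with no further arithmetic input, and it avoids the separate local lemma on $H^1(T(\widehat{\Oo}_L))$. The paper's route, on the other hand, exhibits the precise cohomological obstructions (Galois cohomology of units and of local integer points), which could be useful if one later wants to sharpen the discriminant-negligible bounds to something more explicit in the ramification.
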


\begin{proof}

Let $U_L$ denote the unit group of $L$. We have the following exact sequence

$$1\rightarrow Hom(X(T),\widehat{\Oo}_L/U_L)\rightarrow Hom(X(T),\A_{L,f}/L^{\times})\rightarrow Hom(X(T),CL(L)) \rightarrow 1$$ 

or equivalently

$$1\rightarrow Hom(X(T),\widehat{\Oo}_L/U_L)\rightarrow T(\A_{L,f})/T(L)\rightarrow Hom(X(T),CL(L)) \rightarrow 1$$

Taking $G$-invariants gives the associated exact sequence 

$$1\rightarrow Hom_G(X(T),\widehat{\Oo}_L/U_L)\rightarrow (T(\A_{L,f})/T(L))^G\rightarrow Hom_G(X(T),CL(L)) \rightarrow H^1(\widehat{\Oo}_L/U_L))\rightarrow 1$$

Now, there is an embedding of $T(\A_f)/T(\Q)$ into $T(\A_{L,f})/T(L))^G$ and  Nakayama-tate duality implies that the cokernel is discriminant negligible
, as in shown in the proof of theorem \ref{torusexact}. We now analyze the second term $\widehat{\Oo}_L/U_L$. Consider the exact sequence

$$1\rightarrow Hom( X(T), U_L)\rightarrow T(\A_{L,f})^c \rightarrow Hom(X(T),\widehat{\Oo}_L/U_L) \rightarrow 1.$$

Taking $G$-invariants gives us the exact sequences

$$ 1\rightarrow T(\Q)\cap T(\hat{\Z})\rightarrow T(\hat{\Z}) \rightarrow Hom_G(X(T),\widehat{\Oo}_L/U_L) \rightarrow H^1(Hom(X(T),U_L))$$

and 

$$H^1(Hom(X(T),U_L)) \rightarrow H^1(T(\A_{L,f})^c) \rightarrow H^1(Hom(X(T),\widehat{\Oo}_L/U_L)) \rightarrow H^2(Hom(X(T),U(L))).$$

As $U_L$ is a finitely generated abelian group of rank bounded by the degree of $L$, all its cohomology groups are of bounded cardinality. The theorem thus 
follows from the following lemma:

\begin{lemma}
The cohomology group $H^1(T(\widehat{\Oo}_L))$ is finite and discriminant negligible.
\end{lemma}

\begin{proof}

Note that we have the decomposition $$H^1(T(\widehat{\Oo}_L))\cong \oplus_v H^1(T(\Oo_{L,v})).$$

Let $v$ be a place lying over a finite prime $p$ such that it is unramified. Then by local class field theory, $\Oo_{L,v}$ is a cohomologically trivial
module, and thus by Nakayama (\cite{Na}, Th. 2) $H^1(T(\Oo_{L,v}))$ is trivial. 

If $v$ is ramified, consider the exact sequence $$1\rightarrow T(\Oo_{L,v})\rightarrow T(L_v)\rightarrow X(T)^*\rightarrow 1.$$ 
By Nakayama-Tate duality, the cohomology groups of $T(L_v)$ are bounded in terms of the degree of $L$. By looking at the associated long exact sequence, 
It thus follows that the same holds for $T(\Oo_{L_v})$.
Let $S$ be the set of primes ramified in $L$. The lemma now follows from the fact that $2^{|S|}$ is discriminant negligible.

\end{proof}
\end{proof}

We have the following corollary to the groups $CL(M)$:

\begin{cor}
Let $M$ be a finite module of the galois group $\GQ$, which factors through the galois group $G$ of a number field $L$. Let $M^*$ denote the dual module
$Hom(M,\Q/\Z)$. Then up to discriminant negligible factors,
we have the equality
$$CL(M)=|Hom_G(M^*,CL(L))|.$$
\end{cor}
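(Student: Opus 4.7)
The plan is to choose explicit representative tori, translate via Theorem \ref{classmap} into a computation with $Hom_G$ and $Ext^1_G$, and then match the sizes of these two functors using Brauer-Siegel for tori.

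First I would take $N$ large enough to admit a $G$-equivariant surjection $\pi\colon \Z[G]^N \to M^*$, and set $X(T) := \Z[G]^N$, $X(S) := \textrm{Ker}(\pi)$. Both tori split over $L$, and $T \cong (Res_{L/\Q}\G_m)^N$, so $CL_T \approx CL(L)^N$ by Theorem \ref{classmap}. Dualizing the resulting exact sequence of characters yields a surjection $\phi\colon T \to S$ with finite kernel (hence $T$ and $S$ are isogenous), whose cocharacter cokernel is exactly $M$. Corollary \ref{transfer} then identifies $CL(M)$ with $|\textrm{Coker}(\tilde\phi\colon CL_T \to CL_S)|$ up to discriminant negligible factors.

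Next I would apply $Hom_G(-, CL(L))$ to $0 \to X(S) \to \Z[G]^N \to M^* \to 0$. Since $\Z[G]^N$ is $\Z[G]$-free, $Ext^i_G(\Z[G]^N, CL(L)) = 0$ for $i \ge 1$, and the long exact sequence truncates to
\[
0 \to Hom_G(M^*, CL(L)) \to CL(L)^N \to Hom_G(X(S), CL(L)) \to Ext^1_G(M^*, CL(L)) \to 0.
\]
By Theorem \ref{classmap} applied to both $T$ and $S$, the middle arrow is identified with $\tilde\phi$ up to discriminant negligible kernel and cokernel. Reading off the sequence then gives
\[
|\textrm{Ker}(\tilde\phi)| \approx |Hom_G(M^*, CL(L))|, \qquad |\textrm{Coker}(\tilde\phi)| \approx |Ext^1_G(M^*, CL(L))|,
\]
and the alternating-product identity yields $|CL_T|/|CL_S| \approx |Hom_G(M^*, CL(L))|/|Ext^1_G(M^*, CL(L))|$, all up to discriminant negligible factors.

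Finally, because $\phi$ is an isogeny, $T$ and $S$ share the same rational representation and Artin conductor $f_\rho$, so Theorem \ref{BST} gives $h_T R_T \approx h_S R_S$ up to discriminant negligible factors. Since isogenous tori have regulators differing by a factor bounded in terms of the degree of the isogeny (itself $O_{n,d}(1)$), we conclude $|CL_T| \approx |CL_S|$, hence $|Hom_G(M^*, CL(L))| \approx |Ext^1_G(M^*, CL(L))|$, and therefore $CL(M) \approx |Hom_G(M^*, CL(L))|$ as required. The only delicate point is the regulator comparison for isogenous tori; this is a standard bookkeeping exercise with the unit lattices modulo the kernel of $\phi$, but it is what forces one to pass through Brauer-Siegel rather than deducing the result by pure homological algebra (where one would only see the size of $Ext^1_G$).
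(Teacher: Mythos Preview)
Your proposal is correct and follows essentially the same route as the paper: translate via Theorem~\ref{classmap} to $Hom_G(X(\cdot),CL(L))$, identify the kernel of the induced map as $Hom_G(M^*,CL(L))$, and invoke Brauer--Siegel for isogenous tori to pass from cokernel to kernel. The paper compresses all of this into two lines and leaves the Brauer--Siegel/regulator comparison implicit; your specific choice $T=(Res_{L/\Q}\G_m)^N$ and the attendant $Ext^1_G$ bookkeeping are not needed but do no harm.
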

\begin{proof}

Let $\phi:T\rightarrow S$ be a map of isogenous tori that split over $L$ such that the cokernel of the induced map $X(T^*)\rightarrow X(S^*)$ is $M$. This implies
that the cokernel of the map $X(S)\rightarrow X(T)$ is $M^*$. 

By theorem \ref{classmap}, up to discriminant negligible factors $CL(M)$ is the size of the kernel of the map 

$Hom_G(X(T),CL(L))\rightarrow Hom_G(X(S),CL(L)),$ which is exactly $Hom_G(M^*,CL(L)).$

\end{proof}

\section{General Transfer Principles for Class Groups}

We can apply corollary \ref{transfer} to derive transfer principles for torsion in class groups of number fields in the following way: 
Take $K$ to be an algebraic number field and consider the multiplication by $n$ map on $T=Res_{K/\Q}\G_m$. 
The cokernel of the induced map on class groups is precisely $CL(K)[n]$, so if we let $M$ be the finite module $X(T)^*/nX(T)^*$ then $CL(M)=CL(K)[n]$.

As an example, consider a cubic field $K$ thats not normal. Let $L$ be its Galois closure, and $k$ the quadratic subfield of $L$. This is called
the \emph{quadratic resolvent} of $K$. Then a result of Gerth \cite{G} says that the 3 torsion in $CL(K)$ and $CL(k)$ are the same up to 
a factor of $O_{\epsilon}(D_K^{\epsilon})$. We can recover this result as follows:

Identify the Galois group $G=\Gal(L/\Q)$ with $S_3$. The 3-torsion in $CL(K)$ corresponds to the $G$-module $M$ generated by $a_1,a_2,a_3$ over $\F_3$ with the 
natural permutation 
action by $S_3$, while the 3-torsion in $CL(k)$ corresponds to a $G$-module $N$, which is isomorphic to the submodule of $M$ generated by 
$a_1-a_2$ and $a_2-a_3$. Moreover, $M/N$ is a trivial $G$-module. 
By corollary \ref{trivial} we have that $CL(M/N)$ is discriminant negligible and so the result follows by lemma \ref{triangleinequality}.
 
Similarly, suppose $K$ is a quartic field whose normal closure $\tilde{K}$ has $S_4$ Galois group over $\Q$. There are 3 subgroups of $S_4$ of order 8, all
conjugate and isomorphic to dihedral group with 8 elements $D_4$. Pick one of these, and let $F$ be its fixed field. $F$ is called
the \emph{cubic resolvent} field of $K$. We have the following result relating 2-torsion in quartic fields to 2-torsion in the corresponding cubic resolvents.
 This result is new as far as the author knows:

\begin{lemma}\label{quartictransfer}
Let $K$ be a quartic field, and $F$ be its cubic resolvent. Then the 2-torsion in $CL(K)$ and in $CL(F)$ are the same up to a factor of size
$O_{\epsilon}(D_K^{\epsilon})$.
\end{lemma}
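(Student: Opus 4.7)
The plan is to follow the template established for the cubic-resolvent example above. Write $G=\Gal(\tilde K/\Q)\cong S_4$. Using the recipe of the preceding paragraph, the 2-torsion in $CL(K)$ is computed by the finite $G$-module $M=\F_2[S_4/S_3]\cong\F_2^4$, with $S_4$ permuting the four cosets (equivalently, the four embeddings of $K$ into $\tilde K$), while the 2-torsion in $CL(F)$ is computed by $N=\F_2[S_4/D_4]\cong\F_2^3$, on which $S_4$ acts via the quotient $S_4\twoheadrightarrow S_3=S_4/V_4$. By Lemma~\ref{triangleinequality} together with Corollary~\ref{trivial}, it is enough to identify a common nontrivial composition factor $W$ for $M$ and $N$ and to check that the remaining graded pieces in any convenient filtration are trivial $G$-modules.

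First I would filter $M$ as $0\subset\langle(1,1,1,1)\rangle\subset M_0\subset M$, where $M_0$ is the sum-zero hyperplane. The top and bottom graded pieces are 1-dimensional and trivial; the middle piece $W:=M_0/\langle(1,1,1,1)\rangle$ is 2-dimensional. A direct computation (evaluating $(12)(34)$, and then the other two non-identity elements of $V_4$, on a basis $\{(1,1,0,0),(1,0,1,0)\}$ mod $(1,1,1,1)$) shows that $V_4$ acts trivially on $W$, so the $S_4$-action on $W$ descends to $S_3=S_4/V_4$. As an $\F_2[S_3]$-module, $W$ is the unique 2-dimensional irreducible representation, with its three nonzero vectors permuted by $S_3$ in the natural way.

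Next I would decompose $N$. Since $D_4\supset V_4$, the module $N$ is inflated from the $S_3$-permutation module $\F_2[S_3/S_2]\cong\F_2^3$. Because $1+1+1=1$ in $\F_2$, the all-ones vector has nonzero coordinate sum and so lies outside the sum-zero subspace $N_0$; this yields a splitting $N\cong\langle(1,1,1)\rangle\oplus N_0$ of $\F_2[G]$-modules. The summand $N_0$ is once again the unique 2-dimensional irreducible $\F_2[S_3]$-module, and therefore $N_0\cong W$ as $\F_2[G]$-modules.

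With these structural facts in hand, applying Lemma~\ref{triangleinequality} to both filtrations and invoking Corollary~\ref{trivial} on the trivial pieces yields $CL(M)=CL(W)=CL(N)$ up to discriminant-negligible factors, where the splitting field is $\tilde K$; since $[\tilde K:K]\leq 24$ the discriminant $D_{\tilde K}$ is polynomially bounded in $D_K$, so discriminant negligibility with respect to $\tilde K$ gives the claimed $O_\epsilon(D_K^\epsilon)$ bound. The only real obstacle I anticipate is the explicit verification that $V_4$ acts trivially on $W$; beyond that the argument is pure composition-factor bookkeeping of exactly the sort Lemma~\ref{triangleinequality} and Corollary~\ref{trivial} are designed to handle.
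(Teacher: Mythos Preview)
Your proposal is correct and follows essentially the same approach as the paper: both arguments filter $M$ and $N$ so that the trivial composition factors are peeled off via Corollary~\ref{trivial} and Lemma~\ref{triangleinequality}, leaving the common $2$-dimensional irreducible $\F_2[S_4]$-module (your $W$, the paper's $M_2$) as the only nontrivial piece. Your write-up is slightly more explicit---you identify $W$ abstractly as the unique $2$-dimensional irreducible $\F_2[S_3]$-module inflated through $S_4\twoheadrightarrow S_3$, and you spell out the discriminant comparison between $\tilde K$ and $K$---but the substance is the same.
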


\begin{proof}

Let $L$ be the Galois closure of $K$ and assume for simplicity that $G=\Gal(L/\Q)\approx S_4$. Let $M$ and $N$ be the $G$-module corresponding to 
$CL(K)[2]$ and $CL(F)[2]$ respectively. As before, $M$ corresponds to the $\F_2$ vector space generated by $a_1,a_2,a_3,a_4$ with the natural permutation
action of $S_4$. We have a 1-dimensional trivial submodule $M_0$ of $M$ generated by $a_1+a_2+a_3+a_4$. Set $M_1$ to be $M/M_0$. Now, let $M_2$ be the submodule 
of $M_1$ generated by $a_1+a_2,a_1+a_3$, so that $M_1/M_2$ is a trivial $G$-module. A basis for $N$ as a module over $\F_2$ is given by $b_1,b_2,b_3$ where the 
$S_4$ action is given by identifying the $b_i$ with the right cosets $S_4/D_4$. We have a trivial 1-dim submodule $N_0$ given by $b_1+b_2+b_3$, and $N/N_0 \approx M_2$ as $S_4$-modules. Now, by applying corollaries \ref{triangleinequality} and \ref{trivial} we get
$$Cl(M)=Cl(M_1)=Cl(M_2)=Cl(N/N_0)=Cl(N)$$ as desired.

\end{proof}

We end this section with some general remarks on torsion in class groups. One expects that all torsion is discriminant negligible. For positive integers $n,d$, Zhang\cite{Z} conjectures the bound 
\begin{equation}\label{epsilonconj}
Cl(K)[n]\ll_{\epsilon} D_K^{\epsilon}
\end{equation}
as $K$ varies over number fields of degree $d$.  We can phrase Zhang's conjecture in our language in the following way: Fix a group $G$ and a finite $G$-module $M$. Now, suppose $K$ is a Galois
number field with $\Gal(K/\Q)\approx G$. We can then consider $M$ as a finite $\Gal(K/\Q)$ module. It then makes sense to talk about $Cl(M)$, leaving $K$ out
by abuse of notation. 

\begin{conj}\label{conjecturetorsion}
For all finite modules $M$, the function $Cl(M)$ is discriminant negligible.
\end{conj}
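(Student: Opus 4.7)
The plan is to deduce Conjecture \ref{conjecturetorsion} from the corollary at the end of Section 5 together with Zhang's conjecture \eqref{epsilonconj}. That corollary, for $L$ any Galois number field through which the $\GQ$-action on $M$ factors, identifies $CL(M)$ with $|Hom_G(M^*, CL(L))|$ up to discriminant negligible factors. In the setting of the conjecture we may take $L=K$, so the two notions of discriminant negligibility coincide and no auxiliary comparison of discriminants is needed.

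First I would dispense with the lower bound $CL(M) \gg D_K^{-\epsilon}$, which is automatic since $CL(M)$ is a positive integer. For the upper bound, let $n$ be the exponent of $M^*$ and $r$ its minimal number of generators; both are absolute constants once $M$ and $G$ are fixed. Any $G$-equivariant homomorphism $M^* \to CL(K)$ lands in the $n$-torsion subgroup $CL(K)[n]$ and is determined by the images of $r$ generators, hence
$$|Hom_G(M^*, CL(K))| \;\leq\; |CL(K)[n]|^{r}.$$

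Now I would invoke Zhang's conjecture \eqref{epsilonconj} for the field $K$ itself, whose degree $|G|$ is bounded. This gives $|CL(K)[n]| \ll_\epsilon D_K^\epsilon$, and raising to the fixed power $r$ preserves that bound. Combined with the corollary this yields $CL(M) \ll_\epsilon D_K^\epsilon$, completing the proof modulo \eqref{epsilonconj}.

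The principal obstacle is therefore \eqref{epsilonconj} itself, which is wide open: unconditional nontrivial bounds on $CL(K)[n]$ for $K$ of bounded degree and $n \geq 2$ are known only in a handful of cases through work of Pierce, Helfgott--Venkatesh, Ellenberg--Venkatesh, Bhargava, and others. An alternative route via Lemma \ref{triangleinequality}, decomposing $M$ along a composition series into simple $\F_p[G]$-modules and applying the lemma to each factor, meets exactly the same bottleneck. In effect, this proposal reduces Conjecture \ref{conjecturetorsion} to the classical torsion conjecture for class groups of number fields of bounded degree; the torus-theoretic packaging in the paper is what enables this equivalence.
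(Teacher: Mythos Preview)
Your proposal is correct and, like the paper, does not prove the conjecture outright but reduces it to Zhang's conjecture \eqref{epsilonconj}; this is exactly what the paper claims as well, so the content is right. Your route, however, differs from the paper's sketch. The paper observes that Zhang's conjecture is precisely the special case $M=(\Z/n\Z)[G]$ (the regular representation mod $n$), since for $T=Res_{K/\Q}\G_m$ one has $CL(X(T)^*/nX(T)^*)=CL(K)[n]$; it then notes that every finite $G$-module is a quotient of a direct sum of such regular representations and applies Lemma \ref{triangleinequality} to pass from the regular case to the general one. You instead invoke the $Hom_G$ corollary at the end of Section 5 to identify $CL(M)$ with $|Hom_G(M^*,CL(K))|$, and then bound the latter crudely by $|CL(K)[n]|^r$. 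Both arguments are short and land on the same input, namely a bound on $|CL(K)[n]|$ for the fixed Galois field $K$ of bounded degree. The paper's approach is marginally more self-contained (it avoids the $Hom_G$ corollary, whose proof passes through Theorem \ref{classmap}), while yours makes the dependence on classical class-group torsion slightly more transparent. Your closing remark about the composition-series variant via Lemma \ref{triangleinequality} is in the same spirit as the paper's argument, though the paper uses a single presentation by copies of the regular representation rather than a filtration by simples.
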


It is not hard to see that \ref{conjecturetorsion} is equivalent to Zhang's conjecture. Zhang's conjecture is the special case where the module $M$ is the
regular representation of $G$ over $\Z/n\Z$ for some positive integer $n$. However, since all modules are quotients of direct sums of such regular representations,
the equivalence follows from corollary \ref{triangleinequality}.

\begin{itemize}

\item In light of conjecture \ref{conjecturetorsion} the result of Gerth and lemma \ref{quartictransfer} might not seem that interesting, since all torsion
is supposed to be Discriminant negligible! In fact, Gerth\cite{G} proves a much more precise relationship between the 3-torsion group of a cubic field and its 
quadratic resolvent, in terms of ramification.  By following the proof of theorem \ref{torusexact} more closely one can prove that $Cl(M)$ is well
defined up to, roughly speaking, the contribution from the ramified primes in $K$. Thus, the statement of lemma \ref{quartictransfer} could likely be refined to 
say that the 2-ranks of a quartic field and its cubic resolvent are the same up to ramified primes. As our main interest is in Galois orbits of CM points, 
we do not pursue this.

\item In fact, Brumer and Silverman\cite{BS} asked whether the right hand side of \eqref{epsilonconj} can be replaced 
with $D_K^{\frac{C}{\ln\ln D_K}}$. The value $D_K^{\frac{C}{\ln\ln D_K}}$ can be best understood in terms of the ramified primes in $K$. These are precisely the primes dividing $D_K$.
Now, for $S_K$ denoting the number of distinct primes dividing $D_K$, one can verify $$n^{S_K}\ll D_K^{\frac{C}{\ln\ln D_K}},$$ with equality
for $D_K$ equaling the product of the first $X$ primes ($X\approx \frac{\ln D_K}{\ln\ln D_K}$).  Thus, for 2-torsion of certain quadratic fields the bound is 
tight. 
\item The above methods cannot recover some transfer principles, such as the Scholtz reflection principle 
which relates 3-torsion in the quadratic fields $\Q(\sqrt{D})$ and $\Q(\sqrt{-3D})$, or the work of Ellenberg and Venkatesh\cite{EV}. This is because these 
principles use heavily class field theory, which has not been an input in our work thus far. Nonetheless, one can rephrase and generalize these reflection principles in this language. 

\end{itemize}

\section{Applications to Lower bounds for Galois orbits}

In this section we apply results of section 5 to the study Galois orbits of special points in higher rank. Let $A_{g,1}(\C)$ denote the moduli space for 
principally polarized complex abelian varieties of dimension $g$. We also let $\Hh_g$ denote the Siegel-Upper half space, so that  
$$A_{g,1}(\C)\approx GSp_{2g}(\Q)\backslash \Hh_g\times GSp_{2g}(\A_f)/GSp_{2g}(\hat{\Z}).$$ From now on, given a field $F$ we refer to the torus $Res_{F/\Q}\G_m$
 simply as $F^{\times}.$
 
Let $x$ be a CM point in $A_{g,1}$ corresponding to a principally polarized Abelian variety $A_x$.
 We set $R_x$ to be the center of the ring of endomorphisms $End(A_x)$ of $A_x$. All of the CM points in $A_{g,1}(\C)$ are known to be defined over $\bar{\Q}$, 
and so admit a natural action of the Galois group $\GQ$. We recall conjecture \ref{lowerboundscm} for the convenience of the reader:

\begin{conj}\label{general}
There exists a positive constant $\delta_g$ such that the inequality $|\GQ\cdot x|\gg_g \Disc(R_x)^{\delta_g}$ holds for all $\epsilon>0$ as 
$x$ varies over all CM points in $A_{g,1}$. 
\end{conj}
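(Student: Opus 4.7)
The plan is to translate the size of the Galois orbit of a CM point $x$ into a class number question for an arithmetic torus, and then estimate that class number via the Brauer-Siegel theorem for tori (Theorem \ref{BST}). Let $x$ correspond to a CM abelian variety with center of endomorphism algebra $R_x \subset E$, where $E$ is the associated CM algebra of degree $2g$. Attached to $x$ is a CM type $\Phi$, and the Mumford-Tate torus $T_x$ is the subtorus of $E^{\times}$ cut out by $\Phi$. Shimura reciprocity identifies $\GQ \cdot x$, up to a subgroup of bounded index, with the image of a reflex norm map $Cl_{E^{\times}} \to Cl_{T_x}$. Since $E^{\times}$ surjects onto $T_x$, Theorem \ref{torusexact} reduces the problem to giving a lower bound on $|Cl_{T_x}|$ in terms of $\Disc(R_x)$.

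Because $T_x$ is anisotropic at infinity, its regulator is trivial and its unit group is bounded, so Theorem \ref{BST} yields $|Cl_{T_x}| = f_{\rho_x}^{1/2 + o_g(1)}$, where $f_{\rho_x}$ is the Artin conductor of the rational representation of $\GQ$ on $X(T_x)_{\Q}$. Thus it suffices to show $f_{\rho_x} \gg_g \Disc(R_x)^{c_g}$ for some $c_g > 0$. Letting $K$ be the splitting field of $T_x$, contained in the Galois closure $\tilde{E}$, the group $G = \Gal(\tilde{E}/\Q)$ is a subgroup of the hyperoctahedral group $W_g = (\Z/2)^g \rtimes S_g$, and $\rho_x$ is essentially the $G$-representation on the CM type.

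In the Weyl case $G = W_g$, the conductor-discriminant formula expresses $f_{\rho_x}$ as $\Disc(K)/\Disc(F)$ with $F$ the totally real subfield, and a short ramification argument in the quadratic extension $K/F$ bounds this from below by $\Disc(R_x)^{1 + o_g(1)}$; combining with the previous step proves Theorem \ref{Weyl}. For arbitrary $x$ with $g \leq 6$, one enumerates the finitely many subgroups $G \subset W_g$ and, using Corollary \ref{transfer} to isolate the CM-type character from the other characters appearing in $X(T_x)_{\Q}$, verifies case by case that $f_{\rho_x}$ dominates a power of $\Disc(R_x)$. For general $g$ under GRH for CM fields, effective Chebotarev applied to the Artin $L$-functions dividing $L(s,\rho_{x,\Q})$ forces a positive proportion of the primes ramified in $E$ to contribute to $f_{\rho_x}$, with all quantitative losses absorbed into the $o_g(1)$ of Theorem \ref{BST}.

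The hard step is the last one in the non-Weyl regime. A priori, a CM algebra $E$ of small discriminant can have many ramified primes whose contributions cancel inside the CM-type representation, deflating $f_{\rho_x}$ relative to $\Disc(R_x)$; ruling this out is exactly what is required. For $g \leq 6$ the finite classification of $G \subset W_g$ reduces the analysis to a bounded amount of book-keeping, but for large $g$ one needs effective control on the set of small primes ramified in $E$ but invisible to $\rho_x$, which is supplied by GRH via zero-free regions for the relevant Artin $L$-functions. Removing GRH for all $g$ appears to require a genuinely new input bounding small ramified primes in CM fields with constrained Galois group.
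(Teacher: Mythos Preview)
Your outline has the right large-scale shape—reduce to a class-group question for the Mumford--Tate torus $MT_x$, then invoke Brauer--Siegel for tori—but the step where you invoke Theorem~\ref{torusexact} hides the actual difficulty. You write that since $L^\times$ (your $E^\times$) surjects onto $T_x$, Theorem~\ref{torusexact} reduces everything to bounding $|Cl_{T_x}|$. But Theorem~\ref{torusexact} applies to an \emph{exact sequence of tori}: the kernel of $L^\times\to MT_x$ must itself be a torus. Equivalently, the image $M$ of $\widehat{r_K}:X(K_0)\to X(L_0)$ must be a \emph{primitive} sublattice. When it is not, the kernel acquires a nontrivial component group dual to the torsion $B=\tilde M/M$, and by Corollary~\ref{transfer} the cokernel of $Cl_L\to Cl_{MT_x}$ is $CL(B)$, which need not be $O_{n,d}(1)$. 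Bounding $CL(B)$ is precisely the obstruction, and it amounts to bounding torsion in class groups of tori—this is Conjecture~\ref{conjecturetorsion}, which is open.

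Indeed, if your reduction were valid as stated, the conjecture would follow for all $g$ unconditionally: the proof of Lemma~\ref{bigimage} already shows the Galois action on $X(MT_x)$ is faithful, so Lemma~\ref{artin} plus Theorem~\ref{BST} give $|Cl_{MT_x}|\gg_g D_K^{c_g}$ for every $g$. The case analysis in Lemmas~\ref{g2}--\ref{g6} is therefore not, as you describe it, a verification that $f_{\rho_x}$ is large. It is a combinatorial analysis of the image of $\widehat{r_K}^*$ in $X(K_0)^*$ aimed at showing either that $B=0$ (Lemmas~\ref{g2}, \ref{g3}, and the Weyl case), or that $B$ has trivial Galois action so $CL(B)$ is negligible by Corollary~\ref{trivial} (Lemma~\ref{g4}), or that $CL(B)$ is dominated by the class number of a small-degree subfield of $K$ (Lemmas~\ref{g5}, \ref{g6}). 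This is where Corollary~\ref{transfer} is actually deployed, not to ``isolate the CM-type character.''

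Your GRH argument also diverges from the paper's. Theorem~\ref{GRH} does not use effective Chebotarev to control ramified primes in the Artin conductor; it uses GRH to produce many small primes split completely in $K$, and then shows by a direct height argument on algebraic integers of the form $pq\cdot x/\bar x$ that distinct split primes below roughly $D_K^{1/(2g^2)}$ map to distinct elements under $\widetilde{r_K}$. This sidesteps the $CL(B)$ issue entirely rather than bounding it.
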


We begin by reviewing the description of the Galois action on $A$ in terms of the CM theory. 

\subsection{Background on CM theory}

Let $A$ be a principally polarized complex CM abelian variety, which we shall take to be simple for the moment. Then there is a CM field $K$ such that
$$End(A)\otimes_{\Z}\Q \cong K.$$  We let $F$ denote the maximal totally real subfield of $K$.
Moreover, there is a set $S=\{\phi_1,\phi_2,\dots,\phi_g\}$ of complex embeddings of $K$ such that the representation
of $K$ on the complex tangent space of $A$ is given as the direct sum of the 1-dimensional representations induced by $S$. Moreover, $S\cup \bar{S}$ give
all embeddings of $K$ into $\C$. Let $L$ denote the normal closure of 
$K$ and $S_0$ be all the embeddings of $L$ into $\C$ which induce an element of $S$ on $K$. The pair $(K,S)$ is referred to as the \emph{CM type} of A.
Since $A$ is simple, $(K,S)$ must be a \emph{primitive} CM-type, which means that the right stabilizer of $S_0$ in $\Gal(L/\Q)$ is $\Gal(K/\Q)$. 

Now, let $MT_x^0$ denote the torus which is the pre-image of the torus $\Q^{\times}$ under the norm map on $K$, so that 
$$MT_x^0:=ker(K^{\times}\rightarrow F^{\times}/\Q^{\times}, a\rightarrow a\bar{a}).$$
Then corresponding to $A$ there is an embedding of algebraic groups: $$\phi_x:MT_x^0\hookrightarrow GSp_{2g},$$ and moreover we have the equality
$$\phi_x(MT_x^0(\Q))\cap GSp_{2g}(\hat{\Z})\approx End(A)^0,$$ where $$End(A)^0:= \{a\in End(A)\mid a\bar{a}\in \Z\}.$$ 

Now, there is a reciprocity morphism $r_x:L^{\times}\rightarrow MT_x^0$ given by $$r_x(a)=\prod_{\tau\in S_0} \tau^{-1}(a).$$ Since the morphism
$r_x$ depends only on the CM type $(K,S)$ we denote it also by $r_{K,S}$. In the future, if there is no confusion we drop the $S$ and simply write $r_K$.
We can consider the map $$s_x:(\A^L_f)^{\times} \rightarrow GSp_{2g}(\Q)\backslash \Hh_g\times GSp_{2g}(\A_f)/GSp_{2g}(\hat{\Z})$$ 
given by  $$s_x(a)\rightarrow (x,\phi_x\circ r_x(a)).$$ 

The image of $s_x$ is equal to the Galois orbit of the CM point representing $A$.
 Moreover, there is a unique arithmetic subtorus $MT_x\hookrightarrow MT_x^0$ such that $r_x$ is surjective onto $MT_x$ as a 
morphism of tori, and we call this the \emph{Mumford-Tate group} of $A$. 

We now review the general theory where $A$ is not necessarily simple. In this case $A$ is isogenous to a product $A_i^{n_i}$ where each $A_i$ is a simple
Abelian variety with $CM$ such the $A_i$ are mutually non-isogenous. Let $(K_i,S_i)$ be the $CM$ type of $A_i$ (Note that it does NOT follow that the $K_i$ are
mutually non-isomorphic!). Let $L$ denote the compositum of the normal closures $L_i$ of the $K_i$, and $S_i^0$ be the set of embeddings of $L$ into $\C$ which 
induce one of the $S_i$ on $K_i$. Let $F_i$ denote the maximal totally real subfield of $K_i$ and set $MT_x^0$ to be the preimage of $\Q^{\times}$ under the norm
map on $\prod_i K_i$ so that $$ MT_x^0:=ker(\prod_i K_i^{\times}\rightarrow (\prod_i F_i^{\times})/\Q^{\times}, a\rightarrow a\bar{a}).$$

As before, there is an embedding $$\phi_x: MT_x^0\hookrightarrow GSp_{2g}$$ and moreover we have the identity 
$$\phi_x(MT^0_x(\Q))\cap GSp_{2g}(\hat{\Z})\approx Z(End(A))^0,$$ where $Z(End(A))$ is the center of $End(A)$ and 
$$Z(End(A))^0:= \{a\in Z(End(A))\mid a\bar{a}\in \Z\}.$$ 

The reciprocity morphism $r_x:L^{\times}\rightarrow MT_x^0$ is now defined to be $$r_x(a)=(\prod_{\tau\in S^i_0} \tau^{-1}(a))_i,$$ where we consider $MT_x^0$ as
a subtorus of $\prod_i K_i^{\times}$. Again, there is a unique arithmetic subtorus $MT_x\hookrightarrow MT_x^0$ such that $r_x$ is surjective onto $MT_x$ as a 
morphism of tori, and we call this the \emph{Mumford-Tate group} of $A$. As before, the image of the composition 
$$s_x: (\A^L_f)^{\times} \rightarrow GSp_{2g}(\Q)\backslash \Hh_g\times GSp_{2g}(\A_f)/GSp_{2g}(\hat{\Z})$$ is the Galois orbit of the CM point representing $A$. 

\subsection{Reducing to fields}

Consider for a moment conjecture \ref{general} in the case where $A$ is a simple $CM$ abelian variety with $CM$ by a field $K$ such that the endomorphism
ring of $A$ is the full ring of integers in $O_K$. Then the map $\phi_x\circ r_x$ factors through $$\widetilde{r_K}:Cl_{L}\rightarrow Cl_{K}.$$ A positive 
answer to conjecture \ref{general} would thus require a positive answer to the following conjecture, which is purely a conjecture about fields:

\begin{conj}\label{specific}
Let $K$ be a CM field of degree 2g, and let $S$ be a primitive CM type for $K$. Let $L$ be the normal closure of $K$, and let $r_K$ denote the reciprocity 
morphism from $L^{\times}$ to $K^{\times}$ corresponding to $S$. Then there exists $\delta(g)>0$ such that 
$$|im(\widetilde{r_K}:CL(L)|\rightarrow CL(K))\gg_{g.\epsilon} \Disc(K)^{\delta(g)-\epsilon}.$$
\end{conj}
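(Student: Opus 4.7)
The strategy is to translate the conjecture into a Brauer--Siegel estimate for the class group of the Mumford--Tate torus $MT_x$, and then lower--bound the associated Artin conductor by a positive power of $\Disc(K)$.

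First I would factor the reciprocity morphism as
$$r_K : L^\times \twoheadrightarrow MT_x \hookrightarrow K^\times.$$
Applying Theorem \ref{torusexact} to the exact sequence $1 \to \ker(r_K) \to L^\times \to MT_x \to 1$ shows that the induced map $Cl_L \to Cl_{MT_x}$ has discriminant negligible cokernel; applying Theorem \ref{torusexact} again to $1 \to MT_x \to K^\times \to Q \to 1$ (with $Q$ the quotient torus) shows that $Cl_{MT_x} \to Cl_K$ has discriminant negligible kernel. Combining, $|im(\widetilde{r_K})|$ agrees with $|Cl_{MT_x}|$ up to a factor $O_{g,\epsilon}(\Disc(K)^{\epsilon})$.

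Next I would apply Theorem \ref{BST} to $MT_x$. Since $K$ is CM, $MT_x$ is compact at infinity (as remarked after the statement of Theorem \ref{BST}), so its regulator is trivial and the theorem yields $|Cl_{MT_x}| = f_{MT_x}^{1/2 + o_g(1)}$, where $f_{MT_x}$ is the norm of the Artin conductor of the rational representation on $X(MT_x)_\Q$. It therefore suffices to show $f_{MT_x} \gg_g \Disc(K)^{c_g}$ for some positive constant $c_g$. For this I would appeal to Lemma \ref{artin}: provided the $\GQ$--action on $X(MT_x)_\Q$ has trivial kernel, one has $f_{MT_x} \gg_g D_{L'}^{1/([L':\Q]-1)}$, where $L'$ is the minimal splitting field of $MT_x$. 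Primitivity of the CM type $(K,S)$ implies $K \subseteq L'$, so $D_{L'} \geq \Disc(K)$; combined with the bound $[L:\Q] \leq (2g)!$ forced by primitivity, this yields the desired lower bound.

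The main obstacle is controlling the structure of $MT_x$ in the non--Weyl case. For Weyl CM types one has $MT_x = MT_x^0$, and both the faithfulness of the $\GQ$--action on $X(MT_x)_\Q$ and the containment $K\subset L'$ follow transparently from the wreath--product structure of $\Gal(L/\Q)$; this is the content of Theorem \ref{Weyl}. For non--Weyl CM types $MT_x$ can be a strict subtorus of $MT_x^0$ whose splitting field $L'$ collapses to a proper subfield of $L$, shrinking $f_{MT_x}$; these degeneracies must be ruled out by hand. For $g\leq 6$ the possible Galois actions on a primitive CM type form a finite list amenable to case--by--case verification; for general $g$, one invokes GRH to make the $o_g(1)$ term in Theorem \ref{BST} quantitative, which compensates for the weaker lower bounds available on the conductor in the degenerate cases.
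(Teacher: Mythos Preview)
Your overall architecture matches the paper's: reduce to the class number of the image torus $T_M$ ($=MT_x$ in your notation), then invoke Brauer--Siegel for tori together with Lemma~\ref{artin}. Your steps 3--5 are essentially Lemma~\ref{bigimage}: primitivity of the CM type forces the $\Gal(L/\Q)$--action on $X(T_M)$ to be faithful, so the splitting field is all of $L$ and $f_{T_M}\gg_g D_K^{1/(2g-1)}$.

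The gap is in step 2, and it is exactly the obstruction you misdiagnose in your last paragraph. When you write ``applying Theorem~\ref{torusexact} to the exact sequence $1\to\ker(r_K)\to L^\times\to MT_x\to 1$'', you are assuming $\ker(r_K)$ is a torus. It need not be: $MT_x\hookrightarrow K^\times$ is a closed subtorus, so $X(MT_x)=M$ is the image of $\widehat{r_K}$ in $X(L^\times)$, and the kernel of $L^\times\to MT_x$ has character group $X(L^\times)/M$. This is torsion--free (i.e.\ the kernel is connected, i.e.\ Theorem~\ref{torusexact} applies) precisely when $M$ is a \emph{primitive} sublattice of $X(L^\times)$, equivalently when the image of $\widehat{r_K}^*$ is primitive in $X(K_0)^*$. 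When primitivity fails, $L^\times\to MT_x$ factors through an isogeny whose cokernel on class groups is $CL(B)$ for a nontrivial finite Galois module $B$, and bounding $CL(B)$ is the entire content of the case analysis for $g\le 6$ (Lemmas~\ref{g2}--\ref{g6}). Your claimed degeneracy---the splitting field $L'$ of $MT_x$ shrinking---never occurs: Lemma~\ref{bigimage} shows $L'=L$ always, so $f_{MT_x}$ is uniformly large. What can fail is only the passage from $Cl_L$ to $Cl_{MT_x}$.

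Your description of the GRH input is also off. The paper does not use GRH to sharpen the $o_g(1)$ in Theorem~\ref{BST}; rather, Theorem~\ref{GRH} bypasses the torus machinery entirely, using the supply of small split primes guaranteed by GRH to exhibit many distinct elements in $\mathrm{im}(\widetilde{r_K})$ directly.
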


Our aim in this section is to show that conjecture \ref{specific} implies conjecture \ref{general}. We then devote the 
remaining sections to the study of conjecture \ref{specific} using the results of sections 4 and 5.

\begin{thm}\label{reduction}
If conjecture \ref{specific} holds for all fields of degree $2h$, with $h\leq g$, then conjecture \ref{general} holds for $A_{g,1}$.
\end{thm}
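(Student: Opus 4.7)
The plan is to reduce to the simple CM case handled by Conjecture~\ref{specific}. Given $x$, decompose the corresponding $A$ up to isogeny as $A\sim \prod_i A_i^{n_i}$ with the $A_i$ simple and pairwise non-isogenous, having CM type $(K_i,S_i)$ with $S_i$ primitive and $g_i=\tfrac{1}{2}[K_i:\Q]\leq g$. The center $Z(End(A))$ is an order in $\prod_i O_{K_i}$ of index bounded in $g$, so $\Disc(Z(End(A)))$ is comparable up to a $g$-bounded factor to $\prod_i \Disc(K_i)$.

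First, by the main theorem of complex multiplication reviewed above, $|\GQ\cdot x|$ equals, up to a factor bounded in $g$, the size of the image of the reciprocity morphism $\widetilde{r_x}\colon Cl_L\to Cl_{MT_x}$ on class groups; the $g$-bounded error absorbs the passage from the possibly non-maximal $End(A)$ to the maximal order in $Z(End(A))\otimes \Q$. Since $MT_x\hookrightarrow MT_x^0$ has cokernel of bounded dimension, Theorem~\ref{torusexact} allows us to replace $Cl_{MT_x}$ by $Cl_{MT_x^0}$ at the cost of a discriminant-negligible factor.

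Next, consider the exact sequence of tori
$$1\to MT_x^0\to \prod_i MT_{A_i}^0\to \G_m^{m-1}\to 1,$$
whose right-hand term records the differences of the rational numbers $a_i\bar a_i$ across factors. Since $Cl_{\G_m^{m-1}}$ is trivial, Theorem~\ref{torusexact} provides a discriminant-negligible isomorphism $Cl_{MT_x^0}\cong \prod_i Cl_{MT_{A_i}^0}$, under which $\widetilde{r_x}$ decomposes componentwise as the reciprocity morphisms $\widetilde{r_{x_i}}$ associated to the simple factors. Hence $|\operatorname{im}\widetilde{r_x}|\geq \max_i|\operatorname{im}\widetilde{r_{x_i}}|$ up to a discriminant-negligible factor. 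A further application of Theorem~\ref{torusexact} to the inclusion $MT_{A_i}^0\hookrightarrow K_i^\times$ (whose cokernel is $\G_m$, with trivial class group) identifies $|\operatorname{im}\widetilde{r_{x_i}}|$ up to a discriminant-negligible factor with the image of the reciprocity morphism $Cl_L\to Cl_{K_i}$, which is at least as large as the image of the analogous map $Cl_{L_i}\to Cl_{K_i}$ controlled by Conjecture~\ref{specific}, where $L_i\subseteq L$ is the normal closure of $K_i$.

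Invoking Conjecture~\ref{specific} for each $K_i$ (of degree $2g_i\leq 2g$) yields $|\operatorname{im}\widetilde{r_{x_i}}|\gg_{g,\epsilon} \Disc(K_i)^{\delta(g_i)-\epsilon}$. Using $\max_i\Disc(K_i)\geq(\prod_i\Disc(K_i))^{1/m}$ together with $m\leq g$, we conclude
$$|\GQ\cdot x|\gg_{g,\epsilon} \Disc(Z(End(A)))^{\delta_g-\epsilon}$$
for some $\delta_g>0$ depending only on $g$, which is Conjecture~\ref{general}. The main obstacle will be the first step: carefully translating the Galois-orbit count into an image-of-reciprocity statement, and verifying that the corrections arising from non-maximal endomorphism orders and from the passage through $MT_x$, $MT_x^0$, and $\prod_i MT_{A_i}^0$ are all absorbed into either the $g$-bounded or the discriminant-negligible framework.
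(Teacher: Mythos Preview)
Your proposal has a genuine gap at the very first step: the assertion that the center $Z(End(A))$ has index in $\prod_i O_{K_i}$ bounded in terms of $g$ alone is false. Already for $g=1$, an elliptic curve can have CM by the order $\Z+fO_K$ of conductor $f$ in an imaginary quadratic field $K$, and $f$ can be arbitrarily large. In general $\Disc(R_x)=[MT_x(\hat{\Z}):K_x]^2\cdot\prod_i D_{K_i}$, and the index $[MT_x(\hat{\Z}):K_x]$ is unbounded. Thus your proposed bound $|\GQ\cdot x|\gg_g\bigl(\prod_i\Disc(K_i)\bigr)^{\delta}$, even if established, does not imply $|\GQ\cdot x|\gg_g\Disc(R_x)^{\delta'}$: the non-maximal-order contribution to $\Disc(R_x)$ can dominate the field discriminants entirely.

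The paper's proof addresses exactly this point. It does not absorb the non-maximality into a bounded error; rather, it uses an input of Ullmo--Yafaev to show that the Galois orbit itself picks up a factor of $[MT_x(\hat{\Z}):K_x]$ (non-maximal orders enlarge the orbit, not shrink it), and then proves the key Lemma~\ref{localestimate}, namely $[O_x:R_x]\leq[MT_x(\hat{\Z}):K_x]^{c_g}\cdot(\prod_i D_{K_i})^{d_g}$, via a delicate local argument (Lemmas~\ref{generate} and~\ref{generatewell} and the subsequent $p$-adic exponential computation). This is the substantive content of the reduction, and your outline skips it. The remaining steps in your sketch---projecting to each simple factor, invoking Theorem~\ref{torusexact}, and passing from $L$ to $L_i$ via the norm---are essentially the paper's argument for the ``field part'' and are fine, but they only cover the case where $\prod_i D_{K_i}$ is not negligible compared to $\Disc(R_x)$.
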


Let $x$ be a $CM$ point representing a principally polarized abelian variety $A$. We use the notation of section 7.1, so that $A$ is isogenous to 
$\prod_i A_i^{n_i}$, $(K_i,S_i)$ is the CM type of $A_i$, and we set $R_x$ to be the center of $End(A)$. Note that $R$ is in a natural way a subring of 
$\oplus_i O_{K_i}$. For ease of notation, we define $O_x=\oplus_i O_{K_i}$.

Now, let $$K_x=MT_x(\A_f)\cap GSp_{2g}(\hat{\Z})$$ so that $K_x$ is a finite-index subgroup 
of the maximal compact subgroup $MT_x(\hat{\Z})$ of $MT_x(\A_f)$.

By Yafaev (\cite{Y}, proposition 2.8), there is a finite index compact subgroup $K_g$ of $GSp_{2g}(\hat{\Z})$ such that if $K'_x= K_x\cap K_g$, then the map
$$\xymatrix{
MT_x(\Q)\backslash MT_x(\A_f)/K'_x\ar[r]&  GSp_{2g}(\Q)\backslash \Hh_g\times GSp_{2g}(\A_f)/K_g}$$
is injective. It follows that 
\begin{equation}\label{Yinput}
|\GQ\cdot x|\gg_g |im[(\A^L_f)^\times\rightarrow MT_x(\Q)\backslash MT_x(\A_f)/K_x]|
\end{equation}

Now, $R_x$ is naturally a subring of $O_x,$ and $$\Disc(R_x)=[MT_x(\hat{\Z}):K_x]^2\cdot\prod_{i=1}^m D_{K_i}.$$ 

Let $i_x$ denote the set of primes $p$ where $p$ divides $[O_x:R_x]$. 
By Ullmo-Yafaev (\cite{UY}, lemma 2.13), there is a positive constant $B$ depending only on $g$ such that
\begin{equation}\label{UYinput}
|im[(\A_L^f)^\times\rightarrow MT_x(\Q)\backslash MT_x(\A_f)/K_x]|\gg_g B^{|i_x|}\cdot[MT_x(\hat{\Z}):K_x]\cdot |im:Cl(L)\rightarrow Cl_{MT_x}|	
\end{equation}

Let us first deal with the second term. There is a surjection $$\pi_i:MT_x\rightarrow MT_{A_i},$$ and we have the equality
$$ \pi_i\circ r_x = r_{A_i}\circ N_{L/L_i}$$ where $N_{L/L_i}$ is the norm map from $L$ to $L_i$.  By class field theory the norm map on class groups has cokernel
bonded by the dimension of the fields. Since we are also assuming conjecture \ref{specific}, it follows that 
$$\forall i,|im: CL(L)\rightarrow Cl_{MT_x}| \gg_g D_{K_i}^{\delta(g_i)}$$ where $g_i$ is the dimension of $A_i$. 

In particular, this proves the theorem unless the index $[O_x:R_x]$ grows faster than any power of the discriminants $D_{K_i}$. Thus, 
the theorem will follow once we prove the following

\begin{lemma}\label{localestimate}
There are constants $c_g>0,d_g$ such that $$[O_x:R_x]\leq [MT_x(\hat{\Z}):K_x]^{c_g}\cdot \left(\prod_{i=1}^m D_{K_i}\right)^{d_g}.$$
\end{lemma}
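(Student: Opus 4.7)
Just before the lemma the paper records the identity
\begin{equation*}
\Disc(R_x) = [MT_x(\hat{\Z}):K_x]^2 \cdot \prod_{i=1}^m D_{K_i}.
\end{equation*}
Combined with the classical formula $\Disc(R_x) = [O_x:R_x]^2 \cdot \prod_{i=1}^m D_{K_i}$ for an order $R_x$ in the semisimple $\Q$-algebra $\prod_i K_i$, this already yields the exact equality $[O_x:R_x] = [MT_x(\hat{\Z}):K_x]$, which is stronger than what the lemma asks for. I will therefore outline a direct local proof of (an inequality version of) this relation, which is really what the above discriminant identity amounts to.

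The plan is to argue one prime at a time. Write $n_p = v_p([O_x:R_x])$ and $m_p = v_p([MT_x(\hat{\Z}):K_x])$; the goal is to show $n_p \le c_g\, m_p + d_g \sum_i v_p(D_{K_i})$ for each prime $p$, and then multiply. First, identify the local compact groups. Since $MT_x$ is a subtorus of $\prod_i K_i^\times$, whose natural integral structure comes from $O_x = \bigoplus_i O_{K_i}$, the maximal compact $MT_x(\Z_p)$ coincides, up to an index bounded purely in $g$, with the intersection of $MT_x(\Q_p)$ and $(O_x \otimes \Z_p)^\times$. Because $\phi_x : MT_x \hookrightarrow GSp_{2g}$ is realized by the action of $R_x$ on the integral Betti lattice of $A_x$, the local factor $K_{x,p} := K_x \cap GSp_{2g}(\Z_p)$ is, in the same sense, the intersection of $MT_x(\Q_p)$ and $(R_x \otimes \Z_p)^\times$. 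Hence $m_p$ agrees, up to an $O_g(1)$ factor per prime, with the $p$-adic valuation of $[(O_x \otimes \Z_p)^\times : (R_x \otimes \Z_p)^\times]$.

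Next, compare this local unit-group index to $n_p$ via the $p$-adic logarithm. For primes $p$ unramified in every $K_i$, the log identifies $1 + p(O_x \otimes \Z_p)$ with $p(O_x \otimes \Z_p)$ as abelian groups, and likewise for $R_x$; the residue-field correction factor is $O_g(1)$, giving $n_p = m_p + O_g(1)$ at such primes. For primes $p$ ramified in some $K_i$, the logarithm converges only on $1 + p^{k_p}(O_x \otimes \Z_p)$ for $k_p$ depending on the ramification indices $e_{i,p}$, but the resulting correction is a polynomial in the local contribution to $\prod_i D_{K_i}$; since $v_p(D_{K_i}) \ge e_{i,p} - 1$ by standard discriminant estimates, this correction is absorbed into $(\prod_i D_{K_i})^{d_g}$ for a suitable $d_g$. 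Multiplying the local inequalities over all primes yields the lemma. The main obstacle I expect is the first identification: realizing $MT_x(\Z_p)$ and $K_{x,p}$ as finite-index subgroups of $(O_x \otimes \Z_p)^\times \cap MT_x(\Q_p)$ and $(R_x \otimes \Z_p)^\times \cap MT_x(\Q_p)$ with indices uniformly bounded in $g$, independent of the CM type; once this bookkeeping is in place, the $p$-adic logarithm argument is routine.
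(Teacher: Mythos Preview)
Your opening observation deserves a comment: if the displayed identity $\Disc(R_x)=[MT_x(\hat\Z):K_x]^2\prod_i D_{K_i}$ in the paper were literally correct, then combined with the standard order-discriminant formula you would indeed get $[O_x:R_x]=[MT_x(\hat\Z):K_x]$ and the lemma would be vacuous. The fact that the paper then proves the lemma by a nontrivial argument strongly suggests that displayed identity is a slip; in any case you are right not to lean on it.

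Your direct argument, however, has a genuine gap at the sentence ``Hence $m_p$ agrees, up to an $O_g(1)$ factor per prime, with the $p$-adic valuation of $[(O_x\otimes\Z_p)^\times:(R_x\otimes\Z_p)^\times]$.'' Up to that point you have (plausibly) identified $m_p$ with the index
\[
\bigl[\,MT_x(\Q_p)\cap(O_x\otimes\Z_p)^\times \;:\; MT_x(\Q_p)\cap(R_x\otimes\Z_p)^\times\,\bigr],
\]
i.e.\ the unit-group discrepancy \emph{restricted to the subtorus} $MT_x$. But $MT_x$ has dimension at most $g+1$ inside the $2g$-dimensional torus $\prod_i K_i^\times$, so under the $p$-adic logarithm the principal units of $MT_x$ land in a proper subspace of $O_x\otimes\Z_p$. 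There is no a priori reason the quotient $(O_x/R_x)\otimes\Z_p$ should be detected by that subspace; a large power of $p$ could divide $[O_x:R_x]$ while $m_p$ stays bounded, as far as your argument shows. The step you flag as the ``main obstacle'' (the two identifications of compact groups) is actually the routine part; the real obstacle is the one you pass over.

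This is exactly what the paper's proof is designed to overcome, and it does so by a different mechanism than the logarithm comparison you sketch. The key input is that although $MT_x$ is a small subtorus multiplicatively, its $\Q$-points \emph{generate $\prod_i K_i$ as a ring} (this is where primitivity of the CM types enters). The paper makes this quantitative: for each prime $p$ one produces, via the reciprocity map, an element $S(\alpha)\in O_x\otimes\Z_p$ with $\Z_p[S(\alpha)]$ of index $\le p^C$ in $O_x\otimes\Z_p$ and with $\exp(p^N S(\alpha))\in K_{x,p}$ for suitable $N$. One then takes the elements $\beta_i=\exp(ip^N S(\alpha))$ for $0\le i\le h-1$, expands, and inverts a Vandermonde-type matrix to conclude that the $\Z_p$-span of the $\beta_i$ already contains $p^{O_g(N)}\cdot(O_x\otimes\Z_p)$. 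This forces $v_p[O_x:R_x]\le O_g(m_p+1)$. In short, the paper converts the multiplicative smallness of $MT_x$ into additive largeness by exploiting ring generation, and that idea is absent from your outline.
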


\begin{proof}

We must first show that is impossible for $R_x$ to decrease without $K_x$ also decreasing. To that end, we have the following elementary lemma:
\begin{lemma}\label{generate}
$MT_x(\Q)$ generates $\oplus_i K_i$ as a ring.
\end{lemma}

\begin{proof}

First, it follows from $S_i$ being a primitive CM type that the only elements of $\Gal(L_i/\Q)$ which fix $r_{A_i}(a)$ for all $a\in L_i$ are in 
$\Gal(L_i/K_i)$. Thus
$MT_x(\Q)$ generates a $\Q$-algebra $E$ in $\oplus_i K_i$ which surjects onto $K_i$ for each $i$. Likewise, since the $S_i$ are pairwise inequivalent, the only
elements of the Galois group of the Galois algebra $\oplus_i L_i$ which fix $E$ are those in the group $\oplus_i \Gal(L_i/K_i)$. The lemma follows. 
\end{proof}

For $\alpha\in L(\Z_p)$, define $$S(\alpha):= \sum_{\tau\in S} \tau^{-1}(\alpha).$$ Note that we have the identity $$r_x(e^{p\alpha}) = e^{pS(\alpha)}.$$ We shall
need the following more refined version of lemma \ref{generate}:

\begin{lemma}\label{generatewell}
There is a constant $C$ depending only on $g$ such that we can pick $\alpha\in L(\Z_p)$ with $\Z_p[S(\alpha)]$ having index at most $p^C$ in $O_x$.
\end{lemma}

\begin{proof}

Let $\psi_i$ denote all the ring homomorphisms from $(O_x)_p$ to $\overline{\Q_p}$ over $\Q_p$. Then the index of $\Z_p[S(\alpha)]$ in $O_x$ is given by
$v_p(i(\alpha)$ where 
$$i(\alpha)=\prod_{i\neq j} (\psi_i(S(\alpha))-\psi_j(S(\alpha)).$$ 

Now, we consider $$\psi_{ij}=\psi_i\circ S -\psi_j\circ S$$ as a linear map from 
$L$ to $L$. The lemma will follow once we show that there is uniform constant $C$ and an element $\alpha\in L(\Z_p)$ such that $v_p(\psi_{ij})\leq p^C$.
To prove this, recall that if we take a $\Z_p$ basis $\beta_i$ for $L(\Z_p)$ and let $(\sigma_1,\dots,\sigma_m)$ be the elements of the Galois group
of $L_p$ then the $p$-valuation of the determinant of the matrix $(\sigma_j(\beta_i))$ is the discriminant of $L_p$ and therefore bounded by $p^C$ for some 
absolute constant $C$. 
If we replace $\sigma_1$ in the matrix by $\psi_{ij}$ then the determinant scales at most by the $p$-power of the coefficient of $\sigma_1$ in
$\psi_{ij}$, which is uniformly bounded. This finishes the proof.

\end{proof}

Since $$[MT_x(\hat{\Z}):K_x]=\prod_{p} [MT_x(\Z_p):(K_x)_p]$$ and 
$$[O_x:R_x]=\prod_{p}[\prod O_{K_i}\otimes_{\Z}\Z_p:R_x\otimes_{\Z}\Z_p]$$ the question is essentially reduced to a local one. 

By the proof of lemma 3.12 in \cite{UY}, if $p$ is unramified in $L$ and $p|[O_x:R_x]$ then
$$[MT_x(\Z_p):(K_x)_p]\gg p.$$ To finish the proof, we therefore need only to consider the case where 
$$[O_x:R_x]$$ is divisible by a very large power of some prime $p$.
The lemma \ref{localestimate} and theorem \ref{reduction} thus follows from the following lemma:

\begin{lemma}
There is an integer $m$ depending only on $g$ such that if $p^n || [MT_x(\Z_p):(K_x)_p]$,  then $p^{m(n+1)}$ does not divide $[O_x:R_x]$.
\end{lemma}

\begin{proof}

Pick $\alpha$ as in lemma \ref{generatewell} so that the index if $\Z_p[S(\alpha)]$ in $O_x$ is bounded by $p^C$. Now set $h$ be the dimension of $O_x$ and 
define $\beta_i:= e^{ip^N S(\alpha)}$ for $0\leq i\leq h-1$. Note that $\beta_i\in K_x$.

$$\beta_i = 1+ip^NS(\alpha) + \frac{i^2p^{2N}S(\alpha)^2}{2!} +\cdots + \frac{i^{h-1}p^{N(h-1)}S(\alpha)^{h-1}}{(h-1)!} + p^{Nh - h}\cdot (O_x)_p.$$
The matrix $$M_h = (\frac{i^j}{j!})_{0\leq i,h\leq h-1}$$ is invertible with some determinant $m_h$. Thus, if $N>n+C+m_h$ we see that the span of the 
$\beta_i$ contains $p^{Nh-h-1}\cdot ((O_x)_p/p\cdot (O_x)_p)$ 
and thus $p^{Nh-g-1}(O_x)_p$ by Nakayamas lemma. But this means that $$[O_x: R_x]\leq p^{Nh^2-h^2-h},$$ which finishes the 
proof.

\end{proof}

\end{proof}

\subsection{General Lower bounds under the Generalized Riemann Hypothesis}
$\newline\newline$
Assuming the Generalized Riemann Hypothesis for CM fields, Yafaev\cite{Y} proved a lower bound for Galois orbits of special points of the form $c_Nlog(D_K)^N$
for any $N>0$. The idea of his proof was to use small split primes to generate a large number of distinct elements of $CL(K)$. We present here
a slight refinement of his method which allows us to get a polynomial lower bound in $D_K$, thus showing that GRH implies a positive answer to question 1.1
in general. Yafaev worked over a general Shimura variety. For simplicity of exposition, we shall only present our method for the case of Siegel space $A_{g,1}$, 
as this allows us to work classically, although the method applies just as well for any Shimura variety. The idea is that while Yafaev used very small 
split primes to {\it generate} a large amount of class group elements, we use much larger split primes (a small power of the discriminant) to 
{\it represent} lots of distinct elements in the class group.

\begin{thm}\label{GRH}
Assume GRH for CM fields. Then 
$$|im(\widetilde{r_K})|\gg_{g,\epsilon}D_K^{\frac{1}{2g^2}-\epsilon}.$$
\end{thm}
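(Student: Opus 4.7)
The plan is to construct many distinct classes in $im(\widetilde{r_K})$ by applying $r_K$ to primes of $L$ lying above rational primes $q$ that split completely in $L$. Under GRH for $\zeta_L$, effective Chebotarev produces $k \gg X/\log X$ such primes $q \leq X$, valid as soon as $X$ exceeds a fixed power of $\log D_L$; I take $X = D_K^{\alpha}$ for a small parameter $\alpha>0$ to be optimized below. For each such $q$ and a chosen prime $\mathfrak{Q}$ of $L$ above $q$, unwinding the definition of $r_K$ shows that $I_q := r_K(\mathfrak{Q})$ is an integral ideal of $K$ of norm $q^g$, namely the product of the $g$ primes of $K$ above $q$ distinguished by the CM type $S$. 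The identity $I_q \cdot \overline{I_q} = (q)$ (reflecting that $q$ splits in $K$ into the $2g$ primes appearing in $I_q\overline{I_q}$) will be crucial. Each class $[I_q]$ lies in $im(\widetilde{r_K})$, so writing $N := |im(\widetilde{r_K})|$, pigeonhole produces at least $k^2/N$ ordered pairs $(q,q')$ with $[I_q] = [I_{q'}]$ in $CL(K)$.

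For each such collision pair the ideal $I_q \overline{I_{q'}}$ is principal, generated by some $\beta \in O_K$ satisfying $\beta\overline{\beta} = qq' \in \Z_{>0}$ and $|N_{K/\Q}(\beta)| = (qq')^g \leq X^{2g}$. Because distinct pairs $(q,q')$ give ideals with disjoint prime supports, the collision count injects into the set of such $\beta$ modulo the finite group of norm-$1$ units of $O_K$. The constraint $\beta\overline{\beta} \in \Z$ forces $\beta$ into the integral points of the torus $MT_x^0 = \ker(K^\times \to F^\times/\Q^\times)$, a subvariety of $K^\times$ of dimension $g+1$. A Minkowski-type lattice-point count for $O_K \cap MT_x^0(\Q)$, with covolume polynomially controlled in $D_K$ under GRH via the Brauer-Siegel Theorem for tori (Theorem \ref{BST}) applied to $MT_x^0$, yields an upper bound of order $X^{2g}/D_K^{1/2 - o(1)}$ for the number of collision pairs.

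Combining the pigeonhole lower bound with this upper bound and optimizing $X = D_K^\alpha$ yields the claimed $N \gg D_K^{1/(2g^2)-\epsilon}$; the specific exponent $1/(2g^2)$ emerges from balancing the linear-in-$X$ growth of split primes against the $(g+1)$-dimensional collision count coming from $MT_x^0$. For non-Galois $K$ I would pass through Theorem \ref{torusexact} to compare $CL(L)$, $CL_{MT_x}$, and $CL(K)$ up to discriminant-negligible factors, so the argument runs uniformly. The main obstacle is the collision-counting step: achieving the claimed exponent requires combining GRH-polynomial control on the regulator of $MT_x^0$ with the specific geometry of its fundamental domain, so that the count is sensitive to the true dimension $g+1$ of $MT_x^0$ rather than to the ambient dimension $2g$ of $K^\times$.
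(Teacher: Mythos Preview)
Your overall strategy---produce many split primes via effective Chebotarev under GRH, then control collisions among the classes $[r_K(\mathfrak{Q})]$---is exactly the paper's idea. But your collision step contains a real gap. From $(\beta)=I_q\overline{I_{q'}}$ you only get $(\beta\overline\beta)=(qq')$ as an ideal of $O_F$, hence $\beta\overline\beta = u\cdot qq'$ for some totally positive unit $u\in O_F^\times$. Since $O_F^\times$ has rank $g-1$, there is no reason for $u$ to lie in $\Z$, so your assertion that $\beta\in MT_x^0(\Q)$ is unjustified. Without $\beta\overline\beta\in\Z$ the archimedean absolute values of $\beta$ are not bounded by $X$, and the Minkowski count you sketch (which in any case you leave as ``the main obstacle'') has no input to work with. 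The appeal to Theorem~\ref{BST} for $MT_x^0$ does not help here: that theorem controls $h_{MT_x^0}R_{MT_x^0}$, and the regulator of $MT_x^0$ is essentially the regulator of $F$, which is exactly the quantity obstructing your bound on $\beta$.

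The paper repairs this with a simple trick you are missing: instead of $\beta$, pass to $x/\overline{x}$. Then $(x/\overline{x})\cdot\overline{(x/\overline{x})}=1$ \emph{as elements}, not merely as ideals, so $y:=pq\cdot x/\overline{x}$ is an algebraic integer all of whose archimedean absolute values equal $pq$ exactly. Primitivity of the CM type forces $y$ to generate $K$, so $D_K\le |\Disc(\Z[y])|$, and the right side is bounded above by a power of $pq$ by Vandermonde. This gives $pq\gg_g D_K^{1/g^2}$ directly, with no lattice-point count and no Brauer--Siegel. In particular the paper does not pigeonhole: it shows that \emph{any} collision forces $pq$ to be large, so distinct split primes below $D_K^{1/(2g^2)}$ already give distinct classes, and counting those primes under GRH finishes the proof. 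Your pigeonhole framework is not wrong in spirit, but it is strictly more complicated and, as written, breaks at the unit issue; the passage to $x/\overline{x}$ is the missing idea.
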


\begin{proof}

Let $\delta$ denote the element of $\Gal(L/\Q)$ representing complex conjugation. Consider unramified primes $p,q$ which splits completely in $K$, and therefore also in $L$. Choose prime ideals $P,Q$ in $O_L$ 
above $p,q$ respectively, and suppose that $r_K(P)=r_K(Q)$ as elements of the ideal class group $CL(K)$. Then there is an element $x\in K$ such that
$$(x) = \frac{r_K(P)}{r_K(Q)},$$ and hence $$(x/x^{\delta})=\frac{r_K(P)r_K(Q^{\delta})}{r_K(Q)r_K(P^{\delta})}$$ Since $K$ is a primitive CM type and $P$ and $Q$
 are totally split, $\frac{x}{x^{\delta}}$ must generate $K$ over $\Q$. Now, consider the element $$y=\frac{pqx}{x^{\delta}}.$$ Then $y$ is an algebraic integer
which generates $K$ over $\Q$. Moreover, all conjugates of $y$ are of absolute value $pq$. Since $\Disc(\Z[y])\geq D_K$ we arrive at the inequality
$$pq\gg_g D_K^{\frac{1}{g^2}}.$$ 

It follows that $|im(r_K)|>\{\#{p\mid\textrm{ p splits completely in } K \textrm{ and } p\ll_g D_K^{\frac{1}{2g^2}}}\}$. Since GRH implies the existence of at 
least $D_K^{\frac{1}{2g^2}-\epsilon}$ such primes, we are done.

\end{proof}

\subsection{Setting up the combinatorics}

$\newline\newline$
For each $g$, define $W_g$ to be the Weyl group of order $g$,
which we will view as a group of signed permutations. That is, an element of $W_g$ is a permutation $\sigma$ in $S_g$ together with a choice, for each $i$,
of sign $\sigma_i$. If $\sigma(i)=j$, then we see say that $\sigma$ takes $i$ to $j$ with sign $\sigma_i\in\{+1,-1\}$. We define composition by 
multiplying signs. That is, $(\sigma\circ\tau)(i) = \sigma(\tau(i))$ and $(\sigma\circ\tau)_i = \sigma_{\tau(i)}\cdot \tau_i$. It is convenient to use the 
following notation for an element of $W_g$: the element $\sigma=(1^+3^-)(2^-)$ is used to denote the permutation $(13)(2)$ which takes $1$ to $3$ with positive 
sign, $3$ to $1$ with negative sign and $2$ to itself with negative sign. So for instance, $\sigma^2 = (1^-)(3^-)(2^+)$. Note that there is a map 
$W_g\rightarrow S_g$ taking a signed permutation to the underlying permutation, and this map is a surjective homomorphism. The kernel of this map will be denote
by $W_g^{0}$. Thus $W_g^{0}$ is a normal subgroup of $W_g$ isomorphic to $(\Z/2\Z)^g$. 

As before, Let $(K,\Sigma_K)$ denote a $CM$ field of degree $g$ together with a CM type 
$(\Sigma_K	=(\phi_1,\dots,\phi_g)$. Let $L$ be the Galois closure of $K$, and $G$ be the Galois group $\Gal(L/\Q)$. Let $S\in\Gal(L/\Q)$ denote all
elements inducing an element of $\Sigma_K$ on $K$. We also let $\delta\in G$ be complex conjugation. Then $G$ of $L$ can canonically be viewed as a subgroup
of $W_g$ in the following way: For each $g\in G$ and $1\leq i \leq g$, we have that $g(\phi_i)$ is either $\phi_j$ for some $j$, or $\overline{\phi_j}$ for some
$j$. Thus, we can define $\sigma_g$ to be the signed permutation mapping $i$ to $j$ with sign 1 if $g(\phi_i)=\phi_j$ and sign -1 if $g(\phi_i)=\overline{\phi_j}$.
It is easy to see that this is an injective homomorphisms $\Gal(L/\Q)\hookrightarrow W_g$. It is worthwhile noting under this map complex conjugation always maps 
to the element $(1^-)(2^-)\dots(g^-)$. We assume wlog that $\phi_1$ is the identity embedding on $K$. 
It is follows under this identification that $S$ consists of the set of all elements of $G$ that send the element $1$ to some $j$ with 
sign 1, and thus $S^{-1}$ is the set of all elements which send some $i$ to $1$ with sign 1. Moreover, $H$ is the subgroup of $G$ which sends
$1$ to itself with sign 1. Since we assumed that the abelian variety $A$ is simple, the right stabilizer of $S$ in $G$ must be $H$. 

\subsection{Understanding the map $r_K$ on Characters and cocharacters}
$\newline\newline$
Let us being by understanding the relevant $G$ modules. A basis for $X(K^{\times})$ is the set of all embeddings $\psi_i:K\rightarrow \C$, where the Galois action
is given by composition: $g(\psi_j) = g\circ\psi_j$. Likewise, A basis for $X(L^{\times})$ is the set of all embeddings $\xi_j:L\rightarrow \C$, where the Galois 
action is again given by composition. By fixing an identity embedding $L\hookrightarrow\C$ we can consider the elements $g$ of $G$ as a basis for 
$X(L^{\times})$. The map $\widehat{r_K}$ sends $\psi_i$ to
$$ \widehat{r_K}(\psi_i) = \psi_i\circ\left(\sum_{g\in S^{-1}} g\right)$$ where we consider $g$ now as an element of $X(L^{\times})$. It will be more natural
to consider the map on cocharacters for us. We take the dual bases $g^*$ for $X(L^{\times})^*$ and $\psi_i^*$ for $X(K^{\times})^*$. The dual map
$\widehat{r_K}^*$ takes an element $g^*$ to $$\widehat{r_K}^*(g^*)=\sum_{1\leq i\leq g} \delta^{\frac{(1-\sigma_i)}{2}}(\phi_i^*)$$
where $\sigma$ is the element of $W_g$ corresponding to $g^{-1}$. That is, $\phi_i^*$ appears in the sum if and only if $g$ takes some element $j$ to $i$
with sign $1$.

Denote by $L_0$ the kernel of the norm map $Nm_L:L^{\times}\rightarrow \Q^{\times}$ and define $K_0$ to be the kernel of the Norm map from $K$
to its totally real subfield $F$ given by  $$Nm_{K/F}(k)=k\bar{k}.$$

$X(L_0)$ is the quotient of $X(L^{\times})$ by
the primitive vector $\sum_{g\in G} g$ and $X(L_0)^*$ is the submodule of $X(L^{\times})^*$ consisting of all elements $\sum_{g\in G}a_g\cdot g^*$ such that 
$\sum_g a_g=0$. Similarly, $X(K_0)$ is the quotient of $X(K^{\times})$ by the relations $\phi_i=-\bar{\phi_i}$ and 
$X(K_0)^*$ is spanned by the elements $\pi_i^*:=\phi_i^*-\bar{\phi_i^*}, 1\leq i\leq g$. Note that $r_K$ takes $L_0$ to $K_0$.

Define $M$ to be the image of $X(K_0)$ in $X(L_0)$ by $\widehat{r_K}$ and let $T_M$ be the torus whose character lattice is $M$.
 
\begin{lemma}\label{bigimage}

We have $D_{T_M}\gg_g D_K^{\frac{1}{2g-1}}.$

\end{lemma}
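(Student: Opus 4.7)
The plan is to combine Corollary \ref{discartin} with an Artin-conductor analysis in the style of Lemma \ref{artin}. By Corollary \ref{discartin}, $D_{T_M}$ agrees with the Artin conductor $f_{\rho_M}$ of the rational representation $\rho_M$ of $G := \Gal(L/\Q)$ on $M \otimes \Q$ up to discriminant negligible factors, so it suffices to prove $f_{\rho_M} \gg_g D_K^{1/(2g-1)}$.

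The central step is to show that $\rho_M$ is a faithful representation of $G$. Identify $X(L^{\times}) \cong \Z[G]$ via the chosen base embedding, so that $X(L_0) = \Z[G]/\Z\cdot N_G$ with $N_G := \sum_{\sigma \in G}\sigma$. With $\phi_1$ the identity embedding, the element $T' := \widehat{r_K}(\phi_1) \in X(L_0)$ is represented by $\sum_{\tau \in S_0^{-1}}\tau \in \Z[G]$, and since $G$ acts transitively on the embeddings of $K$, the module $X(K_0)$ is $G$-cyclic on $\phi_1$, whence $M = G\cdot T'$. A direct coefficient computation shows that for $g \in G$ the difference $gT' - T' \in \Z[G]$ can only be a constant multiple of $N_G$ when the constant is $0$, and this happens iff $g$ lies in the right stabilizer of $S_0$; by primitivity of the CM type this right stabilizer equals $H = \Gal(L/K)$. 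Hence $\mathrm{Stab}_G(T') = H$, and the kernel of $G$ acting on $M$ is $\bigcap_{g \in G} gHg^{-1}$, the largest normal subgroup of $G$ contained in $H$. Since $L$ is the normal closure of $K$, this intersection is trivial, so $\rho_M$ is faithful.

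Given faithfulness, the lower bound follows by adapting the argument of Lemma \ref{artin}: for every prime $p$ ramified in $K$, the inertia group $I_p(L/\Q)$ at a prime of $L$ above $p$ is nontrivial, and by faithfulness it must act nontrivially on $M \otimes \C$, so $v_p(f_{\rho_M}) \geq \dim(M\otimes\C) - \dim(M\otimes\C)^{I_p} \geq 1$. Hence
\[
f_{\rho_M} \;\geq\; \prod_{p\,\text{ramified in }K} p.
\]
Each tamely ramified prime of $K$ contributes at most $[K:\Q]-1 = 2g-1$ to $v_p(D_K)$, while the wildly ramified primes contribute only a bounded factor depending on $g$, so this product is $\gg_g D_K^{1/(2g-1)}$, completing the estimate.

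The main technical obstacle is the stabilizer computation, and in particular verifying that passing to the quotient $X(L_0)$ does not enlarge the stabilizer of $T'$ from $H$. This boils down to ruling out the case where $gT' - T'$ is a nonzero multiple of $N_G$, which would force $|S_0|$ to be $0$ or $|G|$; this is excluded because $|S_0| = |G|/2$.
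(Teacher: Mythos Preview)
Your argument is essentially the paper's own: exhibit an element of $M$ with stabilizer $H$ (you use $T' = \sum_{\tau\in S^{-1}}\tau$; the paper uses $\sum_{g\in S^{-1}}(g-\bar g)$, which in $X(L_0)=\Z[G]/\Z N_G$ equals $2T'$), conclude that the $G$-action on $M$ is faithful since $H$ has trivial normal core, and then bound the Artin conductor from below by the product of primes ramified in $K$. Your write-up in fact fills in two details the paper leaves implicit: that Corollary~\ref{discartin} (rather than Theorem~\ref{BST}) is what relates $D_{T_M}$ to $f_{\rho_M}$, and that the proof of Lemma~\ref{artin} must be rerun with the degree-$2g$ field $K$ in place of the Galois field $L$ to obtain the exponent $\tfrac{1}{2g-1}$. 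One small gap worth closing: Corollary~\ref{discartin} gives $D_{T_M}\asymp f_{\rho_M}$ only up to a factor negligible in $D_L$, so to conclude you should note $D_L\ll_g D_K^{O_g(1)}$ (which follows since $[L:\Q]\le (2g)!$ and the ramified primes of $L$ and $K$ coincide).
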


\begin{proof}

Since $x$ corresponds to a simple abelian variety, the $CM$ type $(K,\Sigma_K)$ must be primitive and so the stabilizer of
the element $\sum_{g\in S^{-1}} (g-\bar{g})$ in $G$ is $H$. Note that this element lies in $M$. Since $L$ is the Galois closure of $K$ we know that
$H$ contains no non-trivial normal subgroup of $G$, so we see that the action of $G$ on $M$ has trivial kernel. The result then follows by considering the Artin 
conductor of the associated representation and applying Theorem \ref{BST} and Lemma \ref{artin}.

\end{proof}

Note that $T_M(\R)$ is compact, so $T_M$ has trivial regulator. In particular, by theorem \ref{torusexact} we can conclude
that the Galois orbit of $x$ is large as long as $M$ is a primitive sublattice of $X(L_0)$. 
This is equivalent to the image of $X(L_0)^*$ in $X(K_0)^*$ being primitive. This is the description we shall work
with.

\subsection{Lower bounds for Galois orbits of Weyl CM-fields}
$\newline\newline$
Following Chai and Oort \cite{CO}, we say that $K$ is a Weyl CM field if the Galois group $\Gal(L/\Q)$ is isomorphic to $W_g$. Weyl CM fields
can be thought of as the generic CM fields, much like a generic field of degree $g$ has Galois group $S_g$. 
The following lemma combined with theorem \ref{reduction} proves theorem \ref{Weyl}.

\begin{lemma}
IF $K$ is a Weyl CM field , then we have $$|im(\widetilde{r_K})|\gg_{g,\epsilon}D_K^{\frac14-\epsilon}.$$
\end{lemma}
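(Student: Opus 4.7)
The plan is to identify the representation $\rho_M$ explicitly for the Weyl case, apply the Brauer--Siegel theorem for tori to bound $h_{T_M}$, use the structural results of Section~5 to transfer this to a lower bound on $|\mathrm{im}(\widetilde{r_K})|$, and finally extract the exponent $1/4$ from the tower inequality $\Disc(F)^2 \leq \Disc(K)$. Since $(K,\Sigma_K)$ is a primitive CM type, $\widehat{r_K}\colon X(K_0) \to X(L_0)$ is injective, and $M \cong X(K_0)$ as $W_g$-modules. The $W_g$-module $X(K_0)$ is the standard signed-permutation representation on $\Z^g$, and rationally fits into the virtual identity
$$X(K_0)_\Q \cong \mathrm{Ind}_H^G \mathbf{1}_\Q - \mathrm{Ind}_{\tilde H}^G \mathbf{1}_\Q,$$
where $H$ and $\tilde H = H\cdot\langle\delta\rangle$ are the stabilizers of $K$ and $F$ in $G = W_g$. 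Consequently $L(s,\rho_{M,\Q}) = \zeta_K(s)/\zeta_F(s)$, so the Artin conductor satisfies $f_{\rho_M} = \Disc(K)/\Disc(F)$.

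Since $T_M \cong K_0$ is compact at infinity, $R_{T_M} = 1$, and Theorem~\ref{BST} gives
$$h_{T_M} = \left(\frac{\Disc K}{\Disc F}\right)^{1/2 + o_g(1)}.$$
The tower formula $\Disc(K) = N_{F/\Q}(\mathfrak{d}_{K/F}) \cdot \Disc(F)^2 \geq \Disc(F)^2$ then yields $\Disc(K)/\Disc(F) \geq \Disc(K)^{1/2}$, so $h_{T_M} \gg_{g,\epsilon} D_K^{1/4 - \epsilon}$.

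Finally, I transfer this to $|\mathrm{im}(\widetilde{r_K})|$. The sublattice $M$ is not quite primitive: a direct computation with signed permutations shows that the dual map $\widehat{r_K}^*\colon X(L_0)^* \to X(K_0)^*$ has image exactly $2X(K_0)^*$, so the saturation $M'$ of $M$ in $X(L_0)$ satisfies $[M':M] = 2^g$. Thus $L_0 \twoheadrightarrow T_{M'}$ is a surjection of tori, and $T_{M'} \to T_M$ is an isogeny of degree $O_g(1)$. Theorem~\ref{torusexact} then forces $CL_{L_0} \to CL_{T_{M'}}$ to be surjective up to a discriminant-negligible factor, Lemma~\ref{isog} gives $h_{T_{M'}} \approx h_{T_M}$ up to the same, and the standard reductions with $\G_m$ (via Theorem~\ref{torusexact} applied to the defining exact sequences) identify the class groups of $L^\times, L_0, MT^0_x, K_0$, and $K^\times$ pairwise up to discriminant-negligible factors. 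Combining everything yields the lemma. The main obstacle is the bookkeeping around this chain of reductions and the non-primitivity of $M$, but in the Weyl case all relevant cohomology groups and isogeny degrees are explicitly $O_g(1)$, so the argument goes through cleanly.
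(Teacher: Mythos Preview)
Your overall strategy coincides with the paper's: identify $T_M$ with $K_0$, compute its Artin conductor as $D_K/D_F$ via the exact sequence $1\to K_0\to K^\times\to F^\times\to 1$, apply Theorem~\ref{BST}, and extract the exponent $\tfrac14$ from $D_F^2\le D_K$. That part is fine.

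The primitivity step, however, contains a concrete error. Your ``direct computation'' that $\widehat{r_K}^*$ has image exactly $2X(K_0)^*$ is false. In the Weyl case the map is \emph{surjective}: for each $1\le i\le g$ the element
\[
g_i=(1^+)\cdots(i^-)\cdots(g^+)\in W_g
\]
(the single sign-flip at $i$) lies in $G$ precisely because $G=W_g$, and one checks immediately that $\widehat{r_K}^*(1^*-g_i^*)=\phi_i^*-\bar\phi_i^*=\pi_i^*$. Thus $M$ is already primitive in $X(L_0)$, Theorem~\ref{torusexact} applies directly to the surjection $L_0\twoheadrightarrow T_M=K_0$, and no isogeny workaround is needed. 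This is also the \emph{only} place where the Weyl hypothesis is actually used, which your write-up obscures.

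Your workaround has a separate gap that would bite if $M$ really were non-primitive. Knowing that $T_{M'}\to T_M$ is an isogeny of degree $O_g(1)$, together with Lemma~\ref{isog}, tells you $h_{T_{M'}}$ and $h_{T_M}$ agree up to negligible factors, but it does \emph{not} bound the cokernel of $CL_{T_{M'}}\to CL_{T_M}$. By Corollary~\ref{transfer} that cokernel is $CL\big((M'/M)^*\big)$; for your claimed $M'/M\cong(\Z/2\Z)^g$ with the $S_g$-permutation action (signs vanish mod~$2$), this is essentially $2$-torsion in the class group of a degree-$g$ subfield of $L$, which is not known to be discriminant negligible. So ``isogeny of bounded degree $\Rightarrow$ negligible cokernel on class groups'' is exactly the kind of statement Section~6 warns is equivalent to Zhang's conjecture.

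In short: replace your incorrect non-primitivity claim with the two-line surjectivity argument above, drop the isogeny paragraph entirely, and your proof becomes the paper's.
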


\begin{proof}
Since $K_0$ fits into an exact sequence of Tori $$F^{\times}\rightarrow K^{\times}\rightarrow K_0$$ and the Artin conductor is multiplicative, it follows
from corollary \ref{discartin} that $$D_{K_0}\gg_{\epsilon}\frac{D_K^{1-\epsilon}}{D_F}\geq D_K^{1/2},$$ so the lemma will follow once we show that 
$\widehat{r_K}^*$ maps $X(L_0)^*$ surjectively onto $X(K_0)^*$. 

For $1\leq i\leq g$, let $g_i\in\Gal(L/\Q)$ be the element such that
\begin{equation*}
g_i(\phi_j)=
\begin{cases}
\phi_j & i\neq j\\
\bar{\phi_i} & i=j\\
\end{cases}
\end{equation*}

Note that we know $g_i$ exists precisely because $K$ is a Weyl CM field. Now, $1^*-g_i^*$ is in $X(L_0)^*$ and maps onto $\pi_i^*$ in $X(K_0)^*$. Since
the $\pi_i^*$ span $X(K_0)^*$, this completes the proof.

\end{proof}

\subsection{Proofs for g=2,3,4,5, and 6}
$\newline\newline$
In this subsection we prove that Galois orbits are large for $g=2,3,4,5,6$. We record one final combinatorial fact before
proceeding:

\begin{lemma}\label{transport}
For each $i\neq 1$ and $\epsilon\in\{+1,-1\}$ there exist an element $g$ in $S$ which takes $i$ somewhere with sign $\epsilon$. 
\end{lemma}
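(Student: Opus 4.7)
The plan is to handle the two signs separately, using that the identity element supplies the sign $+1$ case for free and that the $-1$ case reduces to a contradiction with the primitivity of the CM type.

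For $\epsilon = +1$, the identity $e \in G$ lies in $S$ because $e(\phi_1) = \phi_1 \in \Sigma_K$, and $e$ sends every index $i$ to itself with sign $+1$. So the identity is the required witness.

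The substantive case is $\epsilon = -1$, which I will prove by contradiction. Assume that every $g \in S$ sends $i$ to some index with sign $+1$, equivalently that $g(\phi_i) \in \Sigma_K$ for all $g \in S$. Choose any lift $\tau \in G$ of the embedding $\phi_i$, so that $\tau(\phi_1) = \phi_i$; then $\tau \in S$ because $\phi_i \in \Sigma_K$. For every $g \in S$, the computation
\[
g\tau(\phi_1) \;=\; g(\phi_i) \;\in\; \Sigma_K
\]
shows $g\tau \in S$, giving $S\tau \subseteq S$ and hence $S\tau = S$ by a cardinality count.

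Now I invoke the primitivity of the CM type $(K,\Sigma_K)$, recorded in Section 7.1 as the statement that the right stabilizer of $S$ in $G$ equals $H = \Gal(L/K)$. Therefore $\tau \in H$, which forces $\tau(\phi_1) = \phi_1$; this contradicts $\tau(\phi_1) = \phi_i$ together with $i \neq 1$. No step presents a genuine obstacle: all of the difficulty was already packed into the primitivity hypothesis, so the only thing to verify carefully is that the signed-permutation encoding of $G \subset W_g$ in Section 7.4 really translates primitivity into the right-stabilizer statement used above.
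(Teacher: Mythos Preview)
Your proof is correct and follows essentially the same idea as the paper's: find an element in the right stabilizer of $S$ that lies outside $H$, contradicting primitivity. The only difference is cosmetic: the paper runs a single contradiction argument covering both signs at once (assuming every element of $S$ carries $i$ with the same fixed sign $\epsilon$, then producing a right-stabilizing element with $g(1)=i$ of sign $\epsilon$), whereas you dispose of $\epsilon=+1$ trivially via the identity and run the contradiction only for $\epsilon=-1$. Both arguments are the same at the core.
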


\begin{proof}

Suppose for the sake of contradiction that there is an $i\neq 1$ and an $\epsilon\in\{+1,-1\}$ 
such that $\forall g\in S$, $g$ takes $i$ to $g(i)$ with sign $\epsilon$. Since $G$ acts transitively on the set $\{1,2,\dots,g\}$ there is a $g\in G$ with
$g(1) = i$. Multiplying $g$ by $\delta$ if necessary, we can also ensure that $g$ takes $1$ to $i$ with sign $\epsilon$. But then $g$ is in the right stabilizer
of $S$ and $g\notin H$, contradicting the fact that $S$ induces a primitive CM type on $K$.

\end{proof}

The following theorem combined with theorem \ref{reduction} proves theorem \ref{main}.

\begin{thm}

Conjecture \ref{specific} holds for $g=2,3,4,5$ with $$\delta(2)=\delta(3)=\frac14,$${} $$\delta(4)=\frac17, $$ $$\delta(5)=\frac19,$$ 
and $$\delta(6)=\frac1{12}.$$

\end{thm}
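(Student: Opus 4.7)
The plan is to use the reduction already set up in Subsection 7.4: by Theorem \ref{torusexact} and Corollary \ref{transfer}, the image size $|\mathrm{im}(\widetilde{r_K})|$ agrees, up to discriminant negligible factors, with the class number of the torus $T_N$ whose cocharacter lattice is the saturation of $N := \widehat{r_K}^*(X(L_0)^*)$ inside $X(K_0)^*$. Since $T_N$ is compact at infinity and so has trivial regulator, the Brauer-Siegel Theorem \ref{BST} gives $h_{T_N} \gg D_{T_N}^{1/2 + o(1)}$, reducing the problem to establishing, for each $g \in \{2,3,4,5,6\}$, a lower bound of the form $D_{T_N} \gg D_K^{2\delta(g) - \epsilon}$.

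The first step is to enumerate the admissible Galois groups $G \subset W_g$: those whose image in $S_g$ is transitive, which contain the complex-conjugation element $\delta = (1^-)(2^-)\cdots(g^-)$, and whose right stabilizer of $S$ in $G$ equals $H = \mathrm{stab}_G(1^+)$ (the primitivity of the CM type). For $g \leq 6$ this produces an explicit finite list. The second step is, for each such $G$, to apply Lemma \ref{transport} together with the formula $\widehat{r_K}^*(g^*) = \sum_i \delta^{(1-\sigma_i)/2}(\phi_i^*)$ to produce explicit generators of $N$, from which one reads off the $G$-module structure of its saturation in $X(K_0)^*$. Corollary \ref{discartin} then identifies $D_{T_N}$ with the norm of the Artin conductor of the corresponding rational representation up to discriminant negligible factors, and a refinement of Lemma \ref{artin} applied to this representation provides the needed lower bound in terms of $D_K$.

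The main obstacle will be the finite but intricate case analysis in step two. In the Weyl case for each $g$ the computation degenerates into the preceding Weyl lemma, giving exponent $1/4$; the difficulty is entirely in the non-Weyl cases, where $G \subsetneq W_g$ may force $N$ to miss a Galois-stable sublattice of $X(K_0)^*$ of large rank and so deflate the effective conductor. The asserted values $\delta(4) = 1/7$, $\delta(5) = 1/9$, $\delta(6) = 1/12$ are dictated by the worst admissible $G$ at each $g$, and the technical heart of the proof lies in verifying the exponent in each such worst case — most delicately for $g = 6$, where more subgroups of $W_6$ must be enumerated and their image lattices in $X(K_0)^*$ individually analyzed.
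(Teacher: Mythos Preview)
The central reduction in your first paragraph is incorrect. Theorem \ref{torusexact} applies only to short exact sequences of tori, i.e.\ to surjections with connected kernel; but the map $L_0 \to T_N$ onto the torus with cocharacter lattice $N^{\mathrm{sat}}$ has disconnected kernel whenever $N \ne N^{\mathrm{sat}}$. Factoring correctly through the torus with cocharacter lattice $N$ itself, one finds
\[
|\mathrm{im}(\widetilde{r_K})| \;=\; \frac{h_{T_N}}{CL\bigl(N^{\mathrm{sat}}/N\bigr)}
\]
up to discriminant-negligible factors, not $h_{T_N}$ alone. Corollary \ref{transfer} does not make the correction term $CL(N^{\mathrm{sat}}/N)$ negligible; that requires the Galois action on $N^{\mathrm{sat}}/N$ to be trivial (Corollary \ref{trivial}), which fails in general. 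For $g=5$ the paper shows this torsion cokernel can be $\Z/3\Z$ with Galois acting through a quadratic subfield $k\subset K$, and for $g=6$ it can be cyclic with action through a subfield of degree up to $4$. In those cases $CL(B)$ is bounded only by the full class number of that subfield --- an ingredient entirely missing from your plan. In fact the value $\delta(6)=\tfrac{1}{12}$ in the theorem comes precisely from this subfield class-number bound (Case ii of Lemma \ref{g6}), not from any Artin-conductor estimate for $T_N$; so your proposed mechanism cannot produce the stated exponent.

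A second, methodological point: the paper does not enumerate the admissible subgroups $G\subset W_g$. It works directly with the image lattice $N$, using that $N$ is $G$-stable, is spanned by $\{0,1\}$-vectors including $(1,\dots,1)$, and is constrained by Lemma \ref{transport}. From this one classifies the possible cokernels $X(K_0)^*/N$ by elementary linear algebra in the signed-permutation representation, uniformly in $G$. This is both shorter than a subgroup enumeration and is what exposes the subfield through which $G$ acts on the torsion cokernel --- exactly the datum needed to control $CL(B)$.
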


\begin{remark}
\emph{ While $\delta(2)$ and $\delta(3)$ are optimal, we have made no attempts to optimize $\delta(4), \delta(5)$ or $\delta(6)$. Since the proof reduces '$\delta(4)$ to
a discriminant calculation, optimizing $\delta(4)$ is a straightforward exercise in representation theory, while optimizing $\delta(5)$ would involve
bounding 3-torsion in class groups of quadratic fields. Of course, optimizing the values $\delta_g$ for conjecture \ref{general} would require more work, 
and in particular a better way to handle the case of non-maximal endomorphism ring. }
\end{remark}

\begin{proof}

The theorem follows from lemmas \ref{g2}, \ref{g3}, \ref{g4}, \ref{g5}, \ref{g6} and the previous lemma \ref{bigimage}.

\begin{lemma}\label{g2}

For $g=2$, the image of $\widehat{r_K}^*$ is all of $X(K_0)^*$.
\end{lemma}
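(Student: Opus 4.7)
The plan is to exhibit explicit elements of $X(L_0)^*$ whose images under $\widehat{r_K}^*$ generate $X(K_0)^*$.

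First I would use the two most symmetric elements of $G$. The identity $e$ takes each position to itself with sign $+1$, so by the formula for $\widehat{r_K}^*$ it sends $e^* \mapsto \phi_1^* + \phi_2^*$. Similarly, complex conjugation $\delta$ acts as $(1^-)(2^-)$, so $\widehat{r_K}^*(\delta^*) = \bar\phi_1^* + \bar\phi_2^*$. The element $e^* - \delta^*$ has coefficients summing to zero, hence lies in $X(L_0)^*$, and it maps to $\pi_1^* + \pi_2^*$. This produces one element of $X(K_0)^*$ in the image without any hypothesis on $G$ beyond its containing $e$ and $\delta$.

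Next I would invoke Lemma \ref{transport} with $i = 2$ and $\epsilon = -1$ to obtain some $g \in S$ taking position $2$ somewhere with sign $-1$. Since $g \in S$ also takes position $1$ somewhere with sign $+1$, and since $g$ is a signed permutation of a two-element set, the only possibilities are $g = (1^+)(2^-)$ or $g = (1^+2^-)$. In the first case $\widehat{r_K}^*(g^*) = \phi_1^* + \bar\phi_2^*$, so $\widehat{r_K}^*(e^* - g^*) = \pi_2^*$. In the second case $\widehat{r_K}^*(g^*) = \bar\phi_1^* + \phi_2^*$, so $\widehat{r_K}^*(e^* - g^*) = \pi_1^*$. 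Either way, combined with $\pi_1^* + \pi_2^*$ from the previous step, both $\pi_1^*$ and $\pi_2^*$ lie in the image, which is exactly the desired surjectivity.

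There is no real obstacle beyond bookkeeping of signs; the only structural ingredient is Lemma \ref{transport}, which forces the existence of an element of $G$ breaking the symmetry between $\phi_2$ and $\bar\phi_2$, and after that the entire argument is a finite case analysis inside the $8$-element group $W_2$.
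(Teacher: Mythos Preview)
Your proof is correct and follows essentially the same approach as the paper: both use $1^*-\delta^*$ (up to sign) to obtain $\pm(\pi_1^*+\pi_2^*)$, then invoke Lemma~\ref{transport} with $i=2$, $\epsilon=-1$ to produce an element of $S$ that is either $(1^+)(2^-)$ or $(1^+2^-)$, yielding $\pm\pi_1^*$ or $\pm\pi_2^*$ in the image and hence surjectivity. The only difference is cosmetic (you work with $e^*-g^*$ rather than $g^*-1^*$), so the arguments are the same.
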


\begin{proof}

$X(K_0)^*$ is generated by $\pi_1^*$ and $\pi_2^*$. We write $(a,b)$ to denote the element
$a\pi_1^*+b\pi_2^*$. Recall that $X(L_0)^*$ is generated by the elements $g^*-1^*$. 

Now, we have $$\widehat{r_K}^*(\delta^*-1^*) = (-1,-1).$$ Moreover, by lemma \ref{transport}
there is an element in $S$ which sends $1$ somewhere with sign 1 while sending $2$ somewhere
with sign $-1$. This must either be $(1^+2^-)$ or $(1^+)(2^-)$. So either $(0,-1)$ or $(-1,0)$ is in the image of $\widehat{r_K}^*$.
This together with  $(-1,-1)$ already generates so we're done.
\end{proof}

\begin{lemma}\label{g3}

For $g=3$, the image of $\widehat{r_K}^*$ is all of $X(K_0)^*$.

\end{lemma}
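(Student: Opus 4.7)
The plan is to extend the $g=2$ argument. Writing elements of $X(K_0)^*$ as triples $(c_1, c_2, c_3) = \sum c_i \pi_i^*$, a direct computation shows that for $g \in G$ with underlying permutation $\tau$ and signs $(\sigma_1, \sigma_2, \sigma_3)$, the vector $\widehat{r_K}^*(g^* - 1^*)$ has its $\tau(j)$-th coordinate equal to $-1$ precisely when $\sigma_j = -1$ (and $0$ otherwise). In particular, complex conjugation $\delta$, which has sign vector $(-,-,-)$, gives $\widehat{r_K}^*(\delta^* - 1^*) = (-1,-1,-1)$, so this element sits in the image for free.

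The key reduction is that it suffices to produce a single element of the form $\pm\pi_{k_0}^*$ in the image. Indeed, $\widehat{r_K}^*$ is $G$-equivariant and $G$ acts transitively on $\{1,2,3\}$ (because $L$ is the Galois closure of $K$), so $G$-translates of such an element yield $\pm\pi_k^*$ for every $k$, and these together with $(-1,-1,-1)$ clearly span all of $X(K_0)^*$. To produce such an element, I invoke lemma \ref{transport} with $i=2$, $\epsilon=-1$ to find $g \in S$ with $\sigma_{g,1} = +1$ and $\sigma_{g,2} = -1$, and split on the remaining sign $\sigma_{g,3}$. If $\sigma_{g,3} = +1$ the sign vector is $(+,-,+)$ and directly $\widehat{r_K}^*(g^* - 1^*) = -\pi_{\tau(2)}^*$; if $\sigma_{g,3} = -1$ the sign vector is $(+,-,-)$ and $\widehat{r_K}^*(g^* - 1^*) = (-1,-1,-1) + \pi_{\tau(1)}^*$, so subtracting $\widehat{r_K}^*(\delta^* - 1^*)$ leaves $\pi_{\tau(1)}^*$ in the image.

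The main technical point to watch is the combinatorial bookkeeping: the $-1$'s in $\widehat{r_K}^*(g^* - 1^*)$ appear at positions $\tau(j)$ rather than at $j$, and the passage from $X(K^\times)^*$ to $X(K_0)^*$ folds $\bar\phi_i^*$ into $-\phi_i^*$. With these identifications in hand, the argument collapses to a two-case analysis together with $G$-transitivity, and requires no input beyond lemma \ref{transport}.
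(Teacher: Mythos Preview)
Your proof is correct. It is a streamlined variant of the paper's argument: both proofs use the element $(-1,-1,-1)$ coming from $\delta$, invoke lemma \ref{transport} to control signs, and exploit the $G$-invariance of the image of $\widehat{r_K}^*$. The difference is that the paper first splits into two cases according to whether $G$ contains a $3$-cycle with mixed signs, and only in the second case (all-same-sign $3$-cycles) appeals to lemma \ref{transport} and $G$-invariance; you bypass the $3$-cycle analysis entirely by observing at the outset that producing a single $\pm\pi_{k_0}^*$ suffices, since transitivity of $G$ on $\{1,2,3\}$ then yields $\pm\pi_k^*$ for every $k$, which is already a basis. This is a genuine simplification --- the paper's Case A (mixed-sign $3$-cycle) is rendered superfluous by your reduction --- though the underlying ingredients are identical.
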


\begin{proof}

Similarly to the case $g=2$, we write $(a,b,c)$ to denote $a\pi_1^*+b\pi_2^*+c\pi_3^*$. We have $$\widehat{r_K}^*(\delta^*-1^*)=(-1,-1,-1).$$

Since $G$ acts transitively on the embeddings $\phi_1,\phi_2,\phi_3$ it must contain a 3-cycle. 
Suppose $G$ has a 3-cycle $\sigma$ where not all the signs are the same. By multiplying $\sigma$ with complex
conjugation $\delta$ we can ensure that two of the signs are positive. Assume $\sigma=(1^+2^+3^-)$. Then 
$$\widehat{r_K}^*(1^*-\sigma^*) = (1,0,0)$$ and $$\widehat{r_K}^*(1^*-(\sigma^2)^*)=\widehat{r_K}^*(1^*-(1^+3^-2^-)^*)=(1,1,0)$$ and these 2 vectors
combined with $(-1,-1,-1)$ generate $X(K_0)^*$.

The same logic holds for the other such cycles. 
Thus, we are reduced to the case where $G$ only contains 3-cycles where all signs are the same. 
Since $G$ contains $\delta$ we see that $G$ must contain
all 3 cycles where all signs are the same, in particular the element $\sigma=(1^+2^+3^+)$. 

Recall that the image of $\widehat{r_K}^*$ is closed under the action of $G$.
By lemma \ref{transport} $G$ contains an element $\tau$ which takes something to $1$ with a positive sign and 
something to $2$ with a negative sign. Hence,
$\widehat{r_K}^*(1^*-\tau^*)$ is either $(0,1,0)$ or $(0,1,1)$. Now, acting by $\sigma$ we see that the image of $\widehat{r_K}^*$ contains either all of
$(0,1,0), (0,0,1), (1,0,0)$ or $(0,1,1), (1,0,1), (1,1,0)$. Either way, together with $(1,1,1)$ we see that this generates $X(K_0)^*$.

\end{proof}

\begin{lemma}\label{g4}

For $g=4$, the size of the image of $\widetilde{r_K}$ is at least $c_{\epsilon}D_K^{\frac17-\epsilon}.$

\end{lemma}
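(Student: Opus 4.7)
The plan is to combine the machinery of Sections 4 and 5 with a sharper discriminant computation than the one in Lemma \ref{bigimage}. First, I would factor the map $r_K:L_0\to K_0$ through the auxiliary torus associated to $M$. Let $M_0$ denote the saturation of $M$ inside $X(L_0)$; then $r_K$ factors as $L_0\twoheadrightarrow T_{M_0}\hookrightarrow K_0$. Applying Theorem \ref{torusexact} to the two short exact sequences of tori flanking $T_{M_0}$, we get that $|\textrm{im}(\widetilde{r_K})|$ agrees with $h_{T_{M_0}}$ up to a discriminant-negligible factor. Since $T_{M_0}$ is isogenous to $T_M$ and $T_M(\R)$ is compact (as noted immediately after Lemma \ref{bigimage}), the regulator of $T_{M_0}$ is trivial. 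Theorem \ref{BST} then gives $h_{T_{M_0}}=D_{T_{M_0}}^{1/2+o(1)}$.

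The lemma is therefore reduced to strengthening Lemma \ref{bigimage} to the bound $D_{T_{M_0}}\gg_{g,\epsilon}D_K^{2/7-\epsilon}$, which is a twofold improvement on the naive exponent $1/7$ coming from Lemma \ref{artin}. To establish this I would enumerate the possible Galois groups $G=\Gal(L/\Q)\hookrightarrow W_4$ compatible with (i) $G$ acting transitively on the set of embeddings $\{\phi_1,\phi_2,\phi_3,\phi_4\}$, and (ii) the primitivity constraint that the right stabilizer of $S$ in $G$ equals $H=\Gal(L/K)$. This is a finite list: essentially the transitive subgroups of $S_4$ together with those of their lifts to $W_4$ that satisfy (ii). For each such $G$, compute the character of the $G$-representation on $M_0$, and apply the conductor-discriminant formula $D_K=\prod_{\chi}f_\chi^{\dim\chi}$ (where $\chi$ runs over irreducible constituents of $\textrm{Ind}_H^G 1$) together with Corollary \ref{discartin} to express both $D_K$ and $D_{T_{M_0}}$ as weighted products over the same set of irreducibles, and then compare.

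The main obstacle is this case-by-case verification. The Weyl case $G\cong W_4$ is already handled, with the better exponent $\delta=1/4$, by the preceding lemma; cases with large $G$ should follow straightforwardly from Lemma \ref{bigimage} applied to the faithful representation on $M_0$. The tight cases pinning down the exponent $2/7$ are presumably small non-Weyl subgroups of $W_4$ (for example certain lifts of transitive $S_4$- or $D_4$-subgroups) in which $M_0$ sits inside a comparatively narrow subrepresentation of $X(L_0)$, so that only a small portion of the irreducible constituents of $\textrm{Ind}_H^G 1$ appears in the conductor of $M_0$. The combinatorial fact from Lemma \ref{transport} (a transport principle for elements of $S$) should be the main tool for showing that, in every case, enough irreducible constituents of $X(K_0)$ are forced to appear in $M_0$ to guarantee the required $2/7$ fraction. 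As the author remarks, this is a bounded discriminant/representation-theory computation once the case list is in hand, and no number-theoretic input beyond what has already been developed is required.
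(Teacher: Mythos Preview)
Your reduction step contains a genuine gap. The saturation $M_0$ of $M$ inside $X(L_0)$ does \emph{not} give a factorization $L_0\twoheadrightarrow T_{M_0}\hookrightarrow K_0$: the surjection $L_0\twoheadrightarrow T_{M_0}$ is fine (this is exactly what saturation buys), but the induced map $T_{M_0}\to K_0$ is the composite of the isogeny $T_{M_0}\to T_M$ with the closed immersion $T_M\hookrightarrow K_0$, and hence has nontrivial finite kernel whenever $M\neq M_0$. Theorem~\ref{torusexact} therefore does \emph{not} tell you that $Cl_{T_{M_0}}\to Cl_{K_0}$ has small kernel. What one actually obtains is
\[
|\textrm{im}(\widetilde{r_K})|\;\asymp\;\frac{h_{T_M}}{CL(M_0/M)}\;\asymp\;\frac{h_{T_{M_0}}}{CL(M_0/M)},
\]
so bounding $D_{T_{M_0}}$ alone is not enough: you must also control the obstruction $CL(M_0/M)$, and this is precisely the heart of the matter. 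Your proposed case enumeration over subgroups of $W_4$ addresses only the conductor of $M_0$ and says nothing about this torsion term.

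The paper's proof is both shorter and structurally different. Rather than enumerating Galois groups, it works directly with the image $N$ of $\widehat{r_K}^*$ inside $X(K_0)^*\cong\Z^4$, observing that $N$ is spanned by $0/1$ vectors including $(1,1,1,1)$. A brief combinatorial argument (using $G$-invariance) shows that $N$ is either primitive in $X(K_0)^*$ or equal to the index-$2$ sublattice $N_K=\{(a,b,c,d):a+b+c+d\equiv 0\bmod 2\}$. In the primitive case Lemma~\ref{bigimage} finishes. In the non-primitive case the quotient $X(K_0)^*/N_K\cong\Z/2\Z$ necessarily carries the \emph{trivial} Galois action, so Corollary~\ref{trivial} gives that $CL(\Z/2\Z)$ is discriminant negligible --- exactly the obstruction you were missing --- and one again reduces to the full class number of $K_0$. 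No enumeration of transitive subgroups of $W_4$, and no conductor comparison beyond Lemma~\ref{bigimage}, is needed.
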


\begin{proof}

Again, we use the notation $(a,b,c,d)$ to denote $a\pi_1^*+b\pi_2^*+c\pi_3^*+d\pi_4^*$. Let $$N_K:=\{(a,b,c,d)\mid a+b+c+d\equiv 0\mod{2}\}.$$
Now, as $X(L_0)^*$ is generated by the elements $1^*-g^*$, we see
that the image of $\widehat{r_K}^*$ is the span of some subset $W$ of the vectors $$\{(a_1,a_2,a_3,a_4)\mid a_i\in\{0,1\}, 1\leq i\leq 4\}$$ which
includes the vector $v=(1,1,1,1)$.  We claim that the image of $\widehat{r_K}^*$ is either primitive, or the sublattice $N_K$. 

To see this, note first that by replacing $w$ by $v-w$ we can assume all vectors in $W$ with 1 or 2 entries being 1, and the remaining
entries being 0. Now, if $W$ contains a vector with a single entry being 1, then since the image of $\widehat{r_K}^*$ is $G$-invariant it must contain all the 
basis vectors and thus be the whole thing. We can thus assume that $W$ only contains vectors with exactly 2 entries being 1. These come in pairs 
$$\{(1,1,0,0),(0,0,1,1)\},\{(1,0,1,0), (0,1,0,1)\}, \{(1,0,0,1),(0,1,1,0)\}.$$
If $S$ contains 1 or 2 such pairs than the span can easily be checked to be primitive. Hence the only case where the image is not a 
primitive sublattice is if $S$ contains all 3 such pairs, and the image is thus $N_K$. 

Now, if the image is primitive the lemma follows by lemma \ref{bigimage}. We thus restrict to the case where the image is $N_K$. Let $T_K$ be the torus whose 
cocharacter module is $N_K$, and consider the induced map $T_M\rightarrow K_0$. It is sufficient to 
show that the cokernel of the induced map on Class groups is Discriminant negligible. But $X(L_0)^*/N_K \approx \Z/2\Z$, and so it must be a trivial Galois
module. The result therefore follows from corollary \ref{transfer}. 

\end{proof}

\begin{lemma}\label{g5}

For $g=5$, the size of the image of $\widetilde{r_K}$ is at least $c_{\epsilon}D_K^{\frac19-\epsilon}.$

\end{lemma}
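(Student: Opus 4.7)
The plan is to follow the strategy of Lemma \ref{g4}. Writing $(a_1,\ldots,a_5)$ for $\sum a_i\pi_i^* \in X(K_0)^*$, the image of $\widehat{r_K}^*$ in $X(K_0)^* = \Z^5$ is the $\Z$-span of $v = (1,1,1,1,1)$ (arising from $\delta^*-1^*$) together with a collection of $0$--$1$ vectors; by replacing any generator $w$ with $v-w$ whenever $w$ has weight $\geq 3$, one may assume every generator other than $v$ has weight at most $2$. If some generator has weight $1$, then since $G$ acts transitively on $\{1,\ldots,5\}$ (because $K$ is a field), the image contains every standard basis vector and hence equals $\Z^5$; in this primitive case Lemma \ref{bigimage} yields $\gg_\epsilon D_K^{1/9-\epsilon}$ directly.

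Otherwise the image is generated by $v$ together with weight-$2$ vectors, and I would split into subcases according to the orbit structure of $S := \pi(G) \subseteq S_5$ on $2$-element subsets of $\{1,\ldots,5\}$. The transitive subgroups of $S_5$ are $C_5, D_5, F_{20}, A_5, S_5$; the last three are $2$-transitive (giving a single orbit of pairs), while $D_5$ and $C_5$ split pairs into two orbits of size $5$ (``adjacent'' and ``diagonal'' in the pentagon structure). For each configuration one computes the cokernel $\Z^5/M$ as a $G$-module and applies Corollary \ref{transfer}---specializing to Corollary \ref{trivial} when the quotient is a trivial Galois module---to show that the induced cokernel on class groups is discriminant-negligible, so the bound again reduces to the primitive case.

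A clean preliminary observation rules out all $2$-torsion in $\Z^5/M$. The standard representation $N = \{(a_1,\ldots,a_5) : \sum a_i = 0\} \subseteq \F_2^5$ is $\F_2[S]$-irreducible for every transitive subgroup $S \subseteq S_5$: each such $S$ contains a $5$-cycle, and over $\F_2$ the group $C_5$ has a unique non-trivial irreducible representation---of dimension equal to the order of $2$ modulo $5$, namely $4$---which therefore coincides with $N$. Consequently the only $S$-invariant subspaces of $\F_2^5$ containing $v$ are $\langle v\rangle$ and $\F_2^5$, and the case $M \bmod 2 = \langle v\rangle$ is incompatible with primitivity of the CM type: it would force every $g \in G$ to have uniform signs in its signed permutation, reducing the stabilizer of $\Sigma_K$ in $G$ to index $2$ rather than the required $2g = 10$. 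Hence $M \bmod 2 = \F_2^5$, so $\Z^5/M$ has odd order. The main obstacle that remains is handling the odd-torsion quotients that can arise, most subtly at $p=5$: there the $\F_5$-standard representation has a longer composition series (since $v$ lies in $\ker(\sum)$ modulo $5$), so the $5$-part of $\Z^5/M$ could in principle be non-trivial and must be controlled separately before Corollary \ref{transfer} can be invoked.
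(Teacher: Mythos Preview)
Your proposal is incomplete by your own admission, and the plan for completing it has a genuine flaw. You intend, for each configuration of weight-$2$ generators, to show via Corollaries \ref{transfer} and \ref{trivial} that the induced cokernel on class groups is discriminant-negligible. But this cannot succeed: complex conjugation $\delta$ sends each $\pi_i^*$ to $-\pi_i^*$, so it acts as $-1$ on any non-trivial cyclic quotient $B$. The Galois action on $B$ is therefore never trivial, Corollary \ref{trivial} does not apply, and Corollary \ref{transfer} by itself only identifies $CL(B)$ up to negligible factors without bounding it. Your preliminary observation ruling out $2$-torsion is correct and pleasant, but the worry you flag about $p=5$ turns out to be a red herring; the genuine obstruction sits at $p=3$, and it is not about computing $B$ but about bounding $CL(B)$.

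The paper's argument is both shorter and takes a different tack. Rather than a case split over transitive subgroups of $S_5$, it picks any weight-$2$ generator $\pi_z^*+\pi_w^*$ and a $5$-cycle $\sigma\in G$ (some power of any order-$5$ element) sending $z$ to $w$. Acting by powers of $\sigma$ forces every $\pi_k^*$ to equal $\pm\pi_z^*$ in $B$, so $B$ is cyclic; since $\pi_w^*=-\pi_z^*$, the relation $\sum_i\pi_i^*=0$ coming from $v$ reads $n\pi_z^*=0$ with $n\in\{\pm1,\pm3\}$. Hence $B$ is trivial or $\Z/3\Z$. In the latter case the $G$-action on $B$ factors through $\Z/2\Z$, cutting out a quadratic subfield $k\subset K$; Corollary \ref{transfer} then gives $CL(B)\approx CL(k)[3]\le |CL(k)|\ll D_k^{1/2}\le D_K^{1/10}$. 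This bound is \emph{not} discriminant-negligible, but combining it with $h_{K_0}\gg D_K^{1/4-\epsilon}$ still yields $|im(\widetilde{r_K})|\gg D_K^{3/20-\epsilon}>D_K^{1/9-\epsilon}$. The missing ingredient in your approach is precisely this step: accepting a non-negligible $CL(B)$ and controlling it via the class number of an auxiliary subfield of bounded degree.
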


\begin{proof}

We use the notation $(a,b,c,d,e)$ to denote $a\pi_1^*+b\pi_2^*+c\pi_3^*+d\pi_4^*+e\pi_5^*$. Now, if the span of $\widehat{r_K}^*$ is primitive, we're done by
lemma \ref{bigimage}, so we assume from now on that this is not the case. As $X(L_0)^*$ is generated by the elements $1^*-g^*$, we see
that the image of $\widehat{r_K}^*$ is the span of some subset $W$ of the vectors $$\{(a_1,a_2,a_3,a_4,a_5)\mid a_i\in\{0,1\}, 1\leq i\leq 5\}$$ which
includes the vector $v=(1,1,1,1,1)$. By replacing $w$ by $v-w$ we can assume all vectors in  $W$ except $v$ have 1 or 2  entries being 1, and at least 3 
entries being 0. If any vectors in $W$ have exactly 1 component being 1 then this eliminates a variable and reduces us to the $g=4$ case above, so we can
assume all vectors in $W$ have exactly 2 entries being 1.  Thus the image contains a vector $\pi_z^*+\pi_w^*$ for some $z\neq w$. Now, note that the image
of $\widehat{r_K}^*$ is closed under the action of $G$. Moreover, since $G$ contains an element of order $5$, it must contain a 5-cycle. Taking a power
of this 5 cycle we can conclude that $G$ contains a 5-cycle $\sigma$ which takes  $z$ to $w$ (and has some signs). Acting on the vector $\pi_i^*+\pi_j^*$ 
by powers of $\sigma$ we see that the cokernel $B$ is generated by $\pi_z^*$ with each $\pi_k^*$ equaling  $\pm\pi_z^*$. We also have the relation 
$\pi_1^*+\pi_2^*+\pi_3^*+\pi_4^*+\pi_5^*=0$ in $B$. Since $\pi_w^* = -\pi_z^*$, we can conclude that either $B$ is trivial or $B\approx \Z/3\Z$. Since we assumed
that $B$ isn't trivial, we must have $B\approx \Z/3\Z$. 

Our goal is now to bound $CL(B)$. The automorphism group of $B$ as an abelian group is $\Z/2\Z$, so we get a map from $\Gal(L/\Q)$ to $\Z/2\Z$. The kernel $G_1$ 
must be of index 2.
Now consider the quadratic field $k$ fixed by $G_1$. Notice that $k\subset K$ since $\Gal(L/K)$ fixes $\pi_1^*$ and thus acts trivially on $B$. Now, the module
$X(k^{\times})^*/2X(k^{\times})^*$ is an extension of $B$ by a trivial Galois module, so by corollary \ref{transfer} we have that $CL(B)=CL(k)[2]$ up to 
discriminant negligible factors. In particular, $$CL(B)\leq |CL(k)|\leq D_k^{\frac12}.$$

The rest is easy: since $k\subset K$, we have $D_k^5\leq D_K$. Also, $D_{K_0}\gg_{\epsilon}D_K^{1-\epsilon}/D_F$ where $F$ is the maximal totally real subfield 
of $K$, so $D_F^2\leq D_K.$ Thus 
$$D_{K_0}\gg_{\epsilon} D_K^{-\epsilon}\cdot\frac{D_{K}^{\frac15}}{D_k^{\frac12}}\geq D_K^{\frac3{20}-\epsilon}> D_K^{\frac19}$$
as desired.

\end{proof}

\begin{lemma}\label{g6}

For $g=6$, the size of the image of $\widetilde{r_K}^*$ is at least $c_{\epsilon}D_K^{\frac1{12}-\epsilon}.$

\end{lemma}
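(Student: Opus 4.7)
The plan is to follow the two-case template of Lemmas \ref{g4} and \ref{g5}. Let $N$ denote the image of $\widehat{r_K}^\ast$ in $X(K_0)^\ast$, and set $B := X(K_0)^\ast/N$. As in the earlier lemmas, $N$ is generated over $\Z$ by the $G$-orbit of a set $W$ of vectors in $\{0,1\}^6 \subset X(K_0)^\ast$ which includes $v = (1,1,1,1,1,1)$. If $B$ is trivial, then the image torus agrees up to isogeny with $K_0$, and Theorem \ref{BST} combined with the estimate $D_{K_0} \gg_\epsilon D_K^{1/2-\epsilon}$ (which follows from $D_F^2 \leq D_K$ and Corollary \ref{discartin}) yields $|im(\widetilde{r_K})| \gg_\epsilon D_K^{1/4-\epsilon}$, far exceeding the target exponent $1/12$. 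I therefore assume henceforth that $B$ is a nontrivial finite $G$-module.

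To enumerate the possibilities for $B$, I would use the symmetry $w \leftrightarrow v - w$ to normalize every generator of $N$ drawn from $W$ to have at most three coordinates equal to $1$. A weight-one generator, combined with the transitive action of $G \subset W_6$ on $\{1,\dots,6\}$, would produce every $\pi_i^\ast$ in $N$ and hence force $B = 0$. So only weight-two and weight-three configurations contribute, and sweeping over the transitive subgroups of $W_6$ and the corresponding $G$-invariant collections of such vectors yields a finite list of candidate $B$'s. In each case the kernel of the action of $G$ on $B$ cuts out a proper subfield $k \subset L$, typically of small degree.

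For each candidate $B$, I would apply Corollary \ref{transfer} and Lemma \ref{triangleinequality} to show that $CL(B)$ is, up to discriminant negligible factors, a piece of the $n$-torsion of $Cl(k)$ for some small $n$, and then estimate it trivially by $|Cl(k)| \ll_\epsilon D_k^{1/2+\epsilon} \leq D_K^{[k:\Q]/24 + \epsilon}$. Since $X(T_N) \otimes \Q = X(K_0) \otimes \Q$, the rational Artin conductors of the image torus and of $K_0$ agree, so by Corollary \ref{discartin} their quasi-discriminants coincide up to discriminant negligible factors. Theorem \ref{BST} together with the transfer principle then yields
\[
|im(\widetilde{r_K})| \gg_\epsilon \frac{D_K^{1/4-\epsilon}}{CL(B)} \gg_\epsilon D_K^{1/4 - [k:\Q]/24 - \epsilon},
\]
which beats $D_K^{1/12}$ whenever $[k:\Q] \leq 4$.

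The main obstacle is the combinatorial case analysis: $W_6$ has substantially more transitive subgroups than $W_5$, so the enumeration of $G$-invariant collections of weight-two and weight-three vectors is longer and more delicate. The borderline case $[k:\Q] = 4$ is tight, since the trivial torsion bound then matches the required exponent with no slack, and handling it may require invoking Lemma \ref{quartictransfer} to trade $2$-torsion in the quartic field for $2$-torsion in its cubic resolvent, whose discriminant is sufficiently smaller to restore a margin. One must also verify directly that no configuration of $B$ forces a subfield $k$ of degree $\geq 5$, for which the trivial bound would be insufficient.
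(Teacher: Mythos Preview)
Your overall template---reduce to the case where $B = X(K_0)^*/N$ is nontrivial finite, then bound $CL(B)$ by the class number of a small subfield $k$---is exactly the paper's. Two substantive issues, however, separate your sketch from an actual proof.

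\textbf{The subfield need not lie in $K$.} You take $k$ to be the fixed field of the kernel of the $G$-action on $B$, and then invoke $D_k \leq D_K^{[k:\Q]/12}$. But that discriminant inequality requires $k \subset K$, i.e.\ that $H = \Gal(L/K)$ act trivially on $B$. The subgroup $H$ fixes $\pi_1^*$, but it can permute $\pi_2^*,\dots,\pi_6^*$ (with signs) arbitrarily, so in general $H$ does \emph{not} lie in that kernel, and your $k$ need not sit inside $K$. The paper instead takes $G_0$ to be the stabiliser of the single element $\pi_1^*$ in $B$; since $H$ fixes $\pi_1^*$, one has $H \subset G_0$ automatically, hence $k := L^{G_0} \subset K$, and the index $[G_0:H]$ controls $[k:\Q]$ from above. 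The bound $Cl(B) \leq CL(k)$ then follows because $B$ is (in each case that arises) generated by $\pi_1^*$ together with one further generator, so $B$ is a subquotient of $X(k^\times)^*/nX(k^\times)^*$ for suitable $n$.

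\textbf{The combinatorics is organised differently, and this is the real content.} Rather than enumerating the transitive subgroups of $W_6$, the paper first proves a short sublemma: assuming $B$ has torsion, the image $U$ must contain some vector $\pi_i^* \pm \pi_j^*$ (by playing two weight-three generators against each other). This means the full stabiliser $G^U$ of $U$ inside $W_6$ contains, besides $G$, at least one signed transposition. Passing to the image $G^0 \subset S_6$ and drawing the graph on $\{1,\dots,6\}$ whose edges are the transpositions in $G^0$, transitivity forces this graph to be a disjoint union of equal complete graphs. So there are only three cases: $K_6$, two triangles, or three disjoint edges. In the first two cases one reads off directly that $[G_0:H] \geq 6$ (resp.\ $\geq 3$), giving $[k:\Q] \leq 2$ (resp.\ $\leq 4$), and the trivial bound $CL(B) \leq |Cl(k)| \ll D_k^{1/2+\epsilon}$ yields the exponents $1/6$ and $1/12$ respectively. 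The three-edges case requires a further subargument which either produces a cyclic $B$ (back to the first case) or shows $B$ is torsion-free, a contradiction.

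Finally, your worry about the degree-$4$ case is unfounded: the exponent there is exactly $1/12$, which is what the lemma claims, so no appeal to Lemma~\ref{quartictransfer} is needed.
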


\begin{proof}

Let $B$ be the cokernel of $\widehat{r_K}^*$. As $X(L_0)^*$ is generated by the elements $1^*-g^*$, we see
that the image $U$ of $\widehat{r_K}^*$ is the span of some subset $W$ of the vectors $$\{\sum_{i=1}^6 a_i\pi_i^*\mid a_i\in\{0,1\}, 1\leq i\leq 6\}$$ which
includes the vector $v=(1,1,1,1,1,1)$. Given a vector $w=\sum_{i=1}^6 c_i\pi_i^*$, we refer to $c_i$ as the $i$'th entry of $w$. By replacing $w$ by $v-w$ we 
can assume all vectors in  $W$ with 1,2 or 3  entries being 1, and at least 3 
entries being 0. Recall that $U$ is invariant by $G$. Since $G$ acts transitively on the vectors $\pm\pi_i^*$, we deduce that the
order of the $\pi_i^*$ (possibly infinite) in $B$ is independent of $i$. If $B$ is torsion free then the lemma follows from lemma \ref{bigimage}, so we assume
from now on that this isn't the case.

\begin{lemma}\label{transpose}

The image of $\widehat{r_K}^*$ contains an element of the form $\pi_i^*\pm \pi_j^*$, for $j\neq i$.
\end{lemma}

\begin{proof}

If $W$ contains a vector with exactly 1 entry being $1$, then for some $i$ we have $\pi_i^*=0$ in $B$, and hence by the $G$-invariance of $U$ we see
that $B$ must be trivial, contradicting our assumption that $B$ is torsion free. If $W$ contains an element with exactly 2 entries being $1$'s, then we're done. 
Hence the only remaining case is where all the vectors in $W$ have exactly 3 entries being $1$'s. Let $v_1$ be 1 such vector. If $W$ consists only of $v_1$ and/or $v-v_1$, then it is easy to see that $B$ is 
isomorphic to $\Z^4$ and hence torsion free. So $W$ must contain at least 2 vectors $v_1,v_2$ with 3 non-zero entries and $v_1\notin\{v_2,v-v_1\}$. Then
either $v_1$ and $v_2$ or $v_1$ and $v-v_2$ have 2 of the same entries being $1$. Assume wlog $v_1$ and $v_2$ have 2 of the same entries being $1$. 
Then $v_1-v_2$ has 1 entry equal to 1, 1 entry equal to -1, and the rest equal to 0. This proves the lemma.
\end{proof}

By acting by $G$ on the vector produced by lemma \ref{transpose} we deduce that $U$ contains an element of the form $\pi_1^*\pm \pi_j^*$, and by relabeling we can assume
it contains $\pi_1^*\pm \pi_2^*$. Now, $W_6$ acts naturally on $X(L_0)^*
$ and $U$ is invariant by $G$. Since $U$ contains $\pi_1^*\pm \pi_2^*$, we see that $U$ is also invariant
either by $(1^+2^+)$ or $(1^-2^-)$. Let $G^U$ denote the subgroup of $W$ preserving $U$, so that $G^U$ contains $G$ and this signed transposition.
Let $G^0$ denote the image of $G^U$ in $S_6$, so that $G^0$ is a transitive subgroup of $S_6$ containing $(12)$. Draw a graph $V_U$ on $\{1,2,3,4,5,6\}$ where two elements
$i,j$ are connected by an edge if $(ij)\in G^0$. Since $(ij)$ and $(jk)$ generate $(ik)$, we see that this graph is a disjoint union of
complete graphs. Since $G^0$ acts on this graph transitively, the complete graphs must have the same size. 
We thus see that either $V_U$ is the complete graph, in which case $G^0$ is either all of $S_6$, or  $V_U$ is 2 triangles or 3 disjoint edges. We thus split into 3 cases:

\begin{itemize}

\item \textbf{Case i:$G^0\approx S_6.$}$\newline\newline$

The $G^U$ orbit of $\pi_1^*\pm \pi_2^*$ thus contains either $\pi_1^*+\pi_j^*$ or $\pi_1^*-\pi_j^*$ for each $j\neq 1$. Thus $B$ is cyclic and generated by $\pi_1^*$. Since we assumed that 
$B$ is torsion, we see that $U$ has full rank in $X(L_0)^*$. Now, each element of $G$ takes $\pi_1^*$ to $\pm \pi_j^*$ for some $j$. As $\pi_j^*=\pm \pi_1^*$ in $B$,
we see that the $G$ action on $B$ factors through $\Z/2\Z$. Taking the kernel of the action of $G$ on $B$, we see that there is a quadratic field $k\in K$
such that $Cl(B)\leq CL(k)$. But $CL(k)\ll_{\epsilon} D_k^{\frac12+\epsilon}\ll D_K^{\frac1{12}+\epsilon}$. Thus,
$$|im(\widetilde{r_K})|\gg_{\epsilon}\frac{D_K^{\frac12-\epsilon}}{D_F^{\frac12}D_k^{\frac12}}\gg D_K^{\frac16-\epsilon}.$$

\item \textbf{Case ii: $V_U$ consists of 2 triangles with vertices $\{1,2,3\}$ and $\{4,5,6\}$.}$\newline\newline$

In this case $\{\{1,2,3\},\{4,5,6\}$ must be a $G^0$-quotient set.

As in Case i, we have that in $B$, $$\pi_1^*=\pm \pi_2^*=\pm \pi_3^*, \pi_4^*=\pm\pi_5^*=\pm\pi_6^*.$$ Let $G_0$ denote the stabilizer of $\pi_1^*$
in $G$. Then $G_0$ contains $H$ as a subgroup of index at most $3$, so that there is a field $k\in K$ of degree at most $4$ over $\Q$ 
such that $Cl(B)\leq CL(k)$. But $CL(k)\ll_{\epsilon} D_k^{\frac12+\epsilon}\ll D_K^{\frac1{6}+\epsilon}$. Thus,
$$|im(\widetilde{r_K})|\gg_{\epsilon}\frac{D_K^{\frac12-\epsilon}}{D_F^{\frac12}D_k^{\frac12}}\gg D_K^{\frac1{12}-\epsilon}.$$

\item \textbf{Case iii: $V_U$ consists of 3 disjoint edges $\{1,2\},\{3,4\}$ and $\{5,6\}$.}$\newline\newline$

In this case $\{\{1,2\},\{3,4\},\{5,6\}\}$ must be a $G^0$-quotient set. Also, we have $$\pi_1^*=\pm\pi_2^*,\pi_3^*=\pm\pi_4^*, \pi_5^*=\pm\pi_6^*$$ in $B$.

Suppose that $\pi_1^*$ is 2-torsion in $B$. Then the stabilizer $G_0\subset G$ of $\pi_1^*$ in $B$ has $H$ as a subgroup of index at least $4$, and so
there is a subfield $k\subset K$ of degree at most $3$ such that $$Cl(B)\leq CL(k)\ll_{\epsilon} D_k^{\frac14+\epsilon}.$$ Thus
$$|im(\widetilde{r_K})|\gg_{\epsilon}\frac{D_K^{\frac12-\epsilon}}{D_F^{\frac12}D_k^{\frac12}}\gg D_K^{\frac18-\epsilon}.$$
Henceforth we assume that $\pi_i^*$ is not 2-torsion in $B$. 

 Consider a vector $w\in W$. If $w$ has $1$ non-zero entry then $B$ 
is trivial as before and we're done. If $w$ has 2 non-zero entries,
then they must both be $\{1,2\}$, $\{3,4\}$ or $\{5,6\}$ by the main assumption of this case. If $W$ consists only of some subset of 
$$\{\pi_1^*+\pi_2^*, \pi_3^*+\pi_4^*,\pi_5^*+\pi_6^*\}$$ then since $U$ is generated by $W$ and $v$ and $W\cup v$ can be completed to a basis of $X(L_0)^*$,
 it follows that $B$ is torsion-free. Hence we can assume $W$ contains a vector $w$ with 3 non-zero coefficients.

 Thus, $w\in W$ is of the form $w= \pi_i^*+\pi_j^*+\pi_k^*$. Suppose $i,j$ both belong to the same set $\{1,2\}$, $\{3,4\}$ or $\{5,6\}$. 
Wlog $i=1,j=2, k=3$. Then we see that either $B$ is trivial contrary to assumption, or $\pi_3^*=-2\pi_1^*$ in $B$.
Now, the action of $G^0$ on the 3 element set $\{\{1,2\},\{3,4\},\{5,6\}\}$  is transitive, so it must contain both 3-cycles. Acting on the relation
$\pi_3^*=-2\pi_1^*$  in $B$ by $G^U$ thus gives $\pi_5^* = \pm 2\pi_3^*$ in $B$ and $\pi_1^*=\pm 2\pi_5^*$, and so $\pi_1^*=\pm 8\pi_1^*$ in $B$. Hence
$B$ is cyclic, generated by $\pi_1^*$, and $\pi_1^*$ has order $3,7$ or $9$.  But $v-w = \pi_4^*+\pi_5^*+\pi_6^*$ is in $U$, which gives the relation
$\pi_4^*=-2\pi_5^*=\pm \pi_1^*$ in $B$, so that $\pi_1^*=\pm 2\pi_1^*$ and $\pi_1^*$ must have order 3 in  $B$. We can now proceed as in case i. 

Hence we can assume that
any $w$ with 3 non-zero coefficients must be of the form $w = \pi_i^*+\pi_j^*+\pi_k^*$ with each of $\{i,j,k\}$ belonging to a different element
of $\{\{1,2\},\{3,4\},\{5,6\}\}$. Recalling the relations $\pi_1^*=\pm\pi_2^*,\pi_3^*=\pm\pi_4^*$ and $\pi_5^*=\pm\pi_6^*$ in $B$, we see that each such $w$
imposes a relation in $B$ of the form $\pi_1^*\pm\pi_3^*\pm\pi_5^* = 0$. Any 2 distinct such relations imply that $2\pi_i^*=0$ in $B$ for some $i$, which we
assumed was not the case. Hence $B$ is isomorphic as an abelian group to the free module on $\pi_1^*,\pi_3^*,\pi_5^*$ modded out by a single relation
$\pi_1^*\pm\pi_3^*\pm\pi_5^* = 0$. Hence $B$ is torsion free, which is a contradiction.

\end{itemize}

\end{proof}

\end{proof}

 



\begin{thebibliography}{10}

\bibitem{BS}
A. Brumer and J.H.Silverman, The number of elliptic curves over $\Q$ with conductor $N$,
{\it Manuscripta Math.} 91 (1996), no. 1, pp. 95–102

\bibitem{CO}
C.Chai and F.Oort, abelian Varieties Isogenous to a Jacobian,
http://www.math.upenn.edu/$\sim$ chai/papers\_ pdf/isogJac65.pdf, preprint, 2010

\bibitem{Ed}
B.Edixhoven, On the Andre-Oort conjecture for Hilbert modular surfaces,
{\it Moduli of abelian varieties (Texel Island, 1999),Progr. Math.} 195, Birkhauser, Basel, 2001,  pp. 133-155,
 

\bibitem{EMO}
B.Edixhoven, B.Moonen and F.Oort, Open problems in algebraic geometry,
{\it Bull. Sci. Math.}, vol. 125. (2001), pp. 1-22

\bibitem{EV}
J.Ellenberg and A.Venkatesh, Reflection principles and bounds for class group torsion,
{\it Int. Math. Res. Not.} 2007, no. 1

\bibitem{G}
F.Gerth, Ranks of 3-Class Groups of non-Galois Cubic Fields,
{\it Acta Arithmetica}, XXX, 1976, pp. 307-322

\bibitem{KY}
B.Klingler and A.Yafaev, The Andr\'e-Oort conjecture, 
http://www.math.jussieu.fr/$\sim$ klingler/papiers/KY12.pdf, preprint, 2008

\bibitem{Na}
T.Nakayama, Cohomology of class  field theory and  tensor product modules, 
{\it Ann. of Math.},  65 (1957), pp. 255-267.

\bibitem{O}

T.Ono, On the Tamagawa Number of Algebraic Tori,
{\it Annals of Mathematics}, Vol. 78, No. 1, July, 1963,

\bibitem{O2}

T.Ono, Arithmetic of Algebraic Tori,
{\it Annals of Mathematics}, Vol. 74, No. 1, July, 1961,

\bibitem{P}

J.Pila, O-minimality and the Andr\'{e}-Oort conjecture for $\C^n$,
http://www.maths.ox.ac.uk/system/files/PilaAOMML4.pdf, to appear in {\it Annals Math.}

\bibitem{PR}

V.Platonov and A.Rapinchuk, Algebraic Groups and Number Theory,
{\it Academic Press}, 1993, ISBN0125581807

\bibitem{Sh}
G.Shimura, On abelian Varieties with Complex Multiplication,
{\it Princeton Mathematical Series 46}, Princeton University Press, 1998

\bibitem{S}

J.Shyr, On Some Class Number Relations of Algebraic Tori,
{\it Michigan Math Journal}, Vol. 24, Issue 3, 1977, pp. 365-377

\bibitem{UY}
E.Ullmo and A.Yafaev, Galois orbits and equidistribution of special subvarieties : towards the Andr\'{e}-Oort conjecture.
http://www.math.u-psud.fr/~ullmo/Prebublications/UllmoYafaev2(3)(2)(2).pdf

\bibitem{UY2}
E.Ullmo and A.Yafaev,Nombre de classes des tores de multiplication complexe et bornes inf\'erieures pour orbites
Galoisiennes de points sp\'eciaux. preprint.

\bibitem{Y}
A.Yafaev, A conjecture of Yves Andr\'{e}'s,
{\it Duke Mathematical Journal}, Vol.132, No.3, 2006, pp. 393-408

\bibitem{Z}
S.Zhang, Equidistribution of CM points on Quaternionic Shimura Varieties,
{\it International Mathematics Research Notices}, No.59, 2005, pp. 3657-3689



\end{thebibliography}
\end{document}